\newtheorem{theorem}{Theorem}
\newtheorem{corollary}[theorem]{Corollary}
\newtheorem{lemma}[theorem]{Lemma}
\newtheorem{definition}{Definition}[section]
\newtheorem{remark}{Remark}
\title{Set Turing Machines}
\author{Garvin Melles}
\begin{document}
\maketitle

\section{Introduction}

In this paper we define a notion of Turing computability for class functions, i.e., functions that operate on arbitrary sets. 
We generalize the notion of a Turing machine to the set Turing machine. 
Set Turing machines operate on a class size tape. We represent sets by placing marks in the cells of the set Turing machine tape. 
Instead of being indexed by $\mathbb{N}$ or $\mathbb{Z}$,
the tapes cells are indexed by finite sequences of ordinals.
For a marking of the cells to represent a set, the markings have the structure of a tree which mirrors the  transitive closure of the set.

Our conception depends on both the Axiom of Choice and the Axiom of Foundation.
Representations of sets as marks on the set Turing machine tape exist by the Axiom of Choice. The representations are well founded by the
Axiom of Foundation.

Using the concepts of the set Turing machines and the encoding of sets by marks on the set Turing machine tape, 
we  define the Turing computable class functions denoted $\rm{TUR}$.  
We also define the collection of recursive class functions, $\rm{REC}$, a generalization of the primitive recursive set functions
as defined in \cite{JK}. The class functions in ${\rm{REC}}$ are analogous to the 
recursive functions on $\mathbb N$. We will prove some elementary properties about 
${\rm{REC}}$. In the last section we prove our main theorem that $\rm{TUR}=\rm{REC}$.

\section{Set Turing Machines}

\subsection{Overview}

In this section we overview our definition without full formality for  a generalization of the Turing machine, the set Turing machine.
A set Turing machine  is an analog of the Turing machine
except instead of operating $n$ tuples of natural numbers it operates on $n$ tuples of sets.   Any $n$ tuple of well founded sets can be
represented by an $n$-tuple of well founded trees, each tree mirroring the transitive closure of a set. 
In our definition, we had the choice of whether to work with operands that are sets or to work with operands that
are $n$-tuples of sets. We find it convenient to work with the later. 

A standard Turing machine consists of a one way or two way infinite tape
consisting of infinitely many cells together with a finite set of instructions for controlling the stepping of  the machine. 
We can think of a one way tape to have elements of $\mathbb{N}$ for addressing the tape's cells  or a two way tape to have
elements of $\mathbb{Z}$ for addresses of its cells.  As a first step toward developing the set Turing machine, we consider a Turing
machine whose tape consists of cells indexed by elements of $^{<\omega}\omega$. Such a machine, along with a richer
set of machine head movements,  operates naturally on $n$-tuples from
$HC$, the hereditary finite sets.
We allow machine head movements
\begin{enumerate}
\item $z$ = move the head from cell $(n_0,\ldots,n_k)$ to cell $(n_0,\ldots,n_k,0)$, 
\item $u$ = move the head from cell $(n_0,\ldots,n_k)$ to cell
$(n_0,\ldots,n_{k-1})$, 
\item $+$ = move the head from cell $(n_0,\ldots,n_k)$ to cell $(n_0,\ldots,n_k+1)$, 
\item $u+$ = move the head from cell $(n_0,\ldots,,n_{k-1}, n_k)$ to cell $(n_0,\ldots,n_{k-1}+1)$,
\item $j^+$ (jump plus) = move the head from cell $(n_0,\ldots,n_k)$ to cell $(n_0+1,\ldots,n_k)$, and 
\item $j^-$ (jump minus) = move the head from cell $(n_0,\ldots,n_k)$  to cell $(n_0-1,\ldots,n_k)$. 
\end{enumerate}

One of the main technical points of this
paper is to show how such finite machines (along with a  class size address space) can be considered to operate on arbitrary sets. 
For infinite inputs, we have to consider how the machine operates through limit steps and how to set up the tape to encode
the set. For a set Turing machine to operate on arbitrary sets, the cells of the tape are indexed by 
tuples of the form $\nu=(n,\bar{\alpha})$ (also written as $n^\frown\bar{\alpha}$) where 
$n\in\omega$ and $\bar{\alpha}\in \,^{<\omega}ON$, the set of finite sequences of ordinals.  
In each cell a mark from a finite alphabet can be placed on or erased from the cell.
Given a well ordering of the transitive closure of a set $x$, we can create a marking $X$ of the set Turing machine tape that encodes
$x$.
Without loss of generality the marks, will be from the set $\{1, 2, 3, 4,\star, \star\star\}$ with a blank identified with $0$. 
We also allow the placing of a $^{\star}$ superscript on cells marked with $0,1,2,3,4$ as a kind of mark on a mark. These will be used
to guide the set Turing machine. 

We encode a $n$- tuple of sets $x_0,\ldots,x_{n-1}$ by marking some set of the cells with $1$'s to form a well founded tree. 
For $i\in n$, the markings that have cells with addresses of the form $i^{\frown}\bar{\alpha}$ will represent $i$-th component
of the tuple. Markings other than $1$ are used to guide set Turing machines in their operation. Only certain sets of markings are proper encodings of tuples of sets. While there are many ways to properly mark a set Turing machine tape to encode a given set, given such a marking, the
decode process is well defined, that is the $Decode(v)$ function can take a properly marked set Turing machine tape $X$ and
output the unique set $Decode(X)$ which is encoded by $X$. We say a class function $F:V^n\rightarrow V$ is Turing computable
if there is a set Turing machine $M$ which computes it, that is for every $n$-tuple, $x_0,\ldots,x_{n-1}$ and every choice of encoding
say  $X_0,\ldots, X_{n-1}$ of $x_0,\ldots,x_{n-1}$, the output of $M$  on input $X_0,\ldots, X_{n-1}$, denoted $M(X_0,\ldots, X_{n-1})$,
 is defined, well formed and 
$$
Decode(M(X_0,\ldots, X_{n-1})=F(x_0,\ldots,x_{n-1}).
$$

As with ordinary Turing machines one can define the configuration of a set Turing machine. 
The configurations of the set Turing machine consists of an
ordinal $\alpha\in ON$, representing the time step, the machine head position $\nu=n^{\frown} \bar{\alpha}$, 
which will be some finite sequence of ordinals whose first
component is in $\omega$,
a marking $X$ of the tape, which will
have some set of cells of the tape having non zero  marks, and a state $\ell\in\omega$ of the machine, which is just a finite index to a
row of the finite  set Turing machine table.  We denote a generic configuration $C_{\alpha}$ at time step $\alpha$ by a $4$-tuple
$$C_{\alpha}=(\alpha, \nu, X, \ell)$$
where $\nu\in \omega\times^{<\omega}ON$ and $X=(X_0,\ldots,X_{n-1})$ where $X_i$ is the subset of markings in $X$ whose
address begins with $i$.

A set Turing machine operates in a  completely deterministic manner, with changes in a configuration after
a single step defined by the set Turing machine table. The Turing machine head
moves to a neighboring cell, except for the jump instruction which takes it to the corresponding cell a neighboring component. 
For a limit ordinal $\beta$, we define the limit configuration of the machine at step $\beta$. The
limit configuration (if it exists) is completely
determined by the configurations of the machine at steps $\alpha<\beta$. We give more details on what limit configurations are
in the next paragraph.

The cells of the set Turing machine tape are well ordered via the canonical lexicographic
well ordering $\prec$ of $^{<\omega}ON$. We denote generic elements of $\omega\times ^{<\omega}ON$ by $\nu$, $\rho$, $\eta$. 
If $\nu$ is an initial segment of $\rho$, we write $\nu\vartriangleleft\rho$. Then also $\nu\prec \rho$. The state of the
machine at limit step $\beta$ will be the least state which occurs cofinally in the configurations leading up to the
$\beta$-th. Similarly, if $X_{\alpha}$ denotes the marked tape at time $\alpha$, then the markings  of the tape at time
$\beta$ is given by 
$$
X_{\beta}=\lim_{\alpha\rightarrow \beta}X_{\alpha}
$$
defined in the natural way. If this limit does not exist, then the set Turing machine fails to compute an output at time step $\beta$. 
Finally, we must consider the movement of the set Turing machine head. If the sequence of head positions up to time step 
$\beta$ has a limit, then the $\beta$-th head
position is this limit. Otherwise, it is the minimal in the sense of $\prec$ node which occurs cofinally among the head positions
of the set  Turing machine leading up to step $\beta$. 

In addition to these above considerations, we must define how to encode a set by a set of marks on the tape, what it means to
be a well formed tape, what it means for a set Turing machine to produce an output for a given input. 
In general, our computable class functions will be partial, i.e. class functions $F$
 such that
$$
{\rm dom} \,F\subseteq V^n
$$
for some $n\in \mathbb{N}$. Throughout the paper, we use $0$-based indexing for finite sequences.

\subsection{Set Turing Machines in More Formal Detail}

In this subsection we define the set Turing machine with  more formality and detail. 

\begin{definition}
For $\eta\in \,^{<\omega}ON$ let ${\rm len}(\eta)$ denote its length.
For $\eta,\nu\in\,^{<\omega}ON$, if $\eta$ is an initial segment of $\nu$, we write $\eta\vartriangleleft\nu$.
We define the lexicographic order $\prec_{lex}$ on $^{<\omega}ON$ by letting $\eta\prec_{lex}\nu$ if and only if $\eta\vartriangleleft\nu$
or if $i\in\omega$ is the first index
such that $\eta(i)\neq \nu(i)$ then $\eta(i)<\nu(i)$. We also denote $\prec_{lex}$ by $\prec$. 
\end{definition}

\begin{definition}
A set $X\subseteq\, ^{<\omega}ON$ is a basic code if 
\begin{enumerate}
\item If $\nu\in X$ and $\rho\vartriangleleft\nu$, then $\rho\in X$. In particular, $\langle\rangle$, the empty sequence is in $X$. 
\item If $\bar{\alpha}\in\,X$, {\rm then for no} $\alpha_1<\alpha_2$ {\rm is} $\bar{\alpha}^\frown \alpha_1\not\in X$, 
$\bar{\alpha}^\frown \alpha_2\in X$.
\end{enumerate}
\end{definition}

\begin{definition}
If $X$ is a basic code then $(X,\vartriangleleft)$ is a well ordered set. So we define a rank function on $X$
$$
\rho_{(X,\vartriangleleft)}:X\rightarrow ON
$$
in the usual way. 
\end{definition}

\begin{definition}
Let $X$ be a basic code. We define the decode function
$$
decode_X:X\rightarrow V
$$
for elements $x\in X$ by induction on $\rho_{(X,\vartriangleleft)}(x)$.
If $\rho_{(X,\vartriangleleft)}(x)=0$, then $decode_X(x)=\emptyset$.
Otherwise, 
$$
decode_{X}(x)=\{decode_{X}(y)\mid x\vartriangleleft y\ \wedge\ {\rm len}(y)={\rm len}(x)+1\}.
$$
\end{definition}

\begin{definition}
If $X$ is a basic code, then we define
$$
Decode(X)=decode_X(\langle\rangle).
$$
\end{definition}

\begin{definition}
If $X_1$ and $X_2$ are basic codes, then we say $X_1$ and $X_2$ are equivalent, $X_1\sim X_2$
if and only if 
$$
(X_1,\vartriangleleft)\cong (X_2,\vartriangleleft)
$$
\end{definition}

\begin{definition}
If $x$ is a set and $X$ is a basic code such that
$$
(trcl(\{x\}),\in)\cong (X,\vartriangleleft)
$$
then we say that $X$ is a basic code for $x$. 
\end{definition}

\begin{lemma}
If $X$ is a basic code, then there is a unique set $x$ such that $X$ is a basic code for $x$. 
Furthermore, if $X_1$ and $X_2$ are two basic codes, then there is a $x$ which is a basic
code for both $X_1$ and $X_2$ if and only if $X_1\sim X_2$.
\end{lemma}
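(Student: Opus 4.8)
The plan is to show that $x := Decode(X) = decode_X(\langle\rangle)$ is the unique set for which $X$ is a basic code, and then to read the final claim off of this.

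\emph{Existence.} I would first check that $decode_X[X] = trcl(\{x\})$. For the inclusion $decode_X[X]\subseteq trcl(\{x\})$, follow the (finite) $\vartriangleleft$-chain from $\langle\rangle$ up to a node $\nu$ and apply the recursive clause defining $decode_X$ at each step: this shows $decode_X(\nu)\in trcl(\{x\})$. For the reverse inclusion, observe that $decode_X[X]$ is transitive — if $z\in decode_X(\nu)$ then by the recursive clause $z=decode_X(\rho)$ for some immediate successor $\rho$ of $\nu$ in the tree, so $z\in decode_X[X]$ — and it contains $x$, hence it contains $trcl(\{x\})$. The same recursive clause shows that $decode_X$ preserves and reflects the immediate-successor relation. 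What remains, and what I expect to be the main obstacle, is to prove $decode_X$ \emph{injective}: one must rule out two distinct nodes of $X$ decoding to the same set, and this is exactly the point where clauses (1) and (2) in the definition of basic code, together with the Axiom of Foundation (which guarantees the relevant structures are well founded and rigid), do their work. Granting injectivity, $decode_X$ is an order isomorphism of $(X,\vartriangleleft)$ onto $(trcl(\{x\}),\in)$ — equivalently, $decode_X$ is the Mostowski collapse of $X$ equipped with its immediate-successor relation — so $X$ is a basic code for $x$.

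\emph{Uniqueness.} Suppose $X$ is a basic code for both $x$ and $x'$. Then $(trcl(\{x\}),\in)\cong(X,\vartriangleleft)\cong(trcl(\{x'\}),\in)$. By Foundation, $\in$ is well founded and extensional on each of the transitive sets $trcl(\{x\})$ and $trcl(\{x'\})$, so by the rigidity of transitive $\in$-structures (uniqueness in Mostowski's collapsing lemma) the composite isomorphism is the identity and $trcl(\{x\})=trcl(\{x'\})$. This common transitive set has a unique $\in$-maximal element (again by Foundation, which rules out membership cycles), and it equals both $x$ and $x'$; hence $x=x'$.

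\emph{The equivalence.} The relation $\sim$ is symmetric and transitive because isomorphism of structures is. If $X_1$ and $X_2$ are basic codes for one and the same set $x$, then $(X_1,\vartriangleleft)\cong(trcl(\{x\}),\in)\cong(X_2,\vartriangleleft)$, so $X_1\sim X_2$. Conversely, if $X_1\sim X_2$, let $x_1$ be the set for which $X_1$ is a basic code (unique by the first two parts); then $(trcl(\{x_1\}),\in)\cong(X_1,\vartriangleleft)\cong(X_2,\vartriangleleft)$, so $X_2$ is also a basic code for $x_1$, and therefore $X_1$ and $X_2$ are basic codes for the common set $x_1$, which by uniqueness is the only set either of them codes.
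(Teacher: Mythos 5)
The paper states this lemma without any proof, so there is nothing of the author's to compare against; I will assess your argument on its own terms. Your uniqueness and equivalence paragraphs are correct: transitive sets are extensional, so Mostowski rigidity forces $trcl(\{x\})=trcl(\{x'\})$, and $x$ is recoverable as the unique $\in$-maximal element of that set. The problem is the existence paragraph, and precisely the step you yourself flag: the injectivity of $decode_X$. This is not a gap that clauses (1), (2) and Foundation can close, because the claim is false. Take $X=\{\langle\rangle,\langle 0\rangle,\langle 1\rangle,\langle 1,0\rangle\}$. This is a basic code (initial-segment closed, and the children of each node form an initial segment of the ordinals), yet $decode_X(\langle 0\rangle)=decode_X(\langle 1,0\rangle)=\emptyset$, so $decode_X$ is not injective; indeed $|X|=4$ while $Decode(X)=\{\emptyset,\{\emptyset\}\}$ has transitive closure of size $3$, so no bijection between $X$ and $trcl(\{Decode(X)\})$ exists at all. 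Read literally against the paper's definition of ``basic code for $x$'' (an isomorphism $(trcl(\{x\}),\in)\cong(X,\vartriangleleft)$), the existence half of the lemma is therefore false: a node of the tree corresponds to an \emph{occurrence} of a set in the membership tree of $x$, and the same set generally occurs at many nodes, whereas it appears only once in $trcl(\{x\})$. (A further, independent issue: the definition of basic code does not exclude infinite branches, e.g.\ $X={}^{<\omega}\{0\}$, in which case $\rho_{(X,\vartriangleleft)}$, and hence $decode_X$, is undefined.)

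The salvageable statement --- and almost certainly the intended one --- is that $x:=Decode(X)$ is the unique set coded by $X$, where ``coded'' is understood via the $Decode$ function rather than via a literal isomorphism with $(trcl(\{x\}),\in)$. Under that reading your machinery already gives a clean proof with no injectivity needed: $decode_X$ is a well-defined (single-valued) function by recursion on $\rho_{(X,\vartriangleleft)}$, so $Decode(X)$ exists and is unique outright, and your computation that $decode_X[X]=trcl(\{x\})$ and that $decode_X$ carries the immediate-successor relation onto membership exhibits it as a surjective collapse of the tree onto the membership structure. If instead the paper's definition is to be kept, it must be repaired so that the isomorphism is with the unfolded membership tree of $x$ rather than with $trcl(\{x\})$ itself. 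Either way, do not attempt to prove injectivity of $decode_X$; identify where it fails and route around it.
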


\begin{lemma}
Let $X\subseteq \,^{<\omega}ON$ be a basic code for $x$ via the isomorphism  $f$, i.e.,
$$
(trcl(\{x\}),\in)\cong_f (X,\vartriangleleft).
$$
Then the rank function $\rho_{(X,\vartriangleleft)}$ on $X$ preserves rank, that is for all $y\in trcl(\{x\})$,
$$
{\rm rank}(y)=\rho_{(X,\vartriangleleft)}(f(y))
$$
where ${\rm rank}$ refers to set theoretic rank.
\end{lemma}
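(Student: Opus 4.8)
The plan is to prove the identity by transfinite induction on the set-theoretic rank of $y$ --- equivalently, by $\in$-induction along the (well-founded, by Foundation) membership relation on $trcl(\{x\})$ --- after first pinning down the recursion satisfied by $\rho_{(X,\vartriangleleft)}$ and the combinatorial meaning of $f$. First I would note that, since $decode_X$ is defined by induction on $\rho_{(X,\vartriangleleft)}$ using the immediate successors of a node, the rank function must strictly decrease when passing from a node to an immediate extension; hence it is the ``height from the leaves'' rank of the well-founded structure $(X,\vartriangleleft)$, i.e.
$$
\rho_{(X,\vartriangleleft)}(\nu)=\sup\{\rho_{(X,\vartriangleleft)}(\mu)+1 : \mu\in X,\ \nu\vartriangleleft\mu,\ {\rm len}(\mu)={\rm len}(\nu)+1\}
$$
with the convention $\sup\emptyset=0$. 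A one-line check shows that allowing arbitrary proper extensions of $\nu$ rather than only immediate ones leaves this supremum unchanged, since any longer extension has a $\vartriangleleft$-predecessor among the immediate successors of $\nu$ that dominates its rank.

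Next I would unwind what ``$X$ is a basic code for $x$ via $f$'' provides: $f$ is a bijection $trcl(\{x\})\to X$ transporting $\in$ to the edge relation of the tree $X$, so that for $a,b\in trcl(\{x\})$ one has $b\in a$ iff $f(b)$ is an immediate extension of $f(a)$ in $X$ --- this is precisely the relation matched to membership by the clause defining $decode_X$. Because $trcl(\{x\})$ is transitive, $b\in a$ implies $b\in trcl(\{x\})$, so $f$ restricts to a bijection between the members of $a$ and the immediate successors of $f(a)$ in $X$. Also, an infinite $\vartriangleleft$-branch in $X$ would pull back under $f^{-1}$ to an infinite $\in$-descending chain in $trcl(\{x\})$, so Foundation guarantees no such branch and $\rho_{(X,\vartriangleleft)}$ is genuinely defined.

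Now the induction. Base case $y=\emptyset$: then $y$ has no $\in$-predecessors at all, so $f(y)$ has no immediate successor in $X$, and $\rho_{(X,\vartriangleleft)}(f(y))=\sup\emptyset=0={\rm rank}(y)$. Inductive step: assume the claim for every $z\in y$. The displayed recursion for $\rho_{(X,\vartriangleleft)}$ applied at $f(y)$, the bijection between the members of $y$ and the immediate successors of $f(y)$, and the induction hypothesis give
$$
\rho_{(X,\vartriangleleft)}(f(y))=\sup\{\rho_{(X,\vartriangleleft)}(f(z))+1 : z\in y\}=\sup\{{\rm rank}(z)+1 : z\in y\}={\rm rank}(y),
$$
completing the induction. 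Abstractly, this is just the standard fact that an isomorphism of well-founded relations carries canonical rank to canonical rank, instantiated for $\in$ on $trcl(\{x\})$ and the reverse-edge relation on $X$.

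I do not anticipate a real obstacle: the content is a routine transfinite induction. The only point requiring care is the reconciliation of definitions --- reading $\rho_{(X,\vartriangleleft)}$ as the height function forced by the $decode_X$ recursion (not the naive length function), and reading the isomorphism $f$ as transporting $\in$ to the immediate-extension relation of $X$. Once those conventions are fixed, each step above is immediate.
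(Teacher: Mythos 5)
The paper states this lemma without any proof, so there is no argument of the author's to compare against; your write-up supplies the missing content, and it is correct. Your two preliminary observations are exactly the points that need to be made explicit: the recursion defining $decode_X$ forces $\rho_{(X,\vartriangleleft)}$ to be the height-from-the-leaves rank (a node's rank is the supremum of the successor ranks of its extensions), and the supremum over immediate extensions coincides with the supremum over all proper extensions, after which the transfinite $\in$-induction is routine. One caveat, which is a defect of the paper's definitions rather than of your argument: a bijection $f$ with the property you extract (members of $a$ correspond exactly to immediate extensions of $f(a)$) cannot exist in general, because a set lying in two distinct elements of $trcl(\{x\})$ would have to be sent to an immediate extension of two incomparable nodes --- already for $x=\{\emptyset,\{\emptyset\}\}$ the intended tree has four nodes while $trcl(\{x\})$ has three elements. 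The robust formulation of the lemma is node-indexed: for every $\nu\in X$, $\rho_{(X,\vartriangleleft)}(\nu)={\rm rank}(decode_X(\nu))$. Your induction transcribes verbatim to that statement (inducting on $\rho_{(X,\vartriangleleft)}(\nu)$ instead of on $y$, and using that the immediate extensions of $\nu$ decode onto the members of $decode_X(\nu)$), and that is the version actually used later in the paper; it would be worth recording it in that form.
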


\begin{definition}
If $x=(x_0,\ldots,x_{n-1})$ is a finite sequence of sets, then $X=(X_0,\ldots,X_{n-1})$ is a code for $x$ if 
for each $i$ such that  $i\in n$,
$$
X_i=\{\bar{\alpha}\mid i^{\frown}\bar{\alpha}\in X\}
$$
is a basic code for $x_i$. Furthermore, we define the Decode of $X=(X_0,\ldots,X_{n-1})$ via
$$
Decode(X)=(Decode(X_0),\ldots,Decode(X_{n-1})).
$$
\end{definition}

\noindent We will formally identify a blank cell with a cell marked with $0$. We have

\begin{definition}
The set Turing machine tape is the class $T=\mathbb{N}\times\, ^{<\omega}ON$. A set Turing machine marking is a class function
$X$ with domain $T$,
$$
X:T\rightarrow \{0,1,2,3,4,0^{\star},1^{\star},2^{\star},3^{\star},4^{\star},\star,\star\star\}
$$ 
such that the inverse image of $\{1,2,3,4,0^{\star},1^{\star},2^{\star},3^{\star},4^{\star},\star,\star\star\}$, 
$${X}^{-1}(\{1,2,3,4,0^{\star},1^{\star},2^{\star},3^{\star},4^{\star},\star,\star\star\})$$
is a set. A marking $X$ is well formed if ${X}^{-1}(\{1,2,3,4,0^{\star},1^{\star},2^{\star},3^{\star},4^{\star},\star,\star\star\})$
is a code.
\end{definition}

\noindent If $X$ is a well formed tape marking such that the range of $X$ is $\{0,1\}$, then we identify $X$ with $X^{-1}(1)$, the
code for a tuple of sets. We will also call $X$ a Turing name for the set it encodes, namely, $Decode(X)$. 

\begin{definition}
Let $X$ be a marking. For  $\eta\in T$, we define the induced marking $X_{\eta}$ by letting
$X_{\eta}(\bar{\alpha})=X(\eta^{\frown}\bar{\alpha})$ for finite sequences of ordinals $\bar{\alpha}$,
and let $X_{\eta}(\langle\rangle)=X(\eta)$.
\end{definition}

\begin{definition}
If $X$ is a well formed marking then the number of components, $$\#components(X)$$ is the smallest $n\in\omega$ such that
if $k\in\omega$ and $(k,\bar{\alpha})\in T$ with $X(k,\bar{\alpha})\neq 0$, then $k<n$. For $k<\#components(X)$,
by the $k$-th component $X_k$  we mean $\{\bar{\alpha}\in\, ^{<\omega}ON\mid X(k,\bar{\alpha})\neq 0\}$. 
\end{definition}

\begin{definition}
A root of the Turing machine tape
is a cell whose index is a natural number.
\end{definition}

The notion of rank extends naturally to well formed markings. 

\begin{definition}
If $X$ has components $X_0,\ldots,X_{n-1}$ then we define the
rank of $X$ as the sup of the ranks of each component. For a component $X_i=Y$, its rank
is the rank of the basic code it is identified with.
\end{definition}

\begin{definition}
A set Turing machine move $mv$ is an element of 
$$\{s ,z, u, +, u+, j+, j-\}$$
with the following meanings.
\begin{enumerate}
\item $s$ (for same) denoting no change in cell address, 
\item $z$ (for zero) adjoin zero to the cell address, 
\item $u$ (for up) move up the tree by deleting the last component of the
cell address, 
\item $+$ add one to the last component of the cell address, 
\item $u+$ move up the tree and then move over to the next cell address.
\item $j+$  jump to the next component, 
\item $j-$ negative jump to the previous component respectively. 
\end{enumerate}
\end{definition}

\begin{definition}
Let $\{0,1,2,3,4,0^{\star},1^{\star},2^{\star},3^{\star},4^{\star},\star, \star\star\}$ be the set of marks one can place in a set Turing machine cell.
A set Turing machine tuple is a $5$-tuple of the form
$$
(\ell,k,k^{\prime},mv,\ell^{\prime})
$$
where 
\begin{enumerate}
\item $\ell,\ell^{\prime}\in\omega$ are line numbers (also called states) together with the halt state denoted by $H$, 
\item $k, k^{\prime}$ are marks, i.e., elements of $\{0,1,2,3,4,1^{\star},2^{\star},3^{\star},4^{\star},\star,\star\star\}$,
\item $mv$ is a set Turing machine move. 
\end{enumerate}
A set Turing machine $M$ is a finite set of set Turing machine tuples.
\end{definition}

\noindent In the above definition, we think of the first two components of a set Turing machine tuple as an index to a command
consisting of the last three components of the tuple. 


\begin{definition}
If $\beta$ is a limit ordinal and $\langle X_{\alpha}\mid \alpha<\beta\rangle$ is a transfinite sequence of 
set Turing machine tape markings, then by
$$X_{\beta}=\lim_{\alpha\rightarrow\beta} X_{\alpha}$$
we mean the pointwise limit, i.e., for each $\eta$ in $T$,
$$X_{\beta}(\eta)=\lim_{\alpha\rightarrow\beta} X_{\alpha}(\eta)$$
if this limit exists for every $\eta$. Otherwise the limit $X_{\beta}$ is not defined. 
\end{definition}

\begin{definition}
If $\beta$ is a limit ordinal and $\langle \eta_{\alpha}\mid \alpha < \beta\rangle$ is a sequence of elements from
$^{<\omega}ON$, then we say $\eta\in\, ^{<\omega}ON$ is weak upper bound for $\langle \eta_{\alpha}\mid \alpha < \beta\rangle$
if for every $\alpha<\beta$ there is a $\gamma$ such that $\alpha\leq \gamma<\beta$ and $\eta_{\gamma}\leq \eta$.
We define
$$
\underline{\lim}_{\,\alpha\rightarrow\beta}\,\eta_{\alpha}
$$
as the least $\eta$ such that $\eta$ is a weak upper bound for $\langle \eta_{\alpha}\mid \alpha < \beta\rangle$.
\end{definition}

\begin{definition}\label{configurations}
Let $X$ be a well formed marking and $M$ a set Turing machine. We define an ordinal $\alpha^*$ and a transfinite sequence
of markings 
$$\langle X_{\alpha}\mid \alpha\leq \alpha^*\rangle
$$ 
and configurations
$$
\langle (\alpha,\nu_{\alpha},X_{\alpha},\ell_{\alpha}) \mid \alpha\leq \alpha^*\rangle
$$
by induction on $\alpha$. 
We let 
$$
(0,\nu_0,X_0,\ell_0)=(0,0,X,0).
$$
If $\alpha=\gamma+1$, then if $X_{\gamma}(\nu_{\gamma})=k$ and the $(\ell_{\gamma},k)$-th entry of $M$
is 
$$
(k^{\prime},mv,\ell^{\prime})
$$
then $X_{\alpha}(\nu_{\gamma})=k^{\prime}$, $\ell_{\alpha}=\ell^{\prime}$, and
if $\nu_{\gamma}=(n_0,\alpha_1,\ldots,\alpha_s)$ then
\begin{enumerate}
\item If $mv=z$, then $\nu_{\gamma}=(n_0,\alpha_1,\ldots,\alpha_s,0)$.
\item If $mv=u$, then $\nu_{\gamma}=(n_0,\alpha_1,\ldots,\alpha_{s-1})$.
\item If $mv=+$, then $\nu_{\gamma}=(n_0,\alpha_1,\ldots,\alpha_s+1)$.
\item If $mv=u+$, then $\nu_{\gamma}=(n_0,\alpha_1,\ldots,\alpha_{s-1}+1)$.
\item If $mv=j+$, then $\nu_{\gamma}=(n_0+1,\alpha_1,\ldots,\alpha_s)$.
\item If $mv=j-$, then $\nu_{\gamma}=(n_0-1,\alpha_1,\ldots,\alpha_s)$  if defined, otherwise the machine halts.
\end{enumerate}
If $\alpha$ is a limit ordinal, then if 
$$\lim_{\gamma\rightarrow\alpha}X_{\gamma}$$ is defined then
the $\alpha$-th configuration is
$$
(\alpha,\nu_{\alpha},X_{\alpha},\ell_{\alpha})
$$
where
\begin{enumerate}
\item $\nu_{\alpha}=\underline{\lim}_{\,\gamma\rightarrow\alpha}\,\nu_{\gamma}$.
\item $X_{\alpha}=\lim_{\gamma\rightarrow\alpha}X_{\gamma}$.
\item $\ell_{\alpha}$ is the least state cofinal in $\langle \ell_{\gamma}\mid \gamma<\alpha\rangle$. 
\end{enumerate}
Otherwise the $\alpha$-th configuration is not defined.
We define $\alpha^*$ as the first $\alpha$ for which the above sequence is no longer defined or for which
$\ell_{\alpha}$ is the halt state. 
If the sequence of $X_{\alpha}$'s is unbounded in $ON$, we denote $\alpha^*=\infty$. 
We define $M(X)=Y$ if 
\begin{enumerate}
\item $X=X_0$.
\item $Y=X_{\alpha^*}$.
\item $Y$ is well formed.
\item $\ell_{\alpha*}=halt$.
\item $\nu_{\alpha^*}=0=0^{\frown}\langle\rangle$.
\end{enumerate}
In other words we require the machine to be in the halt state and the head position to be at the $0$-th root. 
$M(X)$ is undefined if $\alpha^*=\infty$. 
\end{definition}

\noindent We will be concerned with the operation of set Turing machines on well formed markings. The well formedness condition 
informs the set Turing machine as to when it is hitting a boundary of the representation of the set as marks on the tape.

\subsection{Examples of Set Turing Machines}

In this subsection, we give some illustrative examples of set Turing machines. Working through the examples will give the reader an intuition for how
set Turing machines operate. Our set Turing machines will be designed to work on well formed markings $X$. We define the end marking of
a marking $X$ as the first $\alpha$ such that $X(0^{\frown}\alpha)=0$. We define the machine $M_{end}$ which puts
the $\star\star$ mark on the end marking.
The machine
$M_{end}$ can be defined as he set Turing machine which  has the set of commands given in Table 1. Given a well formed marking 
$$
X=(X_0,\ldots,X_{n-1})
$$
which as all nonzero marks of $1$ and which has $n$-components, 
$M_{end}$ starts with the head position at the root of the first component $X_0$. 
It places a mark of $\star$ at the root. The $\star$ is used to mark the root as the starting  position so the machine will
know to halt when it returns to the root. It then proceeds through in $\prec_{lex}$ order all the nodes with cell addresses of the
form $0^{\frown}\alpha$ leaving cells marked with a $1$ unchanged. When it comes to the first $0$ in a cell it replaces the $0$ with the
$\star\star$ mark moves to the root and then halts. We use the $H$ symbol in the table for the $halt$ state.

\begin{table}[htbp]
   \centering
   \begin{tabular}{ccc} 
      \toprule
      \multicolumn{2}{r}{Mark} \\
      \cmidrule(r){2-3} 
      State   & $0$ & $1$ \\
      \midrule
      $\ell_0$     &   & $(\star, z, \ell_1)$ \\
      $\ell_1$     & $(\star\star, u, H)$ & $(1, +, \ell_1)$ \\
      \bottomrule
   \end{tabular}
   \bigskip
   \caption{The set Turing machine $M_{{end}}$.}
   \label{tab:booktabs}
\end{table}

\FloatBarrier


\noindent It is instructive to examine the behavior of $M_{end}$ at limit steps.  At step $n\in\omega$ the
machine head will be at cell $0^{\frown}n-1$. At time steps $\alpha$ such that $\omega\leq \alpha<\alpha^*$, it will be
at cell $0^{\frown}\alpha$, using the $\underline{\lim}$ operator at limit steps. 

$M_{end}$ has the function of delimiting the beginning and ends of the first component, and
it can be easily modified to do so for any particular component using the jump instruction. In what follows we will
usually operate on tapes with their nonzero marks delimited in this fashion, and may omit mentioning this assumption.

The next set Turing machine example is $M_{erase}$. Given a marking $X=(X_0,\ldots,X_{n-1})$ whose only 
nonzero markings are $1$'s we preprocess $X$ with $M_{end}$ to delimit the first component with the
start $\star$ and end marks $\star\star$. Let
$$
\Lambda_{X_0}=\left\{ \eta=(\eta_0,\ldots,\eta_{n-1})\mid X(0^{\frown}\eta)\neq 0\ \vee\ (X(0^{\frown}\eta)= 0\ \wedge\ X(0^{\frown}\eta\restriction (n-1)\neq 0)\right\}.
$$
In other words $\Lambda_{X_0}$ consists sequences from $^{<\omega}ON$ with associated marks all $1$'s or associated 
marks all $1$'s with a trailing $0$. 
$M_{erase}$ after the preprocessing of $X$ 
goes through the elements of the form $0^{\frown}\eta$ for $\eta\in \Lambda_{X_0}$ in the order of the
well ordering of $\Lambda_{X_0}$ induced by $\prec$. At time step $n<\omega$ the machine head will be
at the $n-1$-th element according to this order, and at time steps $\alpha\geq \omega$ the machine head will be at the
$\alpha$-th element as induced by $\prec$. The $\underline{\lim}$ operator on the head position in this instance reduces to
the $\lim$ operator as the sequence induced by $\prec$ on $\Lambda_{X_0}$ is strictly increasing. 
During this process all cells marked with a $1$ are now marked with a $0$.
The end mark $\star\star$ is also turned into a $0$. In other words, this set Turing machine erases the first component of $X$,
$X_0$, and turns all the $1$ marks to $0$. Once it
hits the $\star\star$ mark, it also turns it into a $0$, moves to the start and halts. Note that we can again using the jump operator modify $M_{erase}$ so that it erases
as many of the components of $X$ as we like. 

\begin{table}[htbp]
   \centering
   \begin{tabular}{cccc} 
      \toprule
      \multicolumn{3}{r}{Mark} \\
      \cmidrule(r){2-4} 
      State   & $0$ & $1$ & $\star\star$ \\
      \midrule
      $\ell_0$     &   & $(\star, z, \ell_1)$ & \\
      $\ell_1$     & $(0, u+, \ell_1)$ & $(0, z, \ell_1)$ & (0,u,H)\\
      \bottomrule
   \end{tabular}
   \bigskip
   \caption{The set Turing machine $M_{erase}$.}
   \label{tab:booktabs}
\end{table}

\FloatBarrier

A related set Turing machine we give as an example is $M_{traverse-2}$. This machine acts much like 
$M_{erase}$ except as it traverses $\Lambda_{X_0}$ it replaces the $1$ marks by $2$s in the order induced by $\prec$ on
$\Lambda_{X_0}$.
 
\begin{table}[htbp]
   \centering
   \begin{tabular}{cccc} 
      \toprule
      \multicolumn{3}{r}{Mark} \\
      \cmidrule(r){2-4} 
      State   & $0$ & $1$ & $\star\star$ \\
      \midrule
      $\ell_0$     &   & $(\star, z, \ell_1)$ & \\
      $\ell_1$     & $(0, u+, \ell_1)$ & $(2, z, \ell_1)$ & (0,u,H)\\
      \bottomrule
   \end{tabular}
   \bigskip
   \caption{The set Turing machine $M_{traverse-2}$.}
   \label{tab:booktabs}
\end{table}

\FloatBarrier

Our last example is of $M_{copy}$ which has the following table. 

\begin{table}[htbp]
   \centering
   \begin{tabular}{cccc} 
      \toprule
      \multicolumn{3}{r}{Mark} \\
      \cmidrule(r){2-4} 
      State   & $0$ & $1$ & $\star\star$ \\
      \midrule
      $\ell_0$     &               & $(\star, z, 1)$ & \\
      $\ell_1$     & $(0, u+, \ell_1)$ & $(1, s, \ell_2)$          & (0, u, H)\\
      $\ell_2$     &               &   $(1, j^{+}, \ell_3)$  & \\
      $\ell_3$     &   $(1, j^{-}, \ell_4)$          &                       &\\
      $\ell_4$     &               &     $(1, z, \ell_1) $                 & \\
    \bottomrule
   \end{tabular}
   \bigskip
   \caption{The set Turing machine $M_{{copy}}$.}
   \label{tab:booktabs}
\end{table}

\FloatBarrier

The machine on input a preprocessed marking $X=(X_0,X_1)$ of $2$-components with $X_1$ representing the empty set and
$X_0$ which encodes a set, copies the marking onto the second
component. It does so by moving through $\Lambda_{X_0}$ in the $\prec_{lex}$ order and as it does so if the mark is a $1$
jumps to the corresponding node of the second component and placing a $1$ there, before jumping back and
continuing moving through $\Lambda_{X_0}$ with the $\prec_{lex}$ order. 
As in the case of $M_{erase}$, at limit steps, it arrives in state $1$ as that is the least cofinal state and continues on
as desired.  Note that the copy and erase machines are easily generalized to work on any desired component. 

\subsection{Constructions for Building Set Turing Machines}

\begin{definition}
A marking $X$ with a unique mark at $\eta$ is a marking $X$ such that 
$$X(\eta)\in \{0^{\star}, 1^{\star}, 2^{\star}, 3^{\star}, 4^{\star}\}$$
and all other marks in the component
of $\eta$ have marks from $0,1,2,3,4,\star,\star\star$. 
\end{definition}

\begin{definition}
We say the set Turing machine $M_B$ computes a Boolean if for every well formed marking $X$, $M_B(X)$ is defined and
the output of $M_B$ on input $X$ is the marking with one component whose only nonzero mark is at the root
and this mark is $1$, or if the only other mark is a $1$ in the $0^\frown 0$-th cell of the tape. In the first case we
say $M_B(X)=0$, in the second we say $M_B(X)=1$. 
\end{definition}

\begin{definition}
We say the set Turing machine $M$ preserves the number of components if for every well formed marking $X$ and $Y$ such
that $M(X)=Y$, 
$$
\#components(X)=\#components(Y).
$$
\end{definition}

\begin{definition}
Let $M$ and $M_B$ be  set Turing machines with $M_B$ computing a Boolean and $M$ preserving the number of components.
From any given marking $X$ and the sequence of markings $\langle X_{\alpha}\mid \alpha\leq \alpha^*\}$ determined from $X$ and $M$,
we construct the subsequence
determined by $M$ and $M_B$,
$$
\langle X_{\alpha}\mid \alpha\leq \beta^\},
$$
by letting $\beta^*$ be the first $\alpha\leq \alpha^*$ such that $M_B(X_{\alpha})=0$ or $\beta^*=\alpha^*$ if there is no such $\alpha$. 
\end{definition}

We will repeatedly use the following lemmas to justify our claims that certain set Turing machines with a specified desired functionality exist,
most especially the While Loop Lemma. 

\begin{lemma}(If Then Else Lemma)
If $M_1, M_2$ and $M_B$ are set Turing machines, with $M_1$ and $M_2$ preserving the number of components, and $M_B$ computing a Boolean, then there is a set Turing machine $N$ such that
for any well formed marking $X$,
\[
N(X) = 
\begin{cases}
M_1(X) & \text{if $M_B(X)=0$;}\\
M_2(X) & \text{if $M_B(X)\neq 0$.}\\
\end{cases}
\]
\end{lemma}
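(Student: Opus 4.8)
The plan is to build $N$ as a composition of three phases: a ``compute the test'' phase, a ``remember the answer'' phase, and a ``run the right branch'' phase. The main subtlety is that $M_B$ is run on a scratch copy of the input, because $M_B$ may destroy or overwrite the marking while computing its Boolean, and we still need the original $X$ intact to feed to $M_1$ or $M_2$.

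\begin{proof}[Proof sketch]
Let $X$ be a well formed marking with $n = \#components(X)$ components. First I would construct a set Turing machine $N_0$ that, using a generalization of the $M_{copy}$ machine from the examples, shifts all components of $X$ up by one index (so component $i$ becomes component $i+1$) and installs a fresh copy of $X_0,\dots,X_{n-1}$ in components $0,\dots,n-1$; more simply, since $M_B$ only needs \emph{some} well formed marking to produce its Boolean and the Boolean of the construction is determined by $X$, it suffices to copy the whole tuple $X$ into a block of $n$ fresh components sitting after the original ones. Then I run a copy $M_B'$ of $M_B$ that operates on this scratch block rather than on components $0,\dots,n-1$ (this is the ``work on any desired component'' remark applied to all the components $M_B$ touches, together with the jump instructions). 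After $M_B'$ halts, by the definition of ``computes a Boolean'' the scratch region has been collapsed to a single-component marking recording $0$ or $1$, and the original $X$ in components $0,\dots,n-1$ is untouched. So after this phase the tape holds $X$ in its first $n$ components plus a flag bit in component $n$.

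Next I would branch on the flag. Using $M_{end}$-style delimiting and a short machine that inspects whether cell $(n, 0^\frown 0)$ is marked, I can put the machine into one of two distinct states $\ell_{\text{zero}}$ or $\ell_{\text{one}}$ depending on the flag, then erase the flag component (a jump-indexed copy of $M_{erase}$), leaving exactly the original marking $X$ on components $0,\dots,n-1$ but with the control state carrying one bit of information. From $\ell_{\text{zero}}$ the machine transfers control to (a renamed-states copy of) $M_1$; from $\ell_{\text{one}}$ it transfers control to (a renamed-states copy of) $M_2$. Because $M_1$ and $M_2$ preserve the number of components and we have restored the tape to exactly $X$, the subcomputation is literally the computation of $M_1(X)$ or $M_2(X)$, including its behavior at limit steps, so $N(X) = M_1(X)$ when $M_B(X)=0$ and $N(X)=M_2(X)$ otherwise. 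Renaming states to make three finite tables disjoint and wiring the halt states of earlier phases to the start states of later phases keeps $N$ a finite set of tuples, hence a legitimate set Turing machine.

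The main obstacle I expect is the bookkeeping for limit steps in the \emph{first two} phases: I must be sure that the copy-and-shift machine $N_0$ and the flag-reading machine, when run through limit ordinals, have well defined limit configurations (the $\lim$ of the marking exists and the $\underline{\lim}$ of the head position behaves), so that control genuinely reaches $\ell_{\text{zero}}$ or $\ell_{\text{one}}$ and then $M_1$ or $M_2$ starts from a bona fide initial configuration $(0,0,X,0)$-style state. The example machines $M_{copy}$, $M_{erase}$, $M_{end}$ already show the relevant limit-step arguments (the head traverses $\Lambda_{X_0}$ in $\prec$-order, so $\underline{\lim}$ collapses to $\lim$, and the least cofinal state is the loop state), and those arguments transfer verbatim to their jump-indexed variants acting on a designated block of components; so the obstacle is really one of careful assembly rather than a new idea. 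Everything else — disjointness of state sets, finiteness of the combined table, the fact that $M_B$ is total so the first phase always terminates — is routine.
\end{proof}
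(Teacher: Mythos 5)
The paper states this lemma without any proof, but your sketch is the standard argument and coincides with the technique the paper itself uses one lemma later in its proof of the While Loop Lemma: copy the input to scratch components $n$ through $2n-1$, run $M_B$ on the copy, branch on the mark in cell $n^\frown 0$, then erase the scratch block and hand the untouched original to a renamed-states copy of $M_1$ or $M_2$. The one point to watch is that your construction fixes $n=\#components(X)$ when laying out the scratch block and when shifting $M_B$, so a finite table built this way gives one machine per $n$ (exactly as in the paper's While Loop Lemma, which explicitly fixes $n\in\omega$) rather than a single $N$ uniform over all inputs; making it uniform would require locating the first unmarked root dynamically via the jump instructions and anchoring source and target components with $\star$-marks (and simulating the left-edge halting behavior of $j^-$ for the shifted $M_B$), which is routine bookkeeping in the spirit of the paper's own informality rather than a gap in the idea.
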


\begin{lemma}(Composition Lemma)
If $M_1$ and $M_2$ are set Turing machines, then there is a set Turing machine $N$ such that
for any well formed marking $X$,
$$
N(X)=M_2(M_1(X))
$$
for inputs $X$ such that $M_1(X)$ and $M_2(M_1(X))$ are defined. 
\end{lemma}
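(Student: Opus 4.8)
The plan is the standard ``chaining'' construction for Turing machines, carried out with care at limit steps. First recall that a set Turing machine is deterministic: by the convention that a tuple is a command indexed by its first two coordinates, for each pair $(\ell,k)$ there is at most one tuple of $M$ beginning with $(\ell,k)$. Fix $p$ strictly larger than every state occurring in $M_1$, and let $\sigma$ be the order-preserving injection on states given by $\sigma(i)=p+i$, extended by $\sigma(H)=H$. Define $M_1'$ from $M_1$ by replacing every tuple $(\ell,k,k',mv,H)$ (a transition into the halt state) with $(\ell,k,k',mv,p)$ and leaving all other tuples unchanged, and define $M_2'$ as the image of $M_2$ under $\sigma$ applied to both state coordinates, i.e.\ replacing $(\ell,k,k',mv,\ell')$ by $(\sigma(\ell),k,k',mv,\sigma(\ell'))$. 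Put $N=M_1'\cup M_2'$. Since the source states of $M_1'$ all lie below $p$ while those of $M_2'$ all lie at or above $p$, no pair $(\ell,k)$ indexes two tuples of $N$, so $N$ is a well-defined finite set Turing machine; note that $p=\sigma(0)$ is exactly the start state of the relabelled copy $M_2'$.

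Now fix a well formed $X$ with $M_1(X)$ and $M_2(M_1(X))$ both defined, and write $Y=M_1(X)$. Because $M_1(X)$ is defined, the run of $M_1$ on $X$ halts at an ordinal $\alpha_1^{*}$ with the head at the $0$-th root and the tape equal to $Y$; moreover $\alpha_1^{*}$ is a successor, say $\gamma_1+1$, since the state at a limit step is the least state occurring cofinally among the earlier states, while $H$ occurs at most once in a run and so never cofinally. I would then establish, by transfinite induction, two claims. (1) For $\alpha\le\gamma_1$ the configuration of $N$ on $X$ at step $\alpha$ equals that of $M_1$ on $X$ at step $\alpha$ — the runs agree at successor steps because $M_1'$ differs from $M_1$ only at its transitions into $H$, which $M_1$ first uses at step $\gamma_1$, and they agree at limit steps because the limit operations depend only on earlier configurations — and at step $\gamma_1+1$ the configuration of $N$ is $(\gamma_1+1,\,0,\,Y,\,p)$, since the last step of $M_1$ performs a mark change and a move that end with the head at the $0$-th root and the tape $Y$, while the corresponding step of $M_1'$ does the same but enters state $p$. (2) For every $\delta$, the configuration of $N$ on $X$ at step $\gamma_1+1+\delta$ is $(\gamma_1+1+\delta,\,\mu_\delta,\,Z_\delta,\,\sigma(m_\delta))$, where $(\delta,\mu_\delta,Z_\delta,m_\delta)$ is the configuration of $M_2$ on $Y$ at step $\delta$. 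The base case $\delta=0$ is the last clause of (1), because $M_2$'s run on $Y$ starts at $(0,0,Y,0)$; successor steps go through because $N$ is then in a state $\sigma(m_\epsilon)\ge p$ and so applies the $\sigma$-image of the tuple $M_2$ applies; and at a limit $\delta$ one observes that $\gamma_1+1+\delta=\gamma_1+\delta$ is a limit and, as $\delta\ge\omega$, the tail of $N$'s configuration sequence below $\gamma_1+\delta$ is exactly the $\sigma$-relabelled tail of $M_2$'s run on $Y$, so the pointwise tape limit, the $\underline{\lim}$ of the head positions, and the least cofinally occurring state all match after applying $\sigma$; here one uses that $\sigma$ is order-preserving and that its range is disjoint from the source states of $M_1'$, so no state from the $M_1$-phase contaminates the tail. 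Since $M_2(Y)$ is defined, $M_2$'s configurations on $Y$ are defined throughout, hence so are $N$'s.

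To conclude: $M_2(Y)$ being defined, $M_2$'s run on $Y$ halts at a successor $\gamma_2+1$, with head at the $0$-th root and tape $M_2(Y)$, and halts nowhere earlier; by claim (2), $N$'s run on $X$ has the non-halt state $\sigma(m_\delta)$ at every step $\gamma_1+1+\delta$ with $\delta<\gamma_2+1$, and at step $\gamma_1+1+\gamma_2+1$ it has state $\sigma(H)=H$, head at the $0$-th root, and the well formed tape $M_2(Y)$. Combined with claim (1), which shows $N$ does not halt during the $M_1$-phase either, this gives that $N(X)$ is defined and equals $M_2(Y)=M_2(M_1(X))$. The one genuinely delicate point — and the step I expect to demand the most care — is the interface between the two phases and the limit bookkeeping: one must invoke exactly the requirements built into ``$M_1(X)$ is defined'' (the head returns to the $0$-th root and the output is well formed) to see that the first phase ends in a legitimate start configuration for the second, and one must choose the relabelling $\sigma$ to be order-preserving with range disjoint from the states of $M_1$ precisely so that it commutes with the ``least cofinal state'' rule at limit ordinals.
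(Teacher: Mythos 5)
The paper states the Composition Lemma with no proof at all (the later lemma that TUR is closed under composition is likewise dismissed with ``same as for ordinary Turing machines''), so your write-up is not an alternative to the paper's argument but a filling-in of an omitted one. Your construction is correct and is exactly what the setting demands: the standard chaining of $M_1$ into a relabelled $M_2$, together with the two observations that actually carry the weight here — that the least-cofinal-state rule, the $\underline{\lim}$ of head positions, and the pointwise tape limit all depend only on the tail of the run, so an order-preserving relabelling of $M_2$'s states with range disjoint from $M_1$'s makes the limit bookkeeping commute; and that definedness of $M_1(X)$ forces the first phase to end at a successor step, with the head at the $0$-th root and a well formed tape, which is precisely a legitimate $\sigma$-image of $M_2$'s start configuration on $Y$.

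One loose end worth a sentence in your proof: Definition~\ref{configurations} also lets a machine ``halt'' when a $j^-$ move is undefined. If that clause is read as entering state $H$, your $M_1'$ does not intercept such implicit halts, and $N$ could stop at the end of the first phase instead of handing off to $M_2'$ (this can only matter when the offending $j^-$ occurs at the $0$-th root, since otherwise $\nu_{\alpha^*}\neq 0$ and $M_1(X)$ would be undefined anyway). Either argue that $M_1(X)$ being defined rules this case out under the paper's conventions, or guard the $j^-$ instructions of $M_1'$ so that this exit is also redirected to state $p$.
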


\begin{lemma}(While Loop Lemma)
Let $M$ and $M_B$ be set Turing machines with $M$ preserving the number of components and
$M_B$ computing a Boolean. Let $n\in\omega$. For each marking $X$ with $n$-components let
$\langle X_{\alpha}\mid \alpha\leq \alpha^*\rangle$ be the sequence determined from
$X$ by $M$ and $M_B$. 
Then there exists a set Turing machine $N$ such that for all markings $X$ with $n$ components, if 
$\alpha^*<\infty$, then
$$
N(X)=X_{\alpha^*}.
$$
and if $\alpha^*=\infty$, then $N(X)$ is undefined.
\end{lemma}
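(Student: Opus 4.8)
The plan is to build $N$ from $M$ and $M_B$ by interleaving, on a working copy of the input, repeated simulations of $M_B$ followed, when the Boolean comes out nonzero, by one "run" of $M$, and to arrange the bookkeeping so that the limit behavior of $N$ at limit ordinals faithfully tracks the limit behavior of the subsequence $\langle X_\alpha \mid \alpha \le \alpha^* \rangle$ determined by $M$ and $M_B$. Concretely, $N$ keeps the current marking $X_\gamma$ in a designated block of components, together with auxiliary scratch components used by the copies of $M_B$ and $M$; each stage of $N$ (i) uses the Composition Lemma to run $M_B$ on a scratch copy of the current marking, (ii) uses the If Then Else Lemma to branch on the result — if $M_B = 0$, clean up the scratch components, return the head to the $0$-th root, and halt; if $M_B \ne 0$, erase the scratch, then run one copy of $M$ on the working block — and (iii) loops back to (i). Since $M$ preserves the number of components and $M_B$ is defined on every well formed marking, every finite stage of this process is well defined whenever the corresponding $X_\gamma$ is, and the $\gamma$-th stage of $N$ produces exactly $X_\gamma$ in the working block.

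The core of the argument is the analysis of limit steps. I would show, by induction on $\gamma \le \alpha^*$, that there is an order-preserving map sending the ordinal time $\gamma$ in the $M$-with-$M_B$ subsequence to the ordinal time at which $N$ has just completed its $\gamma$-th stage, and that at that time the working block of $N$'s tape equals $X_\gamma$ and the scratch components are blank. The successor case is immediate from the Composition and If Then Else Lemmas applied to one more round. For $\gamma$ a limit, one observes that the times at which $N$ finishes its stages form a strictly increasing continuous sequence, so $N$'s configuration at the supremum of those times is the limit configuration in the sense of Definition~\ref{configurations}. The pointwise limit of $N$'s full tape then restricts, on the working block, to $\lim_{\delta \to \gamma} X_\delta = X_\gamma$ (which exists precisely because the subsequence is defined at $\gamma$), and on the scratch components to $0$ (each scratch cell is reset to $0$ at the end of every stage, hence its value is $0$ cofinally, hence in the limit). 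One must also check that $N$'s state and head position at the limit are exactly the ones at which stage $\gamma$ begins: this is why it is convenient to dedicate a distinguished state $\ell_{\rm top}$ that is entered only at the top of each stage and that is numerically the least state occurring in the loop, so that it is the least state cofinal in $\langle \ell_\delta \rangle_{\delta < \gamma}$; similarly one routes the head back through the $0$-th root between stages so that $\underline{\lim}$ of the head positions is $0$.

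Given this correspondence, the conclusion follows. If $\alpha^* < \infty$, then after $N$ completes stage $\alpha^*$ the simulated $M_B$ either returns $0$ or $\alpha^* = \alpha^*$ was forced because $\ell_{\alpha^*}$ was the halt state of $M$; in the former case $N$'s If-Then-Else branch cleans up and halts with the working block equal to $X_{\alpha^*}$, head at the $0$-th root; in the latter, one arranges that when the simulated copy of $M$ halts $N$ likewise finalizes with $X_{\alpha^*}$. Either way $N(X) = X_{\alpha^*}$. If $\alpha^* = \infty$ then the simulation never reaches a stage at which $M_B = 0$ or $M$ halts, so $N$ runs through arbitrarily large ordinals and $N(X)$ is undefined, as required.

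The main obstacle I anticipate is the limit-step bookkeeping — specifically, guaranteeing that the scratch components used by the embedded simulations of $M_B$ and $M$ genuinely return to blank at the end of every stage (so they vanish in the limit) and that no "transient" high state or deep head position used inside a simulation becomes the $\liminf$ at a limit time. This forces a careful choice of how the sub-machines from the Composition and If-Then-Else Lemmas are wired together: their line numbers must be shifted so that the loop's re-entry state is globally minimal, and each sub-computation must be bracketed by explicit erase passes (available via machines like $M_{erase}$, generalized to the scratch components) and a return-to-root pass. Once this wiring discipline is fixed, the inductive verification is routine but the bookkeeping is where all the real care lies.
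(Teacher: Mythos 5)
Your proposal is correct and follows essentially the same route as the paper's proof: iterate $M$, keep a copy of the current marking in spare components, test it with $M_B$ via the Composition and If-Then-Else machinery, and exit when the Boolean is $0$. You in fact supply considerably more detail than the paper does on the one genuinely delicate point (arranging the re-entry state to be minimal and the scratch components to be blank cofinally so the limit configurations behave), which the paper's two-sentence proof leaves implicit.
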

\begin{proof}
We form a loop iterating $M$ and exit the loop if $M_B(X_{\alpha})=0$. The core of the loop consists of the composition of 
$M$ with a machine that copies its output to components $n$ thru $2n-1$. We test whether to exit the loop by
applying $M_B$ to this copy and exit the loop if and only if the mark in cell $n\frown 0$ is $0$. 
\end{proof}

\begin{lemma}(Erase Below Unique Mark Lemma)
There is a set Turing machine $M$, such that if $X$ is a well formed tape with unique mark
$\eta$ in the first component, then $M(X)=Y$ with $X(\nu)=Y(\nu)$ for all $\nu$ not  below $\eta$, and with
$Y(\rho)=0$ for all $\eta\vartriangleleft\rho$, i.e., all marks below $\eta$ in $Y$ are $0$. 
\end{lemma}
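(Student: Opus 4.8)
The plan is to locate the unique mark $\eta$, mark the entire subtree below it for erasure, and then sweep through that subtree erasing marks, all the while using the well-formedness of $X$ and the guiding $\star$-superscripted marks to detect the boundary of the subtree. The key point is that ``below $\eta$'' means the set of $\rho$ with $\eta \vartriangleleft \rho$ and $X(\rho) \neq 0$ (together with their trailing-zero neighbors, in the sense of $\Lambda$ from the $M_{erase}$ discussion), and this collection is itself a code-like structure that can be traversed in $\prec_{lex}$ order.

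First I would build a submachine that walks from the $0$-th root in $\prec_{lex}$ order through the first component until it reaches the cell carrying the unique mark $X(\eta) \in \{0^\star,1^\star,2^\star,3^\star,4^\star\}$; since $X$ is well formed and has a unique mark in the first component, this search terminates. I would then have the machine record the underlying mark by stripping the $\star$-superscript (replacing $k^\star$ by $k$, or by $\star$ as a temporary start-marker so the machine can find its way back to $\eta$ at the end). Second, I would run a variant of the $M_{erase}$ machine, but \emph{relativized to the subtree rooted at $\eta$}: starting from $\eta$, it performs the $z / u\!+\! / u$ traversal of $\Lambda$ restricted to addresses $\rho$ with $\eta \vartriangleleft \rho$, setting every nonzero mark it encounters to $0$, and it halts the traversal precisely when the $u\!+$ move would take it up past $\eta$ itself (detected because that is where the start-marker $\star$ sits). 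Here the behavior at limit steps is exactly as in the $M_{erase}$ example: the traversal order on the restricted $\Lambda$ is strictly $\prec_{lex}$-increasing, so $\underline{\lim}$ reduces to $\lim$, the state stabilizes to the loop state cofinally, and the marking limit exists because each cell below $\eta$ is written to $0$ at most once and then never again. Finally, the machine restores $\eta$ to its original unique mark (using the recorded value), moves the head to the $0$-th root, and halts, so that the output $Y$ agrees with $X$ off the subtree below $\eta$ and is $0$ everywhere strictly below $\eta$, as required. All of this is assembled from the Composition Lemma and the While Loop Lemma applied to the relativized erase-loop with a Boolean that tests whether the head has returned to the $\star$-marked cell $\eta$.

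The main obstacle I expect is making the ``relativized traversal'' robust: a plain copy of $M_{erase}$ uses the global $\star\star$ end-mark of the component to know when to stop, but here there is no such delimiter for the subtree below $\eta$, so the machine must instead recognize that a $u\!+$ step from a cell of the form $\eta$ would exit the subtree. Concretely, after each $u\!+$ the machine should check whether the cell it lands on carries the start-marker $\star$ (i.e.\ is $\eta$); if so it terminates the loop, otherwise it continues. One must check that this test is well defined at every stage of the traversal — in particular that the traversal never ``escapes'' $\eta$ by some other route and never gets stuck at a limit — and this follows from well-formedness of $X$ (so the subtree below $\eta$ is a genuine code, hence the traversal of its $\Lambda$ is order-isomorphic to an ordinal) together with Lemma on rank preservation, which guarantees the subtree has set-theoretic rank an ordinal and so the traversal halts at some ordinal stage $\alpha^* < \infty$. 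Once that detection is in place, the rest is routine packaging via the composition and while-loop constructions already established.
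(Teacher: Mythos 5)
The paper states this lemma without any proof, so there is no official argument to compare against; your overall plan --- relativize $M_{erase}$ to the subtree rooted at $\eta$, traverse its $\Lambda$ in $\prec_{lex}$ order, and get termination from well-foundedness and the limit rules --- is clearly the intended construction (the proof of the adjacent Subtree Copy Lemma, which invokes this lemma, uses exactly this technique). However, the step you yourself identify as the main obstacle, detecting the boundary of the subtree, is implemented in a way that fails. You propose to terminate the loop by checking, after each $u+$ move, whether the head has landed on the $\star$-marked cell $\eta$. But with the paper's move set, $u+$ takes the head from $(\eta_0,\ldots,\eta_{m-1},\alpha)$ to $(\eta_0,\ldots,\eta_{m-1}+1)$, i.e.\ to the next \emph{sibling} of $\eta$, never to $\eta$ itself. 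So when the traversal reaches the first blank child $\eta^\frown\alpha^{**}$ of $\eta$ and executes $u+$, the head exits the subtree onto a cell that does not carry the $\star$, the test fails, and the loop keeps erasing cells that are not below $\eta$ --- violating the requirement that $Y(\nu)=X(\nu)$ for all $\nu$ not below $\eta$. (Trying to repair this by moving up with $u$ to inspect the parent does not work either, since the head cannot then return to the particular frontier child it came from.)

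The repair is the one the paper uses in the Subtree Copy Lemma: as a preprocessing step, run a local version of $M_{end}$ at $\eta$ to place the end marker $\star\star$ on $\eta$'s first blank child $\eta^\frown\alpha^{**}$, which is the $\prec$-largest element of the restricted $\Lambda$. The erasing sweep then terminates exactly as $M_{erase}$ does, by reading $\star\star$, erasing it, and doing a single $u$ back to $\eta$, after which the original mark of $\eta$ is restored from the (finite) machine state. With that change, the rest of your argument --- the limit-step analysis via $\underline{\lim}$, the rank bound guaranteeing $\alpha^*<\infty$, and the assembly by composition --- goes through.
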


\begin{lemma}(Subtree Copy Lemma)
Let $X$ be a well formed marking with $2$-components $X_0$ and $X_1$ and unique marks
$\eta_0$ in component $0$ and $\eta_1$ in component $1$. 
Then there is a set Turing machine $N$ whose output $Y$ has $2$-components, such that
$X_0=Y_0$ and $X_1(\nu)=Y_1(\nu)$ for all $\nu$ for which it is not the case that $\eta_1\vartriangleleft \nu$
and that ${Y_1}_{\eta_1}={X_0}_{\eta_0}$.  In other words, he machine copies that part of $X_0$ below $\eta_0$
to the part of $X_1$ below $\eta_1$ after first erasing the part of $X_1$ below $\eta_1$. 
\end{lemma}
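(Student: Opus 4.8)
The plan is to build $N$ by composing several of the machines already constructed, driven by a while-loop that walks through the subtree of $X_0$ hanging below $\eta_0$ in $\prec_{lex}$-order. First I would preprocess: using the Erase Below Unique Mark Lemma (applied to component $1$), clear everything strictly below $\eta_1$, so that $X_1$ below $\eta_1$ is entirely blank while the unique mark $1^\star$ (say) still sits at $\eta_1$ and at $\eta_0$. The key bookkeeping device is to maintain, throughout the loop, a ``current position'' marked by a unique-mark symbol in each of the two components simultaneously: a pointer $\rho_0$ in component $0$ with $\eta_0 \vartriangleleft \rho_0$ (or $\rho_0 = \eta_0$), and the corresponding relative pointer $\rho_1$ in component $1$ with $\eta_1 \vartriangleleft \rho_1$, arranged so that the tail of $\rho_0$ past $\eta_0$ equals the tail of $\rho_1$ past $\eta_1$. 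The loop body does: read the mark at $\rho_0$ in component $0$; if it is nonzero (i.e.\ a $1$), place a $1$ at $\rho_1$ in component $1$; then advance both pointers by one step of the canonical traversal of the induced subtree ${X_0}_{\eta_0}$, i.e.\ try move $z$ (descend), and if the descended cell is blank, back up and do $u+$ repeatedly until a nonblank cell is found or we return to $\eta_0$.

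The traversal logic is exactly the $\Lambda$-traversal used in $M_{erase}$, $M_{traverse\text{-}2}$ and $M_{copy}$, except that the head must be anchored at $\eta_0$ and $\eta_1$ rather than at a root, and the ``am I done'' test is ``did the pointer return to $\eta_0$'' rather than ``did I hit the $\star\star$ end mark.'' I would handle this by using the unique-mark cells at $\eta_0,\eta_1$ as sentinels: the machine recognizes $0^\star$/$1^\star$ in the same way $M_{copy}$ recognizes $\star$ and $\star\star$. Concretely, the step-forward submachine, given the head at $\rho_0$ in component $0$, performs the $\prec_{lex}$-successor computation within the subtree rooted at $\eta_0$; this is a bounded piece of head-movement logic (try $z$; on a $0$ do $u$; then on a $1$ do $+$; repeat), terminating either at the next node of $\Lambda_{{X_0}_{\eta_0}}$ or at $\eta_0$ itself, at which point we set the Boolean ``exit'' flag. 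Moving the companion pointer in component $1$ by the identical sequence of moves is done by recording each atomic move and replaying it after a $j^+$ jump, then $j^-$-ing back — the same jump-and-return trick as $M_{copy}$.

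Assembling: by the While Loop Lemma, wrap ``(copy mark $\rho_0 \to \rho_1$ if nonzero) then (advance both pointers)'' as the loop body with Boolean ``pointer $\rho_0 \ne \eta_0$,'' then by the Composition Lemma prepend the Erase-Below step and the initialization (place pointers at $\eta_0$, $\eta_1$) and append a cleanup step that removes the auxiliary pointer marks, restoring the unique marks at $\eta_0,\eta_1$ and the head to root $0$. Since ${X_0}_{\eta_0}$ is a basic code, $(X_0\text{ below }\eta_0,\vartriangleleft)$ is well ordered, so the traversal visits every such node exactly once and the loop count is exactly $\alpha^* = $ (order type of that subtree) $< \infty$, giving termination and well-formedness of the output. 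The main obstacle I anticipate is the limit-step behavior of the companion pointer in component $1$: I must check that when the component-$0$ pointer passes through a limit node via the $\underline{\lim}$ rule, the replayed moves keep the component-$1$ pointer in lock-step, i.e.\ that $\underline{\lim}$ commutes with the ``shift the anchor from $\eta_0$ to $\eta_1$'' translation. This should follow because both subtrees have isomorphic $\vartriangleleft$-structure (both blank below their anchors except for the $1$'s we are copying in, which are copied in increasing order), so the two head-position sequences are related by an order isomorphism of initial segments of $^{<\omega}ON$ and hence have corresponding $\underline{\lim}$'s — but verifying this carefully, especially that no spurious limit point is created in component $1$ because of the cells not yet written, is the delicate step and I would devote the bulk of the proof to it.
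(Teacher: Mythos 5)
Your proposal is correct and follows essentially the same route as the paper: erase below $\eta_1$ first, then traverse the subtree below $\eta_0$ in $\prec$-order while keeping a pair of synchronized unique marks (one per component) and copying cell by cell, in the style of $M_{copy}$. The only differences are cosmetic — the paper uses a $\star\star$ end marker where you use return-to-$\eta_0$ as the exit test, and you make the appeal to the While Loop and Composition Lemmas (and the limit-step bookkeeping) explicit where the paper leaves them implicit.
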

\begin{proof}
We first let the machine operate as in the previous lemma to erase below $\eta_1$. We then place an end marker $\star\star$ to 
the supremum cell bounding the cells immediately below $\eta_0$. Once this set of preparations steps is finished, 
we then run a local version of a machine analogous to $M_{end+traverse}$ on 
the marks below $\eta_0$, but modify $M_{end+traverse}$ so that between moves we advance in the $\prec$ order 
both of the unique marks and copy whatever the unique mark
in the $0$-component is copied to the unique mark in the $1$-component. After copying a cell from the unique mark in
the $0$-component to the unique mark in the $1$-component, we advance the position of the unique mark in the
$1$-component in the style of $M_{end+traverse}$, before moving back to the unique mark in the $0$-component advancing it
and copying what mark we find over to the cell with unique mark in the $1$-component. 
\end{proof}

We will want to develop a characterization of set Turing computability in terms of the closure under certain computable operations
starting basic initial functions, much like the characterization of computable functions on $\mathbb{N}$. In the case of ordinary
computability it is easy to show that the relation $m<n$ among elements of $\mathbb{N}$ is computable by a Turing machine. 
The analogous fundamental relation for set Turing computability is the relation $x\in y$ among elements of $V$. The proof
that this relation is set Turing computable is not so straightforward. 

The point of the previous lemmas  and of the lemmas that follow is to provide tools that will enable use to prove that the $\in$-relation is computable by a set Turing machine. By making repeated use of the While Loop Lemma, we will prove a series of lemmas culminating in the Canonicalization Lemma, the main lemma we use to prove that the $\in$-relation is Turing computable. 

\begin{lemma}(Equality Lemma)
There is a set Turing machine $M_{=}$ which takes on Boolean values in the $2$-component
and decides if the first two components of the input marking are (literally) equal. 
\end{lemma}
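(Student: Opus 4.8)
The plan is to reduce literal equality of $X_0$ and $X_1$ to two one–directional containment checks, namely ``every cell marked in the $0$-component is marked in the $1$-component'' and its mirror image, each of which is a small variant of the traversal machine $M_{copy}$. First I would preprocess $X$ (using jump-modified copies of $M_{end}$) so that components $0$ and $1$ each carry a start marker $\star$ at their root and an end marker $\star\star$ just past the last child of their root, and adapt the traversal machines below to this delimited convention. I would also initialize the answer register, component $2$, with a $1$ at $2^{\frown}\langle\rangle$ and a $1$ at $2^{\frown}0$, tentatively recording ``equal''. By construction this $1$ at $2^{\frown}0$ will survive the entire computation exactly when $X_0\subseteq X_1$ and $X_1\subseteq X_0$ as subsets of $^{<\omega}ON$, i.e.\ exactly when $X_0=X_1$ literally, so component $2$ of the output will be the required Boolean.

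The core machine $M_{0\subseteq 1}$ walks through $\Lambda_{X_0}$ in $\prec_{lex}$-order exactly as $M_{copy}$ does (using the $z$ and $u{+}$ moves and the $\star\star$ marker to detect completion), but whenever it reads a $1$ at a cell $0^{\frown}\bar{\alpha}$ it performs $j{+}$, moving to $1^{\frown}\bar{\alpha}$, and inspects the mark there. If it finds a $1$ it performs $j{-}$, returning to $0^{\frown}\bar{\alpha}$, and resumes the traversal; if it finds a $0$ the markings differ, so it performs $j{-}$ back to $0^{\frown}\bar{\alpha}$, repeatedly applies $u$ until it reads $\star$ (which, after preprocessing, occurs only at $0^{\frown}\langle\rangle$; the ``go up'' state writes each mark back unchanged and is defined on $0$, $1$, $\star\star$ alike), then navigates to $2^{\frown}0$ (two $j{+}$ moves and a $z$), writes $0$ there, returns to $0^{\frown}\langle\rangle$ and halts. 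The symmetric machine $M_{1\subseteq 0}$ does the same while traversing $\Lambda_{X_1}$, using $j{-}$ to peek into component $0$. The desired $M_{=}$ is then $M_{\mathrm{end}}$-preprocessing, followed by $M_{0\subseteq 1}$, followed by $M_{1\subseteq 0}$, followed by a short cleanup pass restoring the auxiliary $\star,\star\star$ marks and leaving the head at the $0$-th root in the halt state; these are assembled by the Composition Lemma, and the bounded traversal loops may equally be packaged via the While Loop Lemma with an $M_B$ testing for the $\star\star$ marker.

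The main point to verify is that the back-and-forth between components does not corrupt the limit behaviour. Between two consecutive cell-advances the head takes only finitely many steps, the cells actually advanced through within $M_{0\subseteq 1}$ (resp.\ $M_{1\subseteq 0}$) form a strictly $\prec_{lex}$-increasing sequence lying entirely in component $0$ (resp.\ $1$), and every address of component $0$ is $\prec_{lex}$-below every address of component $1$. Hence, as for $M_{copy}$ and $M_{erase}$, at a limit step $\lambda$ the operator $\underline{\lim}$ applied to the head positions returns precisely the $\prec_{lex}$-limit of the cells traversed so far in the active component (the cofinally many ``other-component'' positions above them do not lower this infimum-of-weak-upper-bounds), and, having arranged that the ``at a traversal cell, about to advance'' state recurs cofinally, the least cofinal state is that traversal state; so the machine is correctly positioned and in the correct state to continue. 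Termination (rather than $\alpha^*=\infty$) is the well-foundedness argument already implicit for $M_{copy}$: on a well-formed input each component is a basic code, so $(\Lambda_{X_i},\prec_{lex})$ has ordinal order type and the traversal reaches $\star\star$ after that many steps, with only finitely many extra steps inserted per cell. The remaining checks -- that the individual small tables behave as described, and that the navigation-to-root loop correctly passes over any intervening $0$ or $\star\star$ marks -- are routine table bookkeeping handled with the Composition and If Then Else Lemmas.
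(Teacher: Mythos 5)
Your overall decomposition is sound and close in spirit to the paper's: the paper's $M_{=}$ also initializes a $1$ at $2^{\frown}0$, traverses $\Lambda_{X_0}$ in $\prec$ order, and uses $j^{+}/j^{-}$ to compare marks, writing a $0$ at $2^{\frown}0$ on any disagreement. The one structural difference is that the paper makes a \emph{single} pass: because it checks agreement of marks at every visited cell of $\Lambda_{X_0}$, including the trailing boundary $0$-cells, and because both components are basic codes (closed under initial segments and with no gaps among siblings), agreement on that one traversal already rules out $X_1$ having extra marked addresses, so no second containment check is needed. Your two-pass double-containment version is logically fine, but it is exactly the second pass that introduces a genuine problem.

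The gap is in your limit analysis of $M_{1\subseteq 0}$. You correctly observe that every address of component $0$ is $\prec_{lex}$-below every address of component $1$, but then assert that the cofinally occurring other-component positions lie \emph{above} the active traversal and hence do not affect $\underline{\lim}$. For $M_{1\subseteq 0}$ the peeked-at cells $0^{\frown}\bar{\alpha}_{\gamma}$ lie \emph{below} the traversal cells $1^{\frown}\bar{\alpha}_{\gamma}$, occur cofinally before any limit stage $\lambda$, and increase to some $0^{\frown}\bar{\rho}$ in component $0$. Since $\underline{\lim}$ is the \emph{least} weak upper bound and a weak upper bound need only dominate cofinally many positions, $0^{\frown}\bar{\rho}$ is a weak upper bound strictly $\prec_{lex}$ every component-$1$ address, so at limit steps the head of $M_{1\subseteq 0}$ lands in component $0$, not at the next traversal cell of component $1$; the ``traversal state'' you arrange to be least-cofinal then finds itself reading the wrong component. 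This is repairable -- either adopt the paper's one-pass check (agreement at boundary $0$-cells included), or physically swap components $0$ and $1$ with a copy machine and rerun $M_{0\subseteq 1}$, or route the comparison through a scratch component with index above both -- but as written the argument that $M_{1\subseteq 0}$ survives limit stages does not go through.
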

\begin{proof}
First a $1$ mark is placed in the cell with address $2^{\frown}0$. The head of the set Turing machine traverses the first component according to the $\prec$ order. As it does this it jumps to the
corresponding cell in the second component with the $jump+$ operation, checks if the mark there
is the same as the mark in the cell it just came from, and then jumps back before continuing the traverse. If ever, the marks do
not agree, then a $0$ is placed in the cell with address $2^\frown 0$. 
\end{proof}

\begin{lemma}\label{existsequals}(Given an x there is a equal element of y lemma)
There is a set Turing machine $M_{\exists =}$ which given an input $X=(X_0,X_1)$ with two components takes on Boolean values in the component-$2$ and
decides if there is an ordinal $\alpha$ such that $X_0$ is equal to ${X_1}_{\alpha}$.
\end{lemma}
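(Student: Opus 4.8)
The plan is to build $M_{\exists =}$ as a while loop that sweeps a ``cursor'' ordinal $\alpha$ through the immediate children of the root of $X_1$ — i.e.\ through the cells $1^\frown\alpha$ — and at each stage tests whether the subtree ${X_1}_\alpha$ literally equals $X_0$, using (a local version of) the Equality Lemma. First I would preprocess the input with $M_{end}$ (suitably jumped to the relevant components) so that the beginning and end of $X_0$ and of the list of children $1^\frown\alpha$ are delimited by $\star$ and $\star\star$ marks; this is what tells the machine when it has exhausted the children of the root of $X_1$ without finding a match. I would also reserve a fresh component, say component $2$, to hold the Boolean answer, initialized to $1$ (meaning ``no witness found yet''), and set aside working components (by the remarks following $M_{copy}$, $M_{erase}$, and the Subtree Copy Lemma, all the basic machines generalize to act on any designated components).

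The core of the loop does the following, for the current cursor position at cell $1^\frown\alpha$. If the cell $1^\frown\alpha$ carries the end mark $\star\star$, the loop exits (no witness was found, and component $2$ retains its value $1$). Otherwise, using the Subtree Copy Lemma, copy ${X_1}_\alpha$ into a scratch component — this requires first planting a unique mark at $1^\frown\alpha$ and a unique mark at the root of the scratch component, then invoking the Subtree Copy machine, which erases the scratch component below its mark and copies ${X_1}_\alpha$ there. Now run the Equality Lemma machine $M_{=}$ comparing $X_0$ against the scratch component; if it returns $1$ (literally equal), set component $2$ to $0$ and exit the loop; if it returns $0$, advance the cursor from $1^\frown\alpha$ to $1^\frown(\alpha+1)$ with the $+$ move, restore the scratch component to empty with $M_{erase}$, and continue. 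This whole body preserves the number of components (it only writes to scratch components that it also cleans up, plus the answer component, whose size is fixed), so the While Loop Lemma applies: it produces a set Turing machine $N$ that runs the body until the governing Boolean (``cursor not at $\star\star$ and answer still $1$'') becomes $0$. A final cleanup phase strips the $\star$/$\star\star$ marks and scratch components and returns the head to the root, leaving a marking whose only nonzero content beyond $X_0,X_1$ is the $1$ at $2^\frown 0$ precisely when some ${X_1}_\alpha$ equalled $X_0$ — which is exactly the required Boolean output.

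The main obstacle is the limit-step behavior of the cursor sweep, since the children of the root of $X_1$ are indexed by an arbitrary ordinal, so the cursor genuinely passes through limit stages. At a limit stage the $\underline{\lim}$ operator must land the cursor on cell $1^\frown\lambda$ rather than overshooting, and the machine's state must be the least state cofinal in the loop body — so the loop must be laid out (as in $M_{erase}$ and $M_{copy}$, where the strictly increasing $\prec$-sweep makes $\underline{\lim}$ collapse to $\lim$) so that the cursor-advance move is the genuinely least-indexed state of the body, guaranteeing the machine resumes the test correctly at $1^\frown\lambda$; one must also check that the scratch and answer components have stabilized by the limit, which holds because each has bounded rank and each loop iteration fully resets the scratch component. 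A secondary point requiring care is that the Subtree Copy Lemma as stated works on a two-component marking with unique marks; applying it to designated components of a larger marking is routine given the ``easily generalized'' remarks, but it should be noted explicitly. Once these limit bookkeeping issues are handled, correctness is a straightforward transfinite induction: at cursor stage $\alpha$ the answer component is $0$ iff ${X_1}_\beta = X_0$ for some $\beta<\alpha$, and the loop terminates (at the latest when the cursor reaches $\star\star$) because the children of the root of $X_1$ form a set.
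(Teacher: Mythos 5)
Your proposal follows essentially the same route as the paper's proof: a While Loop around the Equality Lemma machine $M_{=}$, a cursor (unique mark) tracking the current child $1^\frown\alpha$ of the root of $X_1$, a subtree copy of ${X_1}_\alpha$ into a scratch component for the comparison, and the Boolean answer maintained in component~$2$; your extra attention to the limit-step behavior of the cursor only fills in detail the paper leaves implicit. The one slip is cosmetic: you initialize component~$2$ to $1$ and set it to $0$ upon finding a witness, yet conclude that the final output has a $1$ at $2^\frown 0$ exactly when a witness exists --- fix the convention one way or the other.
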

\begin{proof}
We use the while loop lemma applied to the machine $M_{=}$, but there are some slight modifications and other details to fill in. 
We use unique marks to mark to current element of $X_1$ to test, more formally which $\alpha$ to test. We then copy $X_1$ below $\alpha$ to
component-$3$. Within the loop, we apply $M_{=}$ to component-$0$ and component  $3$, letting the output of $M_{=}$
reside in component-$2$. We halt the loop if ever the cell at $2^{\frown}0$ gets a mark of $1$, then erase component-$3$.
\end{proof}

\begin{lemma}(For all x there is an equal y lemma)
There is a set Turing machine $M_{\forall x\exists y\,x=y}$ which takes on Boolean values in the component-$2$
and decides if for every element of the component-$0$ there is an element of the component-$1$ which is equal
(as markings) to the element of component-$0$. 
\end{lemma}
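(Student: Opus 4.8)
The plan is to realize $M_{\forall x\exists y\,x=y}$ as an outer \textbf{while loop} that ranges over the immediate children of the root of component-$0$ and, for each such child $\alpha$, calls a relocated copy of the machine $M_{\exists =}$ of Lemma~\ref{existsequals} on the pair $({X_0}_{0^\frown\alpha},X_1)$, conjoining the Boolean outcomes. First I would have the machine place a $1$ in the cell $2^\frown 0$, recording the provisional answer ``true'' (using the same Boolean convention as in the Equality Lemma, where a $1$ at $2^\frown 0$ means true and a $0$ there means false), and run an $M_{end}$-style preprocessing pass to delimit component-$0$ and set up a unique mark at the root's first child.

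The body of the loop, executed for the child $\alpha$ currently tracked by a unique mark at $0^\frown\alpha$, first normalizes the scratch components $3,4,5$ to empty. Using the Subtree Copy Lemma with $\eta_0=\langle\alpha\rangle$ in component-$0$ and $\eta_1=\langle\rangle$ (a unique mark at the root of component-$3$), it copies the subtree below $0^\frown\alpha$ onto component-$3$, so component-$3$ becomes a basic code for the $\alpha$-th element of $Decode(X_0)$; it likewise copies $X_1$ onto component-$4$ via a relocated $M_{copy}$. It then runs the version of $M_{\exists =}$ relocated to read operands from components $3,4$ and leave its Boolean in component-$5$ — such a relocation exists by the routine jump-shifting of machines onto other components used throughout. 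If the cell $5^\frown 0$ then holds $0$, the child $\alpha$ witnesses that some element of component-$0$ has no equal in component-$1$, so the body writes $0$ into $2^\frown 0$ and raises an ``exit'' flag; otherwise it advances the unique mark to the $\prec$-next child of the root of component-$0$ (by condition~(2) of the definition of basic code the children form an initial segment, so exhaustion is detected on hitting the first $0$), cleans up the scratch components, and raises a ``continue'' flag. Taking $M_B$ to test this continue/exit flag, the While Loop Lemma yields a single machine $N$ performing the iteration; after the loop $N$ erases the scratch components, moves its head to the root, and halts.

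For correctness one checks that on input $X=(X_0,X_1)$ the loop runs for at most $\mathrm{ot}\{\alpha : X_0(\langle\alpha\rangle)\neq 0\}$ iterations, each iteration a terminating subcomputation because $M_{\exists =}$, $M_{copy}$, and the Subtree Copy machine all halt on well-formed inputs; hence $\alpha^*<\infty$ always and $M_{\forall x\exists y\,x=y}(X)$ is always defined, as required of a deciding machine. Since $2^\frown 0$ is only ever overwritten from $1$ to $0$ and never back, the value left there is $1$ iff every child $\alpha$ passed its $M_{\exists =}$ test, i.e.\ iff every element of $Decode(X_0)$ equals, as a marking, some element of $Decode(X_1)$.

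The step I expect to be the main obstacle is making the outer loop behave correctly at limit stages when $Decode(X_0)$ is infinite, so that the limit configuration computed by the While Loop Lemma's machine is the intended one. The key points are that the successive unique-mark positions form a strictly $\prec$-increasing sequence among the siblings $0^\frown\alpha$, so their $\underline{\lim}$ at a limit step reduces to the ordinary limit and lands on the correct next child (exactly as for $M_{erase}$ and $M_{traverse-2}$), and that the provisional-answer cell $2^\frown 0$ is monotone, so it survives limits with the correct value; the remaining care is simply to normalize the scratch components $3,4,5$ to empty at the \emph{start} of each iteration, so that the loop body genuinely preserves the number of components, which is the hypothesis needed to invoke the While Loop Lemma.
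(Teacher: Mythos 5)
Your proposal is correct and takes essentially the same route as the paper, which simply forms a while loop around $M_{\exists =}$ iterating over the elements of component-$0$ and conjoins the Boolean outcomes; the paper's own proof is a one-line sketch and your version supplies the same construction in more detail (unique marks, subtree copies to scratch components, monotone Boolean cell, limit-stage behavior).
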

\begin{proof}
Similar to the previous proof except this time the while loop is formed around the machine $M_{\exists=}$. 
\end{proof}

\begin{lemma}(For all y there is an equal x lemma)
There is a set Turing machine $M_{\forall y\exists x \,x=y}$ which takes on Boolean values in the component-$2$
and decides if for every element of the component-$1$ there is an element of the component-$0$ which is equal
(as markings) to the element of component-$1$. 
\end{lemma}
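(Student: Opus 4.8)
The plan is to copy the proof of the ``For all $x$ there is an equal $y$ lemma'' almost verbatim, swapping the roles of component-$0$ and component-$1$ so that the inner existential test is again the already-built $M_{\exists=}$ and nothing conceptually new is needed. Concretely, I would apply the While Loop Lemma to a loop body that sweeps a candidate $\beta$ through the immediate children of the root of component-$1$ in $\prec$ order, and for each $\beta$ tests whether the subtree ${X_1}_\beta$ is literally equal to some ${X_0}_\alpha$; the loop exits the moment some $\beta$ fails this test, and otherwise runs to completion.

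First I would fix a finite number of scratch components — say $3,4,5$, besides the answer component $2$ — and work with markings of that fixed width throughout, so that the loop body preserves the number of components as the While Loop Lemma demands (a preprocessing step pads a $2$-component input out to this width, and a postprocessing step clears the scratch before halting). The loop body $M$ then: (i) advances a unique mark in component-$1$ by one step in $\prec$ order through the children of the root, moving it exactly as the head of $M_{end}$ or $M_{erase}$ moves, so that it converges correctly at limit stages via $\underline{\lim}$, and branches to a clean exit if the mark has run past the last child (a cell marked $0$); (ii) copies the subtree ${X_1}_\beta$ into component-$3$ using a relabeled instance of the Subtree Copy Lemma, and copies component-$0$ into component-$4$ with a relabeled $M_{copy}$; and (iii) runs a relabeled copy of $M_{\exists=}$ whose two input components have been renamed to our components $3$ and $4$, whose internal scratch has been renamed to $5$, and whose Boolean output is left in component-$2$. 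Since $M_{\exists=}$ applied to a pair $(A,B)$ decides whether $A=B_\alpha$ for some $\alpha$, this relabeled copy decides exactly whether ${X_1}_\beta = {X_0}_\alpha$ for some $\alpha$, and leaves that Boolean in component-$2$.

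The exit test $M_B$ is then just ``is the mark in cell $2^{\frown}0$ equal to $0$?''. The loop halts the first time this holds — i.e., at the first $\beta$ with no equal element in component-$0$, witnessing answer ``false'' — and otherwise runs through every child of the root of component-$1$ and halts with component-$2$ still carrying the ``true'' Boolean. To get the vacuous case right (component-$1$ encodes $\emptyset$, so there is no child to sweep) I would initialize component-$2$ to ``true'' before entering the loop and have the ``ran past the last child'' branch exit without touching that verdict. On exit, a cleanup phase erases the unique mark, wipes components $3,4,5$ (Erase Below Unique Mark Lemma at their roots, or a localized $M_{erase}$), restores components $0$ and $1$, returns the head to the $0$-th root, and leaves the answer in component-$2$, so the output is well formed. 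The outer loop always terminates, since sweeping $\beta$ is a strictly $\prec$-increasing recursion through a set of children; and each inner call to $M_{\exists=}$ terminates by the lemma already proved.

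I expect the only real obstacle to be bookkeeping: verifying that the unique marks and scratch components used at this level are disjoint from those used internally by $M_{\exists=}$, $M_{=}$, the Subtree Copy Lemma and the copy machines — which the fixed-shift relabeling is precisely designed to ensure — and checking that at a limit stage of the outer while loop the least-cofinal-state rule, together with $\underline{\lim}$ of the head positions and the pointwise tape limit, leaves the configuration at the top of the loop body, exactly as the While Loop Lemma is constructed to guarantee. The substantive content is simply the observation that the predicate to be decided is $M_{\exists=}$'s predicate evaluated at the pair $({X_1}_\beta,\, X_0)$; everything else is the same plumbing already used in the preceding lemmas.
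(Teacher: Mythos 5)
Your proposal is correct and follows the same route as the paper, which simply says the proof is ``similar to the previous proof,'' i.e., a while loop built around $M_{\exists=}$ with the roles of component-$0$ and component-$1$ interchanged. Your version just spells out the bookkeeping (scratch components, unique-mark sweeping, the vacuous case, cleanup) that the paper leaves implicit.
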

\begin{proof}
Similar to the previous proof. 
\end{proof}

\begin{definition}
Let $X$ have a single component. Then $X$ is well marked with $2$'s and canonical below $\eta$, if $X_{\eta}$
has only nonzero marks of $2$ for cells below the root, and marked with $1$ at the root., and furthermore for all $\nu$ and $\rho$ such that
$\eta\vartriangleleft\nu$ and $\eta\vartriangleleft\rho$, if 
$$
Decode(X_{\rho})=Decode(X_{\nu})
$$
then in fact 
$$
X_{\nu}=X_{\rho}
$$
are literally identical as markings. 
\end{definition}

\begin{lemma}(Local Canonicalization Lemma)
There is a set Turing machine $M_{LC}$ such that 
for any well formed tape marking with single component $X$ which is canonical and marked with $2$'s below some rank $\alpha$,
outputs a marking $Y=M_{LC}(X)$, Y is canonical and marked with $2$'s below and at rank $\alpha$.
\end{lemma}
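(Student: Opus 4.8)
The plan is to build $M_{LC}$ as an outer while loop, obtained from the While Loop Lemma, that walks a unique mark through the nodes of $X$ of rank exactly $\alpha$ in increasing $\prec$-order and, at each such node $\nu$, rewrites the subtree $X_\nu$ into canonical form. The point that makes this feasible is that every child of a rank-$\alpha$ node has rank strictly less than $\alpha$, so by the hypothesis on $X$ the subtrees hanging immediately below $\nu$ are \emph{already} canonical; in particular, for children $\nu^\frown i$ and $\nu^\frown j$ we have $Decode(X_{\nu^\frown i}) = Decode(X_{\nu^\frown j})$ iff $X_{\nu^\frown i}$ and $X_{\nu^\frown j}$ are literally identical markings. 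Hence canonicalizing $X_\nu$ reduces to three local tasks: (i) delete duplicate children and close the resulting gaps in the ordinal indexing so that the surviving children again occupy an initial segment $\nu^\frown 0, \nu^\frown 1, \ldots$ of addresses; (ii) permute these surviving children so that they appear in a fixed canonical order; (iii) recolour the non-root cells of $X_\nu$ with $2$'s, leaving a $1$ at $\nu$. After (i)--(iii) the marking $X_\nu$ depends on $Decode(X_\nu)$ alone, so when the loop later reaches another rank-$\alpha$ node $\rho$ with $Decode(X_\rho) = Decode(X_\nu)$ the resulting markings agree. Since everything done at stage $\nu$ takes place strictly below $\nu$, the already-canonical lower-rank material and the unique mark steering the outer loop are left intact, so on termination the output $Y$ is canonical and marked with $2$'s below and at rank $\alpha$.

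To implement the pieces I would proceed as follows. \emph{Recognising a rank-$\alpha$ node:} under the hypothesis the cells of rank $<\alpha$ carry $2$'s while the cells of rank $\ge\alpha$ still carry the raw mark $1$, so $\nu$ has rank exactly $\alpha$ precisely when $X(\nu)=1$ and a local traversal of the subtree below $\nu$ — a confined copy of $M_{traverse-2}$ bounded by an end marker at the supremum of the addresses immediately below $\nu$, read only for its Boolean verdict — sees no $1$. This is a set-Turing-decidable Boolean, and the same local traversal advances the unique mark to the next rank-$\alpha$ node in $\prec$-order, which is exactly the data the While Loop Lemma requires. \emph{Comparing two canonical child subtrees:} given $\nu^\frown i$ and $\nu^\frown j$, we locate the $\prec$-least address at which their markings differ by walking two unique marks in lock-step through $X_{\nu^\frown i}$ and $X_{\nu^\frown j}$ in $\prec$-order, using the two-unique-marks bookkeeping from the proof of the Subtree Copy Lemma but confined to the subtree at $\nu$, and reporting which marking first has a $0$ where the other has a nonzero mark (equality being detected by the Equality Lemma applied locally); since $^{<\omega}ON$ is well ordered by $\prec$ and a canonical marking is determined by its set of marked addresses, this yields a canonical linear order on the children. \emph{Deleting and shifting:} to remove a child we apply the Erase Below Unique Mark Lemma there and then, with a transfinite inner while loop built from the Subtree Copy Lemma, slide each higher-indexed sibling down by one address; the same move-by-subtree-copy primitive, driven by the comparison above, realises (ii). \emph{Recolouring} is a final local pass of a confined copy of $M_{traverse-2}$ below $\nu$.

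The main obstacle is task (ii) — arranging the children of $\nu$ in canonical order when there may be an ordinal's worth of them — together with the verification that none of the deleting, sliding, and recolouring ever perturbs the rank-$<\alpha$ canonical structure elsewhere or the outer loop's unique mark. For the first, the distinct children of $\nu$ form a \emph{set} of canonical markings that the comparison above well-orders, hence of order type some ordinal $\kappa$, so I would fill the addresses $\nu^\frown 0, \ldots, \nu^\frown\beta, \ldots$ one at a time by repeatedly locating, via a nested while loop, the child with exactly $\beta$ predecessors in that order. The second is guaranteed by the discipline of keeping every cell touched at stage $\nu$ strictly below $\nu$, so that correctness of each sub-step follows directly from the While Loop, Composition, Subtree Copy, Erase Below Unique Mark, and Equality Lemmas already available.
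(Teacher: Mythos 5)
Your overall strategy differs from the paper's. You normalize each rank-$\alpha$ node independently into a canonical shape (deduplicate children, sort them into a fixed order, recolour), so that equal-decoding nodes agree because each is a function of its decode alone. The paper instead never changes the tree shape at all: it enumerates \emph{pairs} of rank-$\alpha$ nodes in the $\prec$-induced order, tests extensional equality of the two decoded sets with $M_{\forall x\exists y\,x=y}$ and $M_{\forall y\exists x\,x=y}$ (which reduces to literal comparison of children precisely because of the lower-rank canonicity you also identify as the key fact), and when the test succeeds simply overwrites the $\prec$-later subtree with a literal copy of the $\prec$-earlier one via the Subtree Copy Lemma. That buys a lot: no deletion, no re-indexing of children, no sorting, and trivial convergence at limits, since once a node has been overwritten by the $\prec$-first representative of its decode, every later overwrite writes the same marking.

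The genuine gap in your version is the delete-and-shift step for removing duplicate children. First, after a deletion the surviving children must again occupy an initial segment of ordinal addresses (condition (2) of a basic code), and for a transfinite family of children the correct operation is an order-collapse, not ``slide each higher-indexed sibling down by one.'' More seriously, when there are infinitely many duplicates the outer deletion loop has limit stages at which the pointwise limit of the tape need not exist or need not be correct: if, say, the first $\omega$ children are all duplicates of the child at position $\omega$, then the cell block at $\nu^\frown 0$ successively holds copies of the original children $1,2,3,\ldots$, and the $\lim_{\gamma\rightarrow\omega}X_\gamma$ required by Definition \ref{configurations} is the cellwise limit of infinitely many distinct subtree markings, which can fail to exist, at which point $M_{LC}(X)$ is undefined. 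Your own step (ii) (transfinite selection into fresh addresses of the child with exactly $\beta$ predecessors in your comparison order) would actually subsume step (i) and avoid this, but as written the proposal routes the deduplication through the failing shift loop, and the transfinite selection sort itself is a substantial piece of unbuilt machinery (scratch components, stabilization of the target cells at limit stages) that needs its own convergence argument. I would either repair the construction along those lines or adopt the paper's pairwise-overwrite scheme, which needs none of it.
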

\begin{proof}
We begin by marking all nodes whose successors are all marked with $2$'s with a raised $^*$. Then we consider according to
the lexicographic ordering of  pairs induced by $\prec$ the set of such pairs. 
For each such pair we run $M_{\forall x\exists y\,x=y}$ and $M_{\forall y\exists x\,x=y}$ on the copies of the pair
(in component-$1$ and component-$2$). If both return a $1$, then we replace marking below the last node of the pair (according
to the $\prec$ order) with markings of the first node of the pair. Once all such pairs of nodes have been processed the output
has the properties we seek, canonicalization for all nodes of rank $\alpha$.  To finish, we replace the marks with a raised star,
$^*$ with $2$'s. 
\end{proof}

\begin{lemma}(Canonicalization Lemma)
There is a set Turing machine $M_C$ such that for any well formed tape marking with single component $X$, with no
marks of $2$, outputs a marking $\tilde{X}=M_C(X)$ such that if $\nu$ and $\eta$ are two tape positions such that
$$
Decode(\tilde{X}_{\nu})=Decode(\tilde{X}_{\eta})
$$
then in fact 
$$
\tilde{X}_{\nu}=\tilde{X}_{\eta}
.$$
\end{lemma}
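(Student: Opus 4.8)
The plan is to canonicalize the tree below the root one rank at a time, from the leaves upward, by wrapping the machine $M_{LC}$ of the Local Canonicalization Lemma in a while loop. First I would build a Boolean machine $M_B$ that reports ``completed'' exactly when every tree cell other than the root of its input is marked with a $2$, and ``not yet'' otherwise: preprocess the input with $M_{end}$, scan the tree in $\prec$-order with a traverse-style machine, and flag as soon as a $1$ is encountered in a cell other than the root. Then I would apply the While Loop Lemma with loop body $M_{LC}$ and exit test $M_B$ (negating $M_B$ if necessary to match the Lemma's convention) to obtain $M_C$; the hypotheses hold because $M_{LC}$ sends single-component markings to single-component markings and $M_B$ computes a Boolean.

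The substance of the proof is to show that, starting from the hypothesised $X$ (single component, no marks of $2$), the transfinite iteration $X_0=X$, $X_{\gamma+1}=M_{LC}(X_\gamma)$, $X_\lambda=\lim_{\gamma\to\lambda}X_\gamma$ is everywhere defined and satisfies the invariant: $X_\gamma$ is well formed, $Decode(X_\gamma)=Decode(X)$, every tape position of rank $<\gamma$ carries a $2$, and $X_\gamma$ is canonical below rank $\gamma$, i.e.\ any two tape positions of rank $<\gamma$ with equal $Decode$ have literally equal induced markings. The base case is the hypothesis on $X$. The successor step is exactly the Local Canonicalization Lemma: the invariant at $\gamma$ is its precondition ``canonical and marked with $2$'s below rank $\gamma$'', and its conclusion ``below and at rank $\gamma$'' is the invariant at $\gamma+1$. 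At a limit stage $\lambda$ one must check that for each fixed cell the values $X_\gamma(\text{cell})$ are eventually constant as $\gamma\to\lambda$, so that $X_\lambda$ exists, and that $X_\lambda$ then satisfies the invariant at $\lambda$. This hinges on the observation that once $M_{LC}$ has processed a given rank, the subtree strictly below any node of that rank is a fixed marking depending only on the set that node decodes (same-decode nodes having been overwritten by a common representative), so the copying $M_{LC}$ performs at higher ranks no longer alters it. Pinning down this idempotence by unwinding exactly what $M_{LC}$ rewrites is the main obstacle.

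Granting the invariant, termination and correctness follow. Let $\delta$ be the rank of $X$; since the code's rank function preserves set-theoretic rank, $\delta$ is the rank of $Decode(X)$, and every non-root node of the code decodes to a member of $trcl(Decode(X))$, hence has rank $<\delta$. Since $M_{LC}$, run at the stage where ranks below $\gamma$ are already $2$-marked, alters only cells of rank $\leq\gamma$, a non-root cell of $X_\gamma$ carries a $2$ precisely when its rank is $<\gamma$; consequently $M_B$ reports completion at stage $\delta$ and at no earlier stage, the root of $X_\delta$ still carries a $1$ while every non-root cell carries a $2$, the loop halts with $M_C(X)=X_\delta=:\tilde X$ (head at the root, in the halt state), and $M_C(X)$ is defined for every such $X$. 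By the invariant at $\gamma=\delta$, $\tilde X$ is well formed, $Decode(\tilde X)=Decode(X)$, and any two non-root positions of $\tilde X$ with equal $Decode$ have equal induced markings.

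It remains to dispose of the cases of the conclusion where $\nu$ or $\eta$ equals the root $\langle\rangle$. If $Decode(\tilde X_{\langle\rangle})=Decode(\tilde X_\eta)$ with $\eta\neq\langle\rangle$, then the set $x=Decode(\tilde X)$ would belong to $trcl(x)$, yielding a finite $\in$-cycle, which the Axiom of Foundation forbids; hence this case cannot occur, while $\nu=\eta=\langle\rangle$ is trivial. Therefore for all tape positions $\nu$ and $\eta$, $Decode(\tilde X_\nu)=Decode(\tilde X_\eta)$ implies $\tilde X_\nu=\tilde X_\eta$, as required.
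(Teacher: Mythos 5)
Your proposal follows essentially the same route as the paper: an initialization pass followed by the While Loop Lemma wrapped around $M_{LC}$, iterating rank by rank up to the rank of $X$. You supply considerably more detail than the paper does (the explicit Boolean exit test, the transfinite invariant, the limit-stage stabilization argument, and the Foundation argument for the root case), but the underlying construction is the same.
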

\begin{proof}
We begin an initialization procedure by labeling all nodes $\eta$ 
such that all successor node $\eta\vartriangleleft\nu$ of $\eta$ having a mark of $0$, 
$X(\eta)=0$, with a mark of $2$. Let $X_0$ be this new marking. Notice that $X_0$ is canonical and marked with $2$'s 
at rank $0$. By the While Loop Lemma, by forming a loop around the set Turing machine $M_{LC}$, we can produce a transfinite sequence
$\langle X_{\alpha}\mid \alpha\leq \alpha^*\rangle$ such that $X_{\alpha}$ is canonical and well marked with $2$'s below and at $\alpha$ where $\alpha^*$ is the rank of $X$. Denoting $M_C$ as the composition of the initialization with the While Loop Lemma, then
$$M_C(X)=X_{\alpha^*}=\tilde{X}.$$
\end{proof}

\noindent For a marking $X$, we use the notation $\tilde{X}$ for its canonicalization.

\begin{lemma}(Pair Lemma)
Let $X=(X_0,X_1)$ be a well formed marking with two components. Then there are set Turing machines, $M_{pair}$ and
$M_{opair}$,
which on input $X$ outputs $Y=(Y_0,Y_1,Y_2)$ and $Z=(Z_0,Z_1,Z_2)$ respectively, such that
\begin{enumerate}
\item $Z_0=Y_0=X_0$
\item $Z_1=Y_1=X_1$
\item $Decode(Y_2)=\{Decode(Y_0,Y_1)\}$
\item $Decode(Z_2)=(Decode(Z_0),Decode(Z_1))$.
\end{enumerate}
\end{lemma}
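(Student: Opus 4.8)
The plan is to realize both $M_{pair}$ and $M_{opair}$ as fixed finite compositions, via the Composition Lemma, of three kinds of elementary steps: (a) navigating to a hard-coded standard address (one whose ordinal entries are all $0$ or $1$) using only the moves $z,+,j^+,j^-$; (b) planting or erasing a single mark there, including the $\star$-type unique marks demanded by the Subtree Copy Lemma; and (c) invoking the evident many-component version of the Subtree Copy Lemma, or of $M_{erase}$, on a nominated pair of components. The excerpt already remarks that $M_{copy}$, $M_{erase}$ and the subtree-copy machine generalize to act on any chosen component by inserting jumps, so I take that generalization as given; the only genuinely new content is the bookkeeping of which subtrees get copied where.

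For $M_{pair}$, whose output component $2$ is to satisfy $Decode(Y_2)=\{Decode(X_0),Decode(X_1)\}$: starting from the $0$-th root, the machine jumps to component $2$ (all of whose cells are $0$, since the input has two components) and writes a $1$ at $2^\frown\langle\rangle$, at $2^\frown 0$ and at $2^\frown 1$. It then marks the root $0^\frown\langle\rangle$ of component $0$ with $1^\star$ and the cell $2^\frown 0$ with $1^\star$, applies the Subtree Copy Lemma to copy the part of $X_0$ strictly below its root onto the part of component $2$ strictly below $2^\frown 0$, restores those two cells to $1$, and then repeats with unique marks at $1^\frown\langle\rangle$ and $2^\frown 1$ to copy the part of $X_1$ below its root onto the part of component $2$ below $2^\frown 1$. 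After erasing any remaining $\star$-type marks it returns the head to the $0$-th root and halts. One checks that the support of the new component $2$ is a basic code (closed under initial segments; the root's children are indexed by the initial segment $\{0,1\}$ of $ON$, and no basic-code condition forbids the two child subtrees from being identical, which is exactly what happens when $x_0=x_1$), so the output is well formed; unwinding the recursion defining $decode_X$ makes $decode$ of $2^\frown 0$ equal to $Decode(X_0)$ and $decode$ of $2^\frown 1$ equal to $Decode(X_1)$, hence $Decode(Y_2)=\{Decode(X_0),Decode(X_1)\}$. Components $0$ and $1$ are never written, so $Y_0=X_0$ and $Y_1=X_1$.

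For $M_{opair}$ I would use the Kuratowski encoding $(x_0,x_1)=\{\{x_0\},\{x_0,x_1\}\}$ together with two scratch components $3$ and $4$. The machine first builds a code for $\{x_0\}$ in component $3$ (write a root, one child $3^\frown 0$, subtree-copy $X_0$ below it), then builds a code for $\{x_0,x_1\}$ in component $4$ exactly as in $M_{pair}$, then builds a code for $\{\text{component }3,\ \text{component }4\}$ in component $2$ by the same routine (write a root, two children, perform two subtree-copies), now copying out of components $3$ and $4$; finally it runs generalized $M_{erase}$ on components $3$ and $4$, returns the head to the $0$-th root and halts. The well-formedness check is the $M_{pair}$ check applied three times, and the decode computation gives $Decode(Z_2)=\{\{Decode(X_0)\},\{Decode(X_0),Decode(X_1)\}\}=(Decode(Z_0),Decode(Z_1))$, with components $0$ and $1$ again untouched. (Should the intended reading of the statement instead be $Decode(Y_2)=\{(Decode(X_0),Decode(X_1))\}$, one simply runs $M_{opair}$ first and then forms the singleton of its output component; either way the construction has the same shape.)

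The main obstacle is discipline rather than ideas: one must be sure that the many-component subtree-copy and erase primitives really do leave the components they are not operating on unchanged — in particular that they never disturb the $\star\star$ end-delimiters or the basic-code structure elsewhere — and that the short hard-coded navigation, together with the planting and later cleanup of unique marks, is arranged so that no component is left in an ill-formed intermediate state at the instant the machine passes through a limit step. All of this follows from the Erase Below Unique Mark, Subtree Copy and Composition Lemmas (and, inside those, the While Loop Lemma) already available, so once the generalization of the copy/erase machines to an arbitrary component is granted, the proof reduces to checking the decode recursion in the three cases above.
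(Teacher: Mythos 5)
The paper states the Pair Lemma with no proof at all, so there is nothing to compare your argument against; judged on its own, your construction is correct and is almost certainly what the author intended. Reading item (3) as the unordered pair $\{Decode(X_0),Decode(X_1)\}$ (or, on the literal reading of the paper's $Decode(Y_0,Y_1)$ as a tuple, the singleton of the ordered pair, which you also cover) and item (4) as the Kuratowski pair is the only sensible interpretation, since $Decode$ of a single component is a set. Your reduction to hard-coded navigation, planting of roots and children at $2^\frown 0$ and $2^\frown 1$, and two applications of the Subtree Copy Lemma is exactly the right decomposition, and your well-formedness check (children of each node form an initial segment of $ON$; duplicate subtrees are harmless because $decode$ forms a set) is the part that actually needs saying. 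Two caveats, both of which you already flag and which are consistent with the paper's own level of rigor: the Subtree Copy Lemma is stated only for a two-component marking with the unique marks in components $0$ and $1$, so you are relying on the paper's informal remark that such machines ``are easily generalized to work on any desired component'' to copy between, say, components $0$ and $2$ of a five-component tape while leaving the others untouched; and the claim that no limit step is ever reached with a component in an ill-formed intermediate state is inherited from the While Loop and Subtree Copy Lemmas rather than proved. Neither is a gap by the standards of this paper.
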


\begin{lemma}(Pairing Lemma)
There is a set Turing machine $M_{pairing}$, such given a well formed input $X=(X_0,X_1)$ with two components,
such that if $\alpha_0$ is the least ordinal such that such that $X_0(\alpha_0)=0$, and $\alpha_1$ is the least
ordinal such that $X_1(\alpha_1)=0$, and if $\alpha_0=\alpha_1$, then the machine outputs a marking $Y$ with
three components $(Y_0,Y_1,Y_2)$ such that 
\begin{enumerate}
\item $X_0=Y_0$
\item $X_1=Y_1$
\item $Decode(Y_2)$ is a one to one mapping of $Decode(X_0)$ to $Decode(X_1)$. 
\end{enumerate}
\end{lemma}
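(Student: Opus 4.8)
\emph{Proof proposal.} The plan is to have $M_{pairing}$ build the third component $Y_2$ so that it encodes the relation matching, for each ordinal $\beta<\alpha_0$, the $\beta$-th child of the root of $X_0$ with the $\beta$-th child of the root of $X_1$, while leaving the first two components untouched so that $Y_0=X_0$ and $Y_1=X_1$ literally. Since $\alpha_0=\alpha_1$, both $X_0$ and $X_1$ have a $\beta$-th child for exactly the ordinals $\beta<\alpha_0$, so this correspondence is defined on all of the presentation of $Decode(X_0)$ and lands inside the presentation of $Decode(X_1)$; matching the two presentations index by index is the one-to-one mapping asserted in the lemma. It is convenient to first compose with the Canonicalization Lemma applied to each of the two input components, so that subtrees decoding to the same set are literally identical markings, which is what makes this index-by-index matching cohere.

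First I would install position markers: using the unique-mark facility, place a $1^{\star}$ at $0^{\frown}0$, a $1^{\star}$ at $1^{\frown}0$ (both cells originally carrying a $1$), and a $0^{\star}$ at $2^{\frown}0$, all three pointing to the ``current index $\beta$''. The body of a while loop then performs, for the currently marked index $\beta$: (i) by a relocated instance of the Subtree Copy Lemma, copy the subtree of component $0$ hanging below and including the marked cell onto a scratch component $3$ (as the tree hanging below its root), and likewise copy the subtree of component $1$ below its marked cell onto scratch component $4$; (ii) run a copy of $M_{opair}$ relocated to act on components $3,4$ and to place its third output into scratch component $5$, so that $Decode$ of component $5$ is the ordered pair $\bigl(Decode((X_0)_{0^{\frown}\beta}),\,Decode((X_1)_{1^{\frown}\beta})\bigr)$; (iii) by the Subtree Copy Lemma again, transplant the tree in component $5$ so that it hangs below and including the marked cell of component $2$; (iv) erase scratch components $3,4,5$ via the Erase Below Unique Mark Lemma; (v) advance all three position markers one step in $\prec$, from $i^{\frown}\beta$ to $i^{\frown}(\beta{+}1)$, re-establishing the unique mark in component $2$ at the cell parallel to the new marker of component $0$ using the $j^{+}$ device of $M_{copy}$. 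The loop's Boolean $M_B$ returns $0$ exactly when the currently marked cell of component $0$ carries a $0$, i.e.\ when the children of $X_0$ are exhausted. Applying the While Loop Lemma to this body and Boolean with $n=3$, and then composing with a cleanup pass that restores every $k^{\star}$ to $k$ in all three components and strips the start/end delimiters, yields $M_{pairing}$. On exit, $Decode(Y_2)=\{(Decode((X_0)_{0^{\frown}\beta}),\,Decode((X_1)_{1^{\frown}\beta})):\beta<\alpha_0\}$, which is the desired mapping.

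The main obstacle is the finite-component bookkeeping rather than any limit-stage subtlety. Within each iteration we must localize two arbitrary subtrees, feed them to $M_{opair}$ (itself a machine that consumes a two-component input and returns a three-component output), and splice the result into the growing third component, all while keeping components $3,4,5$ empty at the iteration boundaries so that the ``preserves the number of components'' hypothesis holds and the While Loop Lemma's own internal use of components $n$ through $2n-1$ does not collide with ours. Correctly manipulating the $^{\star}$-superscripted unique marks — so that the Subtree Copy and Erase-Below-Unique-Mark machines apply cleanly at each step, and so that the cleanup pass reproduces the inputs exactly — is the bulk of the routine but fiddly work. Behavior through limit ordinals $\beta$ comes for free: each cell of component $2$ lies in the subtree placed for some child index $\gamma$, which is written once during the iteration handling $\gamma$ and never altered afterwards, so all pointwise limits and the $\underline{\lim}$ of the head position exist and the loop proceeds correctly.
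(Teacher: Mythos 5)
The paper states the Pairing Lemma without any proof at all, so there is no argument of record to compare yours against; your construction --- canonicalize, then run a while loop that walks the children of the two roots in parallel, calls a relocated $M_{opair}$ on scratch components, and splices each ordered pair into component $2$ --- is exactly the kind of assembly the surrounding lemmas (Subtree Copy, Erase Below Unique Mark, While Loop) are set up to support, and your component bookkeeping is plausible.

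There is, however, a genuine gap in the correctness claim: index-by-index matching of children does not in general produce a one-to-one mapping. The hypothesis $\alpha_0=\alpha_1$ only says the two roots have the same number of immediate children; it does not say those children decode to pairwise distinct sets. If two child positions $0^\frown\beta$ and $0^\frown\beta'$ of $X_0$ decode to the same set $u$ while the corresponding children of $X_1$ decode to different sets, then your $Decode(Y_2)$ contains two pairs with first coordinate $u$ and is not even a function; dually, duplicates among the children of $X_1$ destroy injectivity. Composing with the Canonicalization Lemma does not repair this: canonicalization makes subtrees with equal decodes literally identical as markings, but it does not delete the duplicate occurrences, so the colliding indices are still both present and still both get paired. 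To close the gap you must either insert a de-duplication pass (e.g., using $M_{\exists=}$ to mark and skip any child that repeats an earlier one --- at which point $\alpha_0=\alpha_1$ no longer guarantees the two surviving lists have the same length) or strengthen the hypothesis to require that distinct children of each root decode to distinct sets; the latter is tacitly how the lemma is used in the Canonical Well Ordering by an Ordinal Lemma, where the second component is a canonical ordinal $\alpha_C$ whose children are automatically distinct. A smaller point: your claim that limit stages ``come for free'' is true for component $2$ but not for the three $^{\star}$ position markers, which are erased and re-created at every successor step and therefore vanish in the pointwise limit of the markings; re-establishing the marker at the limit index has to be driven by the $\underline{\lim}$ of the head position and the least-cofinal-state rule, as in $M_{end}$.
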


\noindent Note that the machine $M_C$ does not make representations of arbitrary sets via marks globally canonical. However,
for ordinals, there is a notion of canonical representation. These will be important in our proof that REC $\subseteq$ TUR.

\begin{definition}
Let $\alpha$ be an ordinal. By induction on $\alpha$, we define the canonical representative (canonical name) for $\alpha$, denoted
$\alpha_C$. 
\begin{enumerate}
\item $0_C$ is the marking whose only nonzero marking is at cell $0$, i.e., 
$$0_C(0)=1.$$
\item $1_C$ is the marking whose only nonzero markings are at cells $0$ and $0^\frown 0$, 
$$1_C(0)=1,\ \ 1_C(0^\frown0)=1.$$
\item In general, we let $\alpha_C$ be the name such that
$$
\alpha_C(0\frown \beta)=\beta_C
$$
for $\beta<\alpha$ and
$$
\alpha_C(0\frown \alpha)=0.
$$
\end{enumerate}
\end{definition}

\begin{lemma}(Canonical Successor Lemma)
There is a set Turing machine, $M_{succ}$ such that for any ordinal $\alpha$, if $M_{succ}$ is given  input
$\alpha_C$, then the  output is ${\alpha+1}_C$.
\end{lemma}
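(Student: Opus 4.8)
The plan is to build $M_{succ}$ as a composition of machines already available: one that saves a scratch copy of the input $\alpha_C$ in an auxiliary component, one that marks the "new child" slot of the root of the first component, one that splices the scratch copy of $\alpha_C$ into that slot, and a final clean-up pass. I will treat the one-component input $\alpha_C$ as a marking with two components whose second component is initially blank; this is harmless, since a marking with one component is in particular a marking with two components.

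Concretely, given $\alpha_C$, I would first preprocess component $0$ with $M_{end}$ and run the (component-$0$-to-component-$1$) version of $M_{copy}$, so that after converting the auxiliary $\star,\star\star$ marks back to ordinary marks the configuration has components $(\alpha_C,\alpha_C)$. Using $M_{end}$ again on component $0$ I locate the cell $0^{\frown}\alpha$ --- the least child of the root of component $0$ whose mark is $0$ --- and place a unique mark $0^{\star}$ there; I also place a unique mark $1^{\star}$ at the root of component $1$. Now I apply the Subtree Copy Lemma, with the roles of the two components interchanged (an immediate variant of the stated lemma, obtained by rerunning its proof or by conjugating with a component-swap machine), to erase component $0$ below $0^{\frown}\alpha$ and copy there the subtree of component $1$ rooted at its unique mark, i.e.\ a copy of $\alpha_C$. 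After this the subtree of component $0$ rooted at $0^{\frown}\alpha$ is $\alpha_C$, except that the cell $0^{\frown}\alpha$ itself carries the copied root mark $1^{\star}$. The final pass replaces every starred mark $k^{\star}$ by $k$, erases component $1$, moves the head to the $0$-th root, and halts.

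To verify correctness I would compare the output's single component with the recursive definition of $(\alpha+1)_C$. The only cells of component $0$ that were touched are $0^{\frown}\alpha$ and the cells strictly below it, and in the output these constitute exactly the subtree $\alpha_C$; the cells $0^{\frown}\beta$ with $\beta<\alpha$, together with the subtrees below them, were never altered, so they still equal $\beta_C$; and $0^{\frown}\gamma$ remains blank for $\gamma>\alpha$. Since $(\alpha+1)_C$ is by definition the name whose root cell is $1$, whose $0^{\frown}\beta$-subtree is $\beta_C$ for every $\beta<\alpha+1$, and whose cell $0^{\frown}(\alpha+1)$ is blank, this is exactly the output. The case $\alpha=0$ is not exceptional: the scratch copy is then $0_C$, a single $1$-marked leaf, and the procedure yields $1_C$.

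I expect the real work to be bookkeeping rather than anything conceptual: one must ensure the splice touches the cell $0^{\frown}\alpha$ and nothing else, and that all of the auxiliary $\star$, $\star\star$, and $k^{\star}$ marks introduced along the way by $M_{end}$, $M_{copy}$, and the Subtree Copy Lemma are tracked and ultimately erased, so that $M_{succ}(\alpha_C)$ has range $\{0,1\}$ and is a genuine Turing name. The only structural ingredient not literally among the earlier lemmas is the component-interchanged Subtree Copy Lemma, which is routine. No fresh analysis of limit stages is required, because all of the transfinite iteration is confined to the machines $M_{copy}$ and the one furnished by the Subtree Copy Lemma, whose behavior at limits has already been established.
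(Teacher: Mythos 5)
Your proposal is correct and follows essentially the same route as the paper's proof: copy the input $\alpha_C$ to an auxiliary component, locate the least blank cell $0^{\frown}\alpha$, place a $1$ there and splice the saved copy below it, then erase the auxiliary component. You merely make explicit (via $M_{end}$, $M_{copy}$, unique marks, and the Subtree Copy Lemma) the bookkeeping that the paper's three-line sketch leaves implicit.
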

\begin{proof}
$M_{succ}$ first copies the input $X$ to component-$1$. Let $\alpha$ be the first
cell of the form $0^\frown\alpha$ such that $X(0^\frown \alpha)=0$. Then it places a $1$ in cell $0^\frown\alpha$
and copies component-$1$ onto the space below $0^\frown\alpha$. Lastly, we erase component-$1$, so the output
is ${\alpha+1}_C$ if the input $X$ is $\alpha_C$ for some $\alpha$. 
\end{proof}

\begin{lemma}(Canonical Ordinal Lemma)
There is a set Turing machine $M_{\alpha_C}$, that on input an representation $X$ of a set $x$,
outputs the canonical representation of an ordinal $\alpha_C$ for some ordinal $\alpha$ which has the
same cardinality as $x$. 
\end{lemma}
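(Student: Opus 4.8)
The plan is to compute $\alpha_C$ where $\alpha = |\mathrm{trcl}(\{x\})|$ (or equivalently the cardinality of $x$ itself; these agree up to the constructions available), by the following stages: first make a canonical copy of the hereditary structure of $x$, then well-order its elements, then transfer that well-order into an ordinal representation, and finally collapse it to a cardinal by a standard bijection-search. Concretely, I would first apply the Canonicalization Lemma to obtain $\tilde X = M_C(X)$, so that distinct subtrees of $\tilde X$ really do decode to distinct sets; this is the feature that lets the machine "count without double counting." Using the Pairing Lemma together with the while-loop machinery, I would then search for the least ordinal $\beta$ (built up one successor at a time via $M_{succ}$, passing through limits by the $\varliminf$ behavior of the head and the pointwise limit of the tape) such that some component $Y_2$ decodes to a bijection between $\beta_C$ and $x$ — more precisely, between the set of immediate subtrees of $\tilde X$ and an initial segment of the ordinals. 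The existence of such a $\beta$ is exactly the assertion that $x$ can be well-ordered, which is available from the Axiom of Choice; the machine does not guess the choice function but instead searches exhaustively, and the search terminates because $|x|$ is a bona fide ordinal $\alpha^\ast$ and the loop's exit condition (tested by a Boolean machine of the form $M_{\forall x\exists y\,x=y}$ combined with $M_{pairing}$) fires at stage $\alpha^\ast$.

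In more detail, the main loop would, at stage $\beta$, (i) hold $\beta_C$ on some component, (ii) attempt to build a one-to-one map from the immediate subtrees of $\tilde X$ onto $\beta_C$ — this is where $M_{pairing}$ is invoked, since $M_{pairing}$ already produces a bijection between two components when their "lengths" match, and the canonicalized subtrees of $\tilde X$ can be enumerated in $\prec$-order — and (iii) test via a Boolean submachine whether this partial map is in fact onto $x$'s elements and injective. If the test succeeds we exit with $\beta_C$; otherwise we apply $M_{succ}$ and continue. The While Loop Lemma (with $M$ the "advance $\beta$ and rebuild the attempted bijection" machine and $M_B$ the above test) then packages this into a single set Turing machine, and its output on input a representation of $x$ is $\alpha_C$ where $\alpha$ is the least ordinal equinumerous with $x$, i.e.\ $|x|$.

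The hard part, and the step I would expect to absorb most of the work, is implementing inside a set Turing machine the bijection-building-and-testing subroutine uniformly in $\beta$: the machine must, at each stage, locate the canonical subtrees of $\tilde X$ (finitely describable via the $\prec$-order and the end-markers from $M_{end}$), pair them off against the $0^\frown\gamma$-indexed entries of $\beta_C$ for $\gamma < \beta$, and then verify surjectivity and injectivity using the already-established machines $M_{=}$, $M_{\exists=}$, $M_{\forall x\exists y\,x=y}$ and $M_{\forall y\exists x\,x=y}$, all while preserving the number of components so that the While Loop Lemma applies. A secondary subtlety is limit behavior: one must check that at a limit stage $\lambda$ the accumulated partial bijection on the tape converges pointwise (so $X_\lambda$ is defined) and that the running copy of $\beta_C$ limits to $\lambda_C$; this follows from the fact that $\gamma_C$ for $\gamma < \lambda$ sit as literal subtrees of $\lambda_C$, so the pointwise $\lim$ of the $\beta_C$-components is exactly $\lambda_C$. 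Once these mechanics are in place, correctness is immediate: the loop exits precisely at the least $\beta$ with a bijection, which is $|x|$ by definition, and the output is $\beta_C$ as required.
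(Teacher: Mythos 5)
Your proposal aims higher than the lemma asks and, in doing so, leans on a step that the paper's machinery does not support. The lemma only requires \emph{some} ordinal $\alpha$ with $|\alpha|=|x|$, not the least such ordinal (the cardinal $|x|$). The paper's proof exploits this: since a basic code for $x$ is an isomorphic copy of $(trcl(\{x\}),\in)$, the immediate children of the root occupy the cells $0^\frown\beta$ for $\beta$ ranging over an initial segment of the ordinals that is already in bijection with the elements of $x$. So the machine just places an end mark $\star\star$ at the first cell $0^\frown\alpha$ with $X(0^\frown\alpha)=0$ and runs a while loop around $M_{succ}$ until the head reaches that mark; the ordinal $\alpha$ is read directly off the tape, and no canonicalization, pairing, or bijection search is needed.

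The genuine gap in your version is the exit condition: at stage $\beta$ you need a Boolean submachine deciding whether $\beta$ is equinumerous with $x$. None of the cited machines does this. $M_{pairing}$ only produces a bijection when the two components already have the same top-level length $\alpha_0=\alpha_1$ --- it does not search for a bijection between sets presented with different lengths --- and $M_{=}$, $M_{\exists=}$, $M_{\forall x\exists y\,x=y}$ test equality of elements, not equinumerosity; deciding ``there exists a bijection'' would require a search over all pairings, for which no tool is provided. You flag this as ``the hard part,'' but it is not a detail to be absorbed later: it is the entire difficulty, and it is avoidable. A secondary slip: $|trcl(\{x\})|$ and $|x|$ are not equivalent in general (take $x=\{\omega\}$), though your construction ultimately works with the immediate subtrees, i.e., with $x$ itself. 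If you drop the minimization and simply take $\beta$ to be the order type of the root's children --- which the tape hands you for free --- your while loop around $M_{succ}$ collapses to exactly the paper's proof.
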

\begin{proof}
We begin by letting $\alpha$ be the strict sup of all $\beta$ such that $X(0^\frown\beta)=1$. We place
an end mark of $**$ on cell $0^{\frown}\alpha$. 
Let $M_{succ}$ be the Turing machine from the previous lemma. 
Using the While Loop Lemma around the machine $M_{succ}$ we can compute the sequence of markings
$$
\langle \beta_C\mid \beta<\alpha^*\rangle
$$
where $\alpha^*=\alpha$, halting the loop when machine head reaches the $**$ node. 
\end{proof}

\begin{lemma}(Canonical Well Ordering by an Ordinal Lemma)
There is a set Turing machine $M_{cwo}$, that on input a representation $X$ of a set $x$, outputs
a marking $X_{cwo}$ which represents a well ordering of $x$ by an ordinal.
\end{lemma}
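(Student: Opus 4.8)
\noindent The plan is to exploit the fact that a representation $X$ of $x$ already carries a well ordering of $x$, up to repetitions: reading the top-level cells $0^\frown\beta$ of $X$ in increasing order of $\beta$ enumerates the elements $decode_X(0^\frown\beta)$ of $x$, possibly listing some of them more than once. So it suffices to (i) pass to an enumeration of $x$ in which each element occurs exactly once, of some length $\gamma$, (ii) produce the canonical name $\gamma_C$, and (iii) repackage that enumeration as a Turing name for a bijection $f\colon\gamma\to x$. Steps (ii) and (iii) come directly from the Canonical Ordinal Lemma and the Pairing/Pair Lemmas; the real work is step (i). (If $X$ already happens to be injective, in the sense that $\beta\mapsto decode_X(0^\frown\beta)$ is one-to-one on the marked top-level cells — which is the case for codes arising from a well ordering of $trcl(x)$ as in the introduction — step (i) is vacuous.)

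For step (i) I would first apply $M_C$, so that any two subtrees of $X$ with equal decode become literally identical markings; thereafter equality at the top level can be tested syntactically with $M_{=}$. The deduplication itself is then run through the While Loop Lemma. The loop-carried state is a clean marking with four components: component $0$, the canonicalized input $X$, untouched; component $1$, the partial output, i.e. the distinct elements found so far copied into consecutive cells $0^\frown 0,\dots,0^\frown(\delta-1)$; component $2$, the canonical ordinal name $\beta_C$ recording how far the input has been scanned; component $3$, the canonical ordinal name $\delta_C$ recording the next free output cell. One iteration of the core machine reads $\beta$ from component $2$, plants (by a bounded sub-routine guided by $\beta_C$) a unique mark at cell $0^\frown\beta$ of component $0$, stages the relevant subtrees into scratch components with the Subtree Copy Lemma, and runs $M_{\exists =}$ to decide whether $X_{0^\frown\beta}$ already occurs among the subtrees of component $1$; if not, it uses the Subtree Copy Lemma to copy $X_{0^\frown\beta}$ below cell $0^\frown\delta$ of component $1$ and advances $\delta_C$ to $(\delta+1)_C$ with $M_{succ}$; in all cases it advances $\beta_C$ to $(\beta+1)_C$ and clears the unique marks and scratch components, so each iteration begins and ends with a clean marking. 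The controlling Boolean reports ``stop'' once $\beta$ has passed the last marked top-level cell of $X$ (detected by preprocessing $X$ with $M_{end}$ to plant a $\star\star$). The resulting component $1$, with its subtrees, is a marking $X'$ whose marked top-level cells are exactly $0^\frown 0,\dots,0^\frown(\gamma-1)$ for an ordinal $\gamma$, for which $\beta\mapsto decode_{X'}(0^\frown\beta)$ is injective, $Decode(X')=x$, and hence $|\gamma|=|x|$.

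To assemble $M_{cwo}$, I would compose (Composition Lemma) the preprocessing with the deduplication machine, make a spare copy of $X'$ so that it is available in two components, apply $M_{\alpha_C}$ to one copy — which returns $\gamma_C$, since $\gamma$ is precisely the top-level width of $X'$ — and then apply $M_{pairing}$ to the pair $(\gamma_C,X')$; the hypothesis of the Pairing Lemma is met because $\gamma_C$ and $X'$ have the same least blank top-level cell, namely $\gamma$. The third output component is a Turing name for a one-to-one map, and because component $0$ runs through $\gamma$ in its natural order while component $1$ runs through $x$ hitting each element exactly once, this map is in fact the desired bijection $f\colon\gamma\to x$. Finally, erase the auxiliary components with a copy of $M_{erase}$ adapted (via jump instructions) to those components, move $f$ into component $0$, and halt at the $0$-th root; the output $X_{cwo}$ is then a Turing name for $f$, i.e. a well ordering of $x$ by the ordinal $\gamma$. (If one instead wants the output to be the induced order relation on $x$, one further traverse-style pass over pairs of top-level cells of $X'$ in the lexicographic order, emitting the Kuratowski pair $(decode(X'_{0^\frown\xi}),decode(X'_{0^\frown\zeta}))$ via $M_{opair}$ whenever $\xi<\zeta$, produces it.)

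The main obstacle is checking that the deduplication loop behaves correctly at limit stages of the loop, and this is where canonical ordinal names earn their keep: one verifies that $\lim_{\beta'\to\beta}(\beta')_C=\beta_C$ and $\lim_{\delta'\to\delta}(\delta')_C=\delta_C$ pointwise, and that the partial output in component $1$ is nondecreasing so that its pointwise limit at a limit stage is exactly the union of the earlier partial outputs; together these say the loop-carried state converges to the state we intend, so the While Loop Lemma applies and the limit is never undefined. The remaining points — routine but fiddly — are re-establishing the unique-mark preconditions of the Erase-Below and Subtree-Copy Lemmas at the start of each iteration, shuttling data between components, and pinning down that $M_{pairing}$ really returns a surjection and not merely an injection in this configuration.
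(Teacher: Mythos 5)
Your proof is correct and, at its core, takes the same route as the paper: run $M_{\alpha_C}$ to obtain a canonical ordinal name matched to the top-level width of the code, then apply $M_{pairing}$ to produce the third component naming the bijection. The one genuine addition is your canonicalize-and-deduplicate pass making the top-level enumeration $\beta\mapsto decode_X(0^\frown\beta)$ injective before pairing; the paper's two-line proof silently assumes this, and without it $M_{pairing}$ would pair the ordinal against an enumeration of $x$ with repetitions and so deliver only a surjection onto $x$ rather than a well ordering, so your extra step is a worthwhile repair rather than a detour.
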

\begin{proof}
We begin by using $M_{\alpha_C}$ to produce a marking $Y=(Y_0,Y_1)$ with two components, $Y_0=X$
and $Y_1=\alpha_C$ for some ordinal $\alpha$ with the same cardinality as $x$. We then apply the machine $M_{pairing}$
from the pairing lemma to $Y$ to get a third component, $Y_2$ such that $Decode(Y_2)$ is a well ordering of $x$ by the ordinal
$\alpha$. 
\end{proof}

\subsection{The Turing Computable Class Functions {\rm TUR}}

\begin{definition}
Let $x$ be a set. A well ordering of $x$ by an ordinal $\alpha$ is a bijection $f$ between $x$ and $\alpha$.
Similarly, if $\bar{x}$ is an $n$-tuple of sets, then a well ordering of $\bar{x}$ by ordinals $\bar{f}$ is a finite sequence
of well orderings $(f_0,\ldots,f_{n-1})$, with each $f_i$ a well ordering of $x_i$ by some ordinal $\alpha_i$. 
We say $\bar{f}$ is a well ordering of the transitive closure of $\bar{x}$ if for each $i<n$,
$f_i$ is a well ordering of the transitive closure of $x_i$. 
\end{definition}

\begin{definition}
If $\bar{x}$ is a $n$-tuple of sets and $\bar{f}$ is a well ordering of transitive closure of $\bar{x}$, then we define the function
$CCodeBy(\bar{f},\bar{x})=\bar{z}$ we mean $\bar{z}$ is the canonical well formed  marking representing $\bar{x}$ as
induced by $\bar{f}$.  We define the relation $CCode(\bar{x},\bar{z})$, read $\bar{z}$ is a code for $\bar{x}$,
if and only if for some $\bar{f}$, $CCodeBy(\bar{f},\bar{x})=\bar{z}$.
\end{definition}

\begin{definition}
If $M$ is a set Turing machine, then we define the partial class function $F_M$ with ${\rm dom}\,F_M\subseteq V^n$ as follows.
If $\bar{x}$ is an $n$-tuple of sets, then we define
$$
F_M(\bar{x})=y
$$
if and only if for every $\bar{z}$ such that $CCode(\bar{x},\bar{z})$ and $w$ such that $M(\bar{z})=w$, 
$$DeCode(w)=y.$$
\end{definition}

\begin{definition}
{\rm TUR} is the collection of all partial class functions of the form $F_M$ where $M$ is a set Turing machine. Elements
of {\rm TUR} are said to be Turing computable. A relation is said to be Turing computable it is characteristic function is
Turing computable. 
\end{definition}

\begin{theorem}
The relations $x\in y$ and $x=y$ are  Turing computable. 
\end{theorem}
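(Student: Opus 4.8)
The plan is to reduce both relations to \emph{literal} comparison of tape markings after one global canonicalization. The obstruction to a naive approach is that two basic codes may decode to the same set while being literally distinct (for instance, the two codes of $2=\{\emptyset,\{\emptyset\}\}$ obtained by placing a code of $\emptyset$ and a code of $\{\emptyset\}$ at $0^{\frown}0$ and $0^{\frown}1$ in either order), and canonicalizing the two input components \emph{separately} does not repair this, since $M_C$ merges subtrees only \emph{within a single component}. So the two separately canonicalized codes of $x$ and $y$ can still be literally distinct even when $x=y$. The key idea is therefore to bundle $x$ and $y$ into one component, canonicalize once, and compare the resulting subtrees literally.

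Given a well formed input $Z=(Z_0,Z_1)$ coding $(x,y)$ (with only $1$-marks, as produced by $CCodeBy$), I would first build, using the Subtree Copy Lemma together with the copy and erase machines, a single-component marking $W$ whose root is $0$ and which has exactly two children: $W_{0^{\frown}0}$ a copy of $Z_0$ and $W_{0^{\frown}1}$ a copy of $Z_1$. Then $Decode(W)=\{x,y\}$ — a legitimate, possibly redundant, basic code even when $x=y$, since basic codes permit repeated children and forbid only \emph{gaps}. Next I would apply $M_C$ to get $\tilde W=M_C(W)$. Because $M_C$ only overwrites a subtree by a decode-equivalent copy and never relocates or empties a child of the root, in $\tilde W$ the position $0^{\frown}0$ still codes $x$, the position $0^{\frown}1$ still codes $y$, and for every $\gamma$ the position $0^{\frown}1^{\frown}\gamma$ codes an element of $y$; moreover, by the Canonicalization Lemma, any two positions of $\tilde W$ decoding to the same set carry literally identical induced markings.

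Finally, using the Subtree Copy Lemma I would extract $\tilde W_{0^{\frown}0}$ into a fresh component $A$ (as its root) and $\tilde W_{0^{\frown}1}$ into a fresh component $B$, and then: for $x=y$, apply $M_{=}$ to $(A,B)$; for $x\in y$, apply $M_{\exists=}$ of Lemma~\ref{existsequals} to $(A,B)$. By the canonicalization property, $A$ and $B$ are literally equal iff $Decode(A)=Decode(B)$, i.e.\ iff $x=y$; and $A$ equals $B_\gamma$ literally for some $\gamma$ iff $x$ equals the $\gamma$-th element of $y$ for some $\gamma$, i.e.\ iff $x\in y$ (the case $x=y$ then correctly yields $x\notin y$ by Foundation). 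Composing all these machines with the Composition Lemma, relocating the Boolean output to the first component, and erasing the scratch components with $M_{erase}$ produces, in each case, a machine $M$ computing a Boolean; since its output marking is precisely the code of $0$ or of $1$, the induced class function $F_M$ is the characteristic function of the relation, so $x=y$ and $x\in y$ are Turing computable.

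The Turing-machine programming in the extraction and bundling steps is routine given the toolkit; the point that needs genuine care is verifying that $M_C$ leaves the two distinguished root-children in place still coding $x$ and $y$, and that every step reads only the tree structure of the input, so that $F_M$ does not depend on the well ordering of $trcl(x,y)$ used to form the input code. That invariance, together with the correctness of the literal comparisons via the canonicalization property, is the heart of the argument.
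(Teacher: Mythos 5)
Your proposal is correct and follows essentially the same route as the paper's proof: form a single code for the pair $\{x,y\}$ with $x$ placed at $0^{\frown}0$ and $y$ at $0^{\frown}1$, canonicalize once with $M_C$, and then reduce $x=y$ and $x\in y$ to literal comparison of the resulting subtrees via $M_{=}$ and $M_{\exists=}$. Your additional observations --- that canonicalizing the two inputs separately would not suffice, and that one must check $M_C$ leaves the two root-children in place still decoding to $x$ and $y$ --- are correct refinements of details the paper leaves implicit.
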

\begin{proof}
This follows almost immediately from the Canonicalization Lemma. Given two sets $x$ and $y$ and corresponding encodings
$X$ and $Y$, form a coding $Z$ for the pair $\{x,y\}$ by letting $Z_{0^\frown 0}=X$ and $Z_{0^\frown 1}=Y$. Let $Z^{\prime}$
be the canonicalization of $Z$ with components $X^{\prime}$ and $Y^{\prime}$. Then $x\in y$ if and only if there is an ordinal $\alpha$
such that $Y^{\prime}_{\alpha}=X^{\prime}$. Similarly for the $x=y$ relation. 
\end{proof}

\section{The Recursive Class Functions REC}

\begin{definition}
The initial class functions consist of the following functions.
\begin{enumerate}
\item $F(x)=0$.
\item  $P_{n,i}(x_1,\ldots,x_n)=x_i$ for $1\leq i\leq n<\omega$.
\item $F(x,y)=x\cup\{y\}$.
\item The class function $C(x,y,u,v)$ where
\[
C(x,y,u,v)=
\begin{cases}
u & \text{ if $x\in y$},\\
v & \text{otherwise.}\\
\end{cases}
\]
\end{enumerate}
\end{definition}

\begin{definition}
Let $x$ be a set. We denote the set of well orderings of $x$ by an ordinal
as $woo(x)$. 
\end{definition}

\begin{definition}
The basic class operations consist of the following.
\begin{enumerate}
\item Composition:- If $G_1$, $G_2$, and $H$ are class functions of $n+1$, $n+m+1$, and $m$ variables respectively,
then the composition of $G_1$ with $H$ and the composition of $G_2$ with $H$ yields the class functions
$$
F_1(x_1,\ldots,x_m,y_1,\ldots,y_n)=G_1(H(x_1,\ldots,x_m),_1,\ldots,y_n)
$$
and
$$
F_2(x_1,\ldots,x_m,y_1,\ldots,y_n)=G_2(x_1,\ldots,x_m,H(x_1,\ldots,x_m),y_1,\ldots,y_n)
.$$
\item Recursion: - If $G$ is a class function of $n+2$ variables, then recursion on $G$ yields the class function
$$
F(x_1,\ldots,x_n,z)=G(\cup\{F(x_1,\ldots,x_n,u)\mid u\in z\},x_1,\ldots,x_n,z).
$$
\item $\mu$-operator: - If $G$ is a class function of $n+1$ variables then the $\mu$-operator applied to
$G$ yields the function $F=\mu G$ where
$$
F(x_1,\ldots,x_n)=\alpha
$$
if and only if $G(x_1,\ldots,x_n,\alpha)=0$, and for all $\beta<\alpha$, $G(x_1,\ldots,x_n,\beta)$ is defined
and $G(x_1,\ldots,x_n,\beta)\neq 0$. Otherwise $F(x_1,\ldots,x_n)=\mu G(x_1,\ldots,x_n)$ is not defined. 
\item Random Well Orderings by Ordinals:- If $G$ is a class function on $2n$ variables such that for all
$x_1,\ldots,x_n,y\in V$, and $\bar{f}=(f_1,\ldots,f_n)$,
$$
\forall \bar{f}(\bigwedge_{i\leq n}f_i\in woo(x_i)\ \rightarrow\  G(x_1,\ldots,x_n,f_1,\ldots,f_n)=y)
$$
if and only if
$$
\exists \bar{f}(\bigwedge_{i\leq n}f_i\in woo(x_i)\ \wedge\  G(x_1,\ldots,x_n,f_1,\ldots,f_n)=y)
$$
then random well orderings by ordinals applied to $G$ yields the class function $F(x_1,\ldots,x_n)$
where
$$
F(x_1,\ldots,x_n)=y
$$
if and only if
$$
\exists \bar{f}(\bigwedge_{i\leq n}f_i\in woo(x_i)\ \wedge\  G(x_1,\ldots,x_n,f_1,\ldots,f_n)=y)
.$$
\end{enumerate}
\end{definition}

\begin{definition}
The primitive recursive set functions, denoted pREC, are the smallest collection of class functions, containing
all the initial class functions and closed under the operations of Composition and Recursion. The min recursive set
functions, denoted minREC, are the smallest collection of class functions, containing the initial class functions and
closed under the operations of Composition, Recursion, and the $\mu$-operator. Finally, REC, is the class of recursive
set functions, which are the smallest collection of class functions, containing the initial class functions and closed under
the operations of Composition, Recursion, the $\mu$-operator, and Random Well Orderings by Ordinals. 
\end{definition}

The primitive recursive set functions were introduced by Jensen and Karp in \cite{JK}. 

\begin{definition}
Let $R(x_1,\ldots,x_n)$ be a relation on $V^n$. We say $R(x_1,\ldots,x_n)$ is set recursive if its 
characteristic function is. 
\end{definition}

\begin{definition}
A formula in the language of set theory is called $\Delta_0$ if all its quantifiers are restricted, i.e., occur in the form
$\forall x(x\in y\ \rightarrow \ldots)$ or $\exists x(x\in y\ \rightarrow\ldots)$. A formula $\phi(x)$ is $\Sigma_1$ if it has the form
$$
\exists y\psi(x,y)
$$
where $\psi(x,y)$ is a $\Delta_0$ formula. A formula $\phi(x)$ is $\Pi_1$ if it has the form
$$
\forall y\psi(x,y)
$$
where $\psi(x,y)$ is a $\Delta_0$ formula. A formula is $\Delta_1$ if it is equivalent in $V$ to be a $\Sigma_1$ and
a $\Pi_1$ formula. 
\end{definition}

\begin{remark}
All $\Delta_0$ functions are class computable and all computable class functions are $\Delta_1$. In fact, if
$F$ is class computable, then there is a canonical $\Sigma_1$ formula $\sigma_F$ and a canonical $\Pi_1$
formula $\pi_{F}$ both which define $F$. If $V=L$, since the canonical well ordering of $L$ is class computable,
then the computable class functions and the $\Delta_1$ functions coincide.
\end{remark}

Let $H(\kappa)$ denote the sets of hereditary cardinality less than $\kappa$.  Then set recursive functions behave
properly on sets in $H(\kappa)$. 

\begin{lemma}
Let $\kappa$ be an uncountable cardinal and $F$ a computable class function. Then for each $x_0,\ldots,x_{n-1}\in H(\kappa)$,
$$
F(x_0,\ldots,x_{n-1})\in H(\kappa).
$$
\end{lemma}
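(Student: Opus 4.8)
The plan is to bypass the machine model entirely and argue from the Remark that every computable class function $F$ is $\Delta_1$, together with a standard Löwenheim--Skolem and Mostowski--collapse reflection argument. Let $\sigma_F(\bar u,w)$ be the canonical $\Sigma_1$ formula and $\pi_F(\bar u,w)$ the canonical $\Pi_1$ formula defining the graph of $F$, fixed so that $\sigma_F\leftrightarrow\pi_F$ is provable over a sufficiently strong finite fragment $T$ of $\mathrm{ZFC}$; then the graph of $F$ is absolute between $V$ and any transitive model of $T$ with the relevant parameters. We may assume $\bar x=(x_0,\dots,x_{n-1})\in{\rm dom}\,F$, since otherwise the assertion is vacuous, and we set $y=F(\bar x)$.

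First I would pick a regular cardinal $\theta$ large enough that $\bar x,y\in H(\theta)$, that $H(\theta)\models T$, and that the set witnessing the existential quantifier of $\sigma_F(\bar x,y)$ lies in $H(\theta)$ (such $\theta$ exists by reflection). By the chosen absoluteness of $\sigma_F$ we then have $H(\theta)\models\sigma_F(\bar x,y)$, and $y$ is the unique $w\in H(\theta)$ with $H(\theta)\models\sigma_F(\bar x,w)$, so $y$ is definable in $H(\theta)$ from the parameter $\bar x$.

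Next I would take an elementary submodel $N\prec H(\theta)$ with $trcl(\{\bar x\})\subseteq N$: since $\bar x\in H(\kappa)$ we have $|trcl(\{\bar x\})|<\kappa$, and because $\kappa$ is uncountable we may take $|N|=\max(\aleph_0,|trcl(\{\bar x\})|)<\kappa$. By definability and elementarity, $y\in N$. Let $\pi\colon N\cong\bar N$ be the transitive collapse. Since $trcl(\{\bar x\})$ is a transitive subset of $N$, $\pi$ is the identity on it, so $\pi(\bar x)=\bar x$. From $N\models\sigma_F(\bar x,y)$ and the fact that $\pi$ is an isomorphism we get $\bar N\models\sigma_F(\bar x,\pi(y))$; as $\bar N$ is transitive and $\sigma_F$ is $\Sigma_1$, this lifts to $V\models\sigma_F(\bar x,\pi(y))$. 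Since $\sigma_F$ defines the graph of the partial function $F$ and $V\models\sigma_F(\bar x,y)$, we conclude $\pi(y)=y$. Hence $trcl(\{y\})\subseteq\bar N$, so $|trcl(\{y\})|\le|\bar N|=|N|<\kappa$, i.e. $y\in H(\kappa)$, as required. (Note that regularity of $\kappa$ is not used; only that $\kappa$ is uncountable, which is what lets $|N|$ stay below $\kappa$.)

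The main obstacle is the bookkeeping around absoluteness: one must verify that the canonical $\Sigma_1$ and $\Pi_1$ definitions supplied by the Remark really are provably equivalent over a fragment $T$ that $H(\theta)$ and the collapse $\bar N$ both satisfy, so that truth of $\sigma_F(\bar x,-)$ transfers downward from $V$ to $H(\theta)$ to $N$ and upward from $\bar N$ to $V$. Everything else is routine. One could instead attempt a structural induction over the generation of $F$ from the initial functions under composition, recursion, the $\mu$-operator, and random well orderings; but the $\mu$-operator clause is awkward, since it yields an ordinal that must be shown to lie below $\kappa$, whereas the reflection argument above treats all the operations uniformly and sidesteps that difficulty.
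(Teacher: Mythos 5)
Your argument is correct and is essentially the paper's own proof: both fix the canonical $\Sigma_1$ definition $\sigma_F$, reflect it to a set $N$ of size below $\kappa$ containing the transitive closures of the inputs, pass to the transitive collapse (which fixes $\bar x$), and use upward $\Sigma_1$ absoluteness plus uniqueness of the value of $F$ to conclude the collapsed witness equals $F(\bar x)$ and lies in $H(\kappa)$. Your version merely spells out the reflection step via $H(\theta)$ and an elementary submodel, which the paper leaves implicit.
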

\begin{proof}
Let $x_0,\ldots,x_{n-1}\in H(\kappa)$ and let $\sigma_F$ be the canonical $\Sigma_1$ formula defining $F$. 
Let $F(x_0,\ldots,x_{n-1})=x_n$. Let $N$ be a set of cardinality less than $\kappa$ such that for each $i\in n$, 
$x_i\in N$, and $trcl\, x_i\subseteq N$, and $N$ reflects $\sigma_F(v_0,\ldots,v_n)$. Let $M$ be the transitive
collapse of $N$. By reflection, 
$$
N\models \sigma_F(x_0,\ldots,x_{n-1},x_n).
$$
Since $M$ is isomorphic to $N$ by a map taking $x_i$ to $x_i$ for $i\in n$, there is some $u\in M$, such that
$$
M\models \sigma_F(x_0,\ldots,x_n,u)
$$
with $u\in H(\kappa)$ since $M\subseteq H(\kappa)$. By the upward absoluteness of $\Sigma_1$ formulas,
$$
V\models \sigma_F(x_0,\ldots,x_{n-1},u)
$$
and since $\sigma_F(v_0,\ldots,v_{n-1},v_n)$ defines $F$, $u$ must be equal to $x_n$. 
\end{proof}

\section{TUR = REC}

\subsection{TUR$\subseteq $ REC}

\begin{lemma}(Basic Codes are Set Recursive)
The function $BasicCode(w,v)$ which on input a set $v$ and a set $w\in woo(trcl(\{v\})$, outputs
a basic code for $v$, is set recursive. 
\end{lemma}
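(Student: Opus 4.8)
The plan is to produce $BasicCode$ by a single application of the Recursion operator, after first collecting a few elementary set-recursive bookkeeping functions. I would take for granted — each being primitive recursive in the sense of \cite{JK}, or an immediate consequence of the closure properties of pREC established there — ordered pairing and its two projections, binary union, the finite-sequence operations (length, $i$-th entry, and prepending an ordinal to a sequence), and the order-type function $otp$ on well-orders. Writing $<_w$ for the well-ordering of $trcl(\{v\})$ induced by $w$ (so $a<_w b$ iff $w(a)\in w(b)$), I would build, for a set $z$ and a member $u\in z$, the ordinal $pos_w(z,u)$ giving the position of $u$ among the members of $z$ listed in increasing $<_w$-order (this is $otp$ of the corresponding proper initial segment of $(z,<_w)$), together with its inverse $elt_w(z,\beta)$, the $\beta$-th member of $z$ in $<_w$-order. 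Since $w$ well-orders all of $trcl(\{v\})$, its restriction well-orders the members of every $z\in trcl(\{v\})$, so $pos_w$ and $elt_w$ are defined everywhere along the recursion below, and $\beta\mapsto elt_w(z,\beta)$ runs through the members of $z$ along an ordinal — this last point being what makes condition~(2) in the definition of a basic code hold automatically.

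Next I would define, by the Recursion operator with $w$ as a parameter, the class function $F(w,z)$ whose value is the \emph{tagged} code $\{(z,\sigma)\mid\sigma\in B_w(z)\}$, where $B_w(z)$ denotes the basic code of $z$ induced by $w$. The operator hands the step function the value
\[
S:=\bigcup\{F(w,u)\mid u\in z\}=\{(u,\sigma)\mid u\in z,\ \sigma\in B_w(u)\}.
\]
Because every sequence still carries the tag of the set it came from, the step function can recover $B_w(u)=\{\sigma\mid(u,\sigma)\in S\}$ for each $u\in z$, so the information lost by taking the union is harmless. Concretely, the step function outputs
\[
\{(z,\langle\rangle)\}\ \cup\ \{\,(z,\langle pos_w(z,u)\rangle^{\frown}\sigma)\mid(u,\sigma)\in S\,\},
\]
which is assembled from $S$, $w$, $z$ by Composition out of the bookkeeping functions above and is therefore set recursive; hence $F$ is set recursive. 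Finally $BasicCode(w,v):=\{\sigma\mid(v,\sigma)\in F(w,v)\}$ is set recursive by Composition. To finish I would verify correctness: letting $dec_w(\sigma)$ denote the set obtained from $v$ by reading $\sigma=\langle\beta_1,\ldots,\beta_k\rangle$ as the successive instructions ``pass to the $\beta_i$-th $<_w$-member'', a straightforward $\in$-induction along $trcl(\{v\})$ shows that $BasicCode(w,v)$ equals $\{\sigma\mid dec_w(\sigma)\text{ is defined}\}$, that this set is closed under initial segments and has no gaps, and that its $decode$ function agrees with $dec_w$; in particular its $Decode$ is $dec_w(\langle\rangle)=v$, so it is a basic code for $v$.

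I expect the only real obstacle to be the mismatch between the recursion one wants and the shape of the Jensen--Karp Recursion operator, which passes to the step function only the \emph{union} $\bigcup\{F(w,u)\mid u\in z\}$ and so discards which $u\in z$ contributed which sequences (every $B_w(u)$ contains $\langle\rangle$, for instance). Tagging each sequence by the set it originates from is precisely what repairs this, at the cost of a little bookkeeping: one must check that from the tagged union each individual $B_w(u)$ is recoverable set-recursively, and that re-prefixing a sequence by $pos_w(z,u)$ and re-tagging it by $z$ are set recursive. Both reduce to the closure of pREC, hence of REC, under $\Delta_0$-separation and under forming images of sets along set-recursive functions, as established in \cite{JK}. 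Everything else — the elementary sequence and order-type operations, and the closing $\in$-induction checking that the object built really is a basic code for $v$ — is routine.
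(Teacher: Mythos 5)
Your proposal is correct, but it takes a genuinely different route from the paper's. The paper does not apply the Recursion operator to $z$ at all: it defines an auxiliary function $f(\alpha,w,trcl\,\{v\})$ by recursion on an ordinal stage $\alpha$, adding at each successor stage one new pair $(u,\rho)$ --- $u$ the $w$-least element of $trcl(\{v\})$ not yet assigned a node and reachable by an $\in$-chain through already-assigned elements, $\rho$ the least unused address below an already-assigned parent --- and then uses the $\mu$-operator to locate the fixed point $\zeta$ of this iteration, the code being ${\rm ran}\,f(\zeta,w,trcl\,\{v\})$. You instead run a single $\in$-recursion on $z$ with $w$ as a parameter, using the tagging device to survive the loss of information caused by the union in the Recursion scheme. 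Each approach buys something. Yours avoids the $\mu$-operator entirely, so it actually places $BasicCode$ in pREC, a stronger conclusion than the lemma requires; and because each occurrence of a member gets its own subtree, the verification that $Decode$ of the output is $v$ is the immediate $\in$-induction you sketch. The paper's construction is more economical --- condition (a) of its successor step forces each element of $trcl(\{v\})$ to receive exactly one node --- but that economy leaves an additional point to check, namely that a set occurring as a member of several elements of $trcl(\{v\})$ is still correctly reflected by $decode_X$ at every occurrence, which your duplicated subtrees handle automatically. The one claim in your write-up to state more carefully is that $otp$ on arbitrary well-orders is primitive recursive: recursion along a general well-order is not directly an instance of the $\in$-Recursion operator. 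This is harmless here, since $w\in woo(trcl(\{v\}))$ is by definition a bijection onto an ordinal, so $pos_w(z,u)$ is just the order type (Mostowski collapse) of the set of ordinals $\{w(a)\mid a\in z\ \wedge\ w(a)\in w(u)\}$, which is a plain $\in$-recursion; restricted to that case all of your bookkeeping functions are available and the rest of the argument goes through.
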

\begin{proof}
First note that the function that takes a set to its transitive closure is set recursive as
$$
trcl\,x=x\cup\bigcup\{trcl\, y\mid y\in x\}.
$$
We next define an auxiliary set recursive function $f$ which will be used in the proof. An element of the range of $f$ will be
a subset of  $trcl(\{v\})\times ^{<\omega}ON$ representing a function. By induction on $\alpha$ we
define $f$ by letting for a set $v$, and a well ordering by an ordinal $w$ of $trcl\, \{v\}$, $f(\alpha,w,trcl\,\{v\})$
be defined by
\begin{enumerate}
\item $f(0,w,trcl\,\{v\})=\{(v,\langle\rangle)\}$
\item For $\alpha$ a limit ordinal,
$$
f(\alpha,w,trcl\,\{v\})=\bigcup_{\beta<\alpha}\,f(\beta,w,trcl\,\{v\})
$$
\item Let 
$$
f(\alpha+1,w,trcl\,\{v\})=f(\alpha,w,trcl\,\{v\})\cup \{(u,\rho)\}
$$
where $\rho$ is a finite sequence of ordinals and $u$ is the least element of $trcl\,\{v\}$ in the sense of $w$ such that
\begin{enumerate}
\item $u\not\in {\rm{dom}}\, f(\alpha,w,trcl\,\{v\})$
\item There is a finite sequence $v_0,\ldots,v_n$ such that
\begin{enumerate}
\item $v_0=v$
\item $v_n=u$
\item $v_n\in v_{n-1}\in\ldots\in v_0$
\item For all $i<n$, $v_i\in {\rm{dom}}\,f(\alpha,w,trcl\,\{v\})$.
\end{enumerate}
\item $length(\rho)=n$
\item $\rho\restriction n\in {\rm{ran}}\,f(\alpha,w,trcl\,\{v\})$
\item $\rho(n)$ is the least ordinal $\gamma$ such that $\rho\restriction n^\frown\gamma\not\in {\rm ran }f(\alpha,w,trcl\,\{v\})$.
\end{enumerate}
\end{enumerate}

\noindent If $\zeta$ is the least ordinal such that $f(\zeta,w,trcl\,\{v\})=f(\zeta+1,w,trcl\,\{v\})$, i.e.
$$
\zeta=\mu\alpha\left(f(\alpha,w,trcl\,\{v\})=f(\alpha+1,w,trcl\,\{v\})\right),
$$
then
$$
BasicCode(w,trcl\,\{v\})={\rm{ran}}\,f(\zeta,w,trcl\,\{v\}).
$$
\end{proof}

Following from the set recursiveness of the BasicCode function, it follows that the code function $Code(x_1,\ldots,x_n,w_1,\ldots,w_n)$ 
where $x_i\in woo(w_i)$, is set recursive since
$$
Code(x_1,\ldots,x_n,w_1,\ldots,w_n)=(BasicCode(x_1,w_1),\ldots,BasicCode(x_n,w_n))
.$$

\begin{lemma}(Decode Function Lemma)
The Decode function is set recursive. 
\end{lemma}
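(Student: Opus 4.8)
The plan is to mirror the proof that $BasicCode$ is set recursive. The essential difficulty is that $decode_X$ is defined by recursion on the internal rank function $\rho_{(X,\vartriangleleft)}$ of the tree $X$, and this is \emph{not} an instance of the Recursion operation available to us, which is $\in$-recursion: the subtrees $X_{\langle\alpha\rangle}$ on which one would like to recurse are not members of $X$, nor $\in$-below it. So I would not try to apply Recursion to $Decode$ directly. Instead I would reconstruct the decoding in ordinal-indexed stages --- using Recursion with an ordinal as the recursion variable and the basic code as a parameter --- and then read off the answer with the $\mu$-operator, exactly as was done for $BasicCode$.

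Before that I would record the routine bookkeeping: the predicates ``$X$ is a basic code'' and ``$\alpha$ is an ordinal / a limit / a successor'', sequence concatenation $(\nu,\beta)\mapsto\nu^\frown\beta$, the operations $\mathrm{dom}(s)$, $s(\nu)$ and one-point extension on a set of ordered pairs regarded as a function, and restricted replacement $\{h(\nu)\mid\nu\in X\wedge P(\nu)\}$ for set-recursive $h,P$, are all set recursive --- they are $\Delta_0$ with parameters, or are built from the initial functions by Composition and Recursion much as $trcl$ and $\mathrm{rank}$ were in the previous lemma --- hence lie in pREC. I would also use that set equality is set recursive ($x=y$ iff $\forall u\in x\,(u\in y)\wedge\forall u\in y\,(u\in x)$, which reduces to restricted replacement together with the initial function $C$).

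Next I would define, by Recursion with parameter $X$ and recursion variable an ordinal $\alpha$, an approximation $g(\alpha,X)$ intended to be the function with domain $\{\nu\in X\mid\rho_{(X,\vartriangleleft)}(\nu)<\alpha\}$ sending $\nu\mapsto decode_X(\nu)$: put $g(0,X)=\emptyset$, $g(\lambda,X)=\bigcup_{\beta<\lambda}g(\beta,X)$ for limit $\lambda$, and
\[
g(\gamma+1,X)=g(\gamma,X)\cup\bigl\{\bigl(\nu,\{g(\gamma,X)(\nu^\frown\beta)\mid\nu^\frown\beta\in X\}\bigr)\mid\nu\in X\wedge\forall\beta(\nu^\frown\beta\in X\rightarrow\nu^\frown\beta\in\mathrm{dom}\,g(\gamma,X))\bigr\}.
\]
The Recursion scheme hands the stage map $\bigcup_{\beta<\alpha}g(\beta,X)$, which (as $g$ is non-decreasing) equals $g(\gamma,X)$ when $\alpha=\gamma+1$ and equals the intended value when $\alpha$ is a limit, so the stage transition is a definition by cases on the set-recursive predicates above, built from set-recursive operations, and therefore lies in pREC. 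I would then verify by induction on $\alpha$ that $g(\alpha,X)$ has the intended meaning (the successor clause reproduces the defining equation for $decode_X$, since the $y$ with $\nu\vartriangleleft y$ and $\mathrm{len}(y)=\mathrm{len}(\nu)+1$ are exactly the $\nu^\frown\beta\in X$), that $\alpha\mapsto g(\alpha,X)$ is non-decreasing, and that it becomes constant once $g(\alpha,X)=g(\alpha+1,X)$, with domain then equal to all of $X$ (a node of minimal $\rho_{(X,\vartriangleleft)}$-rank missing from the domain would have all its children in the domain and hence be adjoined at the next stage).

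Finally I would set $\zeta(X)=\mu\alpha\,(g(\alpha,X)=g(\alpha+1,X))$ --- defined since $X$ is a set, and obtained by the $\mu$-operator from the set-recursive test ``$g(\alpha,X)\ne g(\alpha+1,X)$'' --- and then, for a basic code $X$, where $\langle\rangle\in X=\mathrm{dom}\,g(\zeta(X),X)$, take $DecodeAux(X)=g(\zeta(X),X)(\langle\rangle)=decode_X(\langle\rangle)=Decode(X)$. This $DecodeAux$ is set recursive and agrees with $Decode$ on all basic codes; composing with the conditional on the set-recursive predicate ``$X$ is a basic code'' yields a set-recursive function equal to $Decode$ on its domain, and applying it componentwise handles $Decode$ on tuples as well. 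I expect the only step needing genuine (if still routine) care to be the verification that the single stage transition is set recursive; everything else is the ordinal-stage-plus-$\mu$ pattern already established for $BasicCode$.
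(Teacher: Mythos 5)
Your proposal is correct and follows essentially the same route as the paper: an ordinal-indexed approximation $g(\alpha,X)$ built by Recursion (adding a node once all its children are decoded, taking unions at limits), a $\mu$-operator search for the fixed point, and evaluation at $\langle\rangle$. The only differences are cosmetic (the paper seeds stage $0$ with the leaf nodes rather than $\emptyset$, and you spell out more of the bookkeeping about why ordinal-stage recursion is an instance of $\in$-recursion).
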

\begin{proof}
We first define an auxiliary set recursive function $g$ which has in its domain pairs of the form $(\alpha,tr)$ with $\alpha$ an ordinal
and $tr$ be a well founded subset of $^{<\omega}ON$, and has as its  range, functions from finite sequences of ordinals to sets.
For $tr$ a well founded subset of $^{<\omega}ON$, 
\begin{enumerate}
\item Let $g(0,tr)=\{(\nu,\emptyset)\mid \nu\in tr\ \wedge\ \neg\exists \rho(\rho\in tr\ \wedge\ \nu\vartriangleleft\rho)$
\item If $\alpha$ is a limit ordinal, then
$$
g(\alpha,tr)=\bigcup_{\beta<\alpha}g(\beta,tr).
$$
\item $g(\alpha+1,tr)=g(\alpha,tr)\cup$
$$
\bigcup\big\{(\nu,s_{\nu})\mid \nu\not\in {\rm{dom}}\, g(\alpha,tr)\ \wedge\ \forall\rho\big((\rho\in tr\ \wedge\ \rho\restriction length(\eta)=\nu)\Rightarrow
\rho\in {\rm{dom}}\,g(\alpha,tr)\big)\big\}
$$
where 
$$
s_\nu=\bigcup\big\{\{g(\alpha,tr)(\rho)\}\mid \rho\in tr\ \wedge\ \nu\vartriangleleft\rho\ \wedge\ \ length(\rho)=length(\nu)+1\big\}
.$$
\end{enumerate}
Let
$$
\gamma=\mu\alpha(g(\alpha,tr)=g(\alpha+1,tr)).
$$
Then
$$Decode(tr)=g(\gamma,tr)(\langle\rangle).$$
\end{proof}

\begin{theorem}
TUR$\  \subseteq \ $REC.
\end{theorem}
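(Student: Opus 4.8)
The plan is to show that for every set Turing machine $M$ the partial class function $F_M$ lies in REC, by simulating the transfinite run of $M$ with recursive set functions and then peeling off the encoding layer with the Random Well Orderings by Ordinals operator. Since $M$ is only ever run on well formed markings, whose nonzero parts are sets, I first fix a set-sized bookkeeping: represent a marking $X$ by $\lceil X\rceil=\{(\eta,k)\mid X(\eta)=k\neq 0\}$ and a configuration by the set $(\alpha,\nu,\lceil X\rceil,\ell)$. The one-step transition map $Step_M$ of the fixed finite machine $M$ — read the mark under the head, do a finite table lookup in $M$, overwrite one cell, and move the head by one of the seven move rules — is built from the initial functions using only finite case distinctions, modification of a set of ordered pairs at a single point, truncation and one-step extension of finite sequences, and ordinal successor and predecessor ($\beta\mapsto\beta\cup\{\beta\}$, $\beta+1\mapsto\bigcup(\beta+1)$); hence $Step_M$ is primitive recursive. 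The delicate ingredient is the limit map: from a history $\langle C_\gamma\mid\gamma<\lambda\rangle$ at a limit $\lambda$ it must produce $\lceil X_\lambda\rceil$ as the pointwise limit — which exists iff, for each cell in the set $\bigcup_{\gamma<\lambda}\mathrm{dom}\,\lceil X_\gamma\rceil$, no two marks from the finite alphabet occur cofinally, a condition decidable recursively from the history — together with $\nu_\lambda=\underline{\lim}_{\,\gamma\to\lambda}\nu_\gamma$ (an explicit recursive function of the history) and $\ell_\lambda$ the least state occurring cofinally; if the pointwise limit fails, the map returns a distinguished ``undefined'' token.

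Next I would assemble the run of $M$ by transfinite recursion on the time ordinal, as prescribed in Definition~\ref{configurations}. Using the Recursion operator together with the standard device of recursing on the history function $\widehat{Run}_M(\bar z,\alpha)=\langle C_\gamma\mid\gamma\leq\alpha\rangle$ (so that the union appearing in the Recursion scheme, $\bigcup\{\widehat{Run}_M(\bar z,\beta)\mid\beta\in\alpha\}$, is exactly the prior history $\langle C_\gamma\mid\gamma<\alpha\rangle$), I obtain a function recursive in the code $\bar z$ and the ordinal $\alpha$ that returns the initial configuration at $0$, applies $Step_M$ at successors, applies the limit map at limits, and freezes — propagating the ``undefined'' token — once the halt state $H$ has occurred or a limit has failed. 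Applying the $\mu$-operator to the function that is $0$ exactly when $C_\alpha$ is defined and in state $H$, $1$ when $C_\alpha$ is defined and not in state $H$, and undefined when the history already carries the ``undefined'' token, returns $\alpha^*$ when $M$ halts and is undefined when $M$ runs forever or breaks down at a limit. Gating the marking extracted from $C_{\alpha^*}$ through a second auxiliary $\mu$-search that diverges unless the halting head position is the $0$-th root then yields a recursive partial function $\bar z\mapsto\lceil M(\bar z)\rceil$ agreeing with $M(\bar z)$ precisely where the latter is defined.

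Finally I would remove the encoding. By the Basic Codes are Set Recursive Lemma the map $(g,t)\mapsto BasicCode(g,t)$ is recursive, and by the Decode Function Lemma so is $Decode$, so
$$
G(t_1,\dots,t_n,g_1,\dots,g_n)=Decode\big(M\big(BasicCode(g_1,t_1),\dots,BasicCode(g_n,t_n)\big)\big)
$$
is recursive. As $\bar g$ ranges over $\prod_i woo(t_i)$ with $t_i=trcl(\{x_i\})$, the tuple $(BasicCode(g_i,t_i))_i$ ranges over all codes of $\bar x=(x_1,\dots,x_n)$, and the very definition of $F_M$ — that $Decode(M(\bar z))$ does not depend on the choice of code $\bar z$ — is exactly the hypothesis required to apply the Random Well Orderings by Ordinals operator to $G$. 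Applying it produces a recursive $\Phi(t_1,\dots,t_n)$, and then $F_M(\bar x)=\Phi(trcl(\{x_1\}),\dots,trcl(\{x_n\}))$ by composition with the recursive function $x\mapsto trcl(\{x\})$, so $F_M\in\mathrm{REC}$.

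I expect the main obstacle to be the limit-step bookkeeping inside the transfinite recursion: verifying that ``the pointwise limit of markings exists'' and that the $\underline{\lim}$ of head positions and least cofinal state are genuinely recursive functions of the history, and that the freezing/undefined-token mechanics make the $\mu$-operator detect divergence correctly. The conceptual crux, by contrast, is simply recognizing that the Random Well Orderings by Ordinals operator is precisely calibrated to discharge the ``choice of encoding'' built into the definition of $F_M$, while everything between the code and the decoded output is handled by an essentially routine, though lengthy, simulation argument.
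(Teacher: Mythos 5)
Your proposal is correct and follows essentially the same route as the paper: encode the inputs via the Basic Codes lemma, simulate the transfinite sequence of configurations by set recursion, locate the halting time, apply $Decode$, and discharge the dependence on the choice of encoding with the Random Well Orderings by Ordinals operator. You simply make explicit (the history-function device for the Recursion scheme, the limit-step bookkeeping, the $\mu$-search for $\alpha^*$) what the paper compresses into the assertion that the transfinite sequence of configurations is set recursive.
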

\begin{proof}
Let $F(v_0,\ldots,v_{n-1})\in \rm{TUR}$ and let $M$ be a set Turing machine witnessing this fact. Let $x_0,\ldots,x_{n-1}\in V^n$
and let $w_i\in woo(x_i)$. Let $X_i=BasicCode(w_i,x_i)$ and let $X=(X_0,\ldots,X_{n-1})$. By transfinite induction on $\alpha$
we define the $\alpha$-th configuration of the machine $M$ on input $X$. The $\alpha$-th configuration consists of a
marking of the tape $X(\alpha)$, a position of the Turing machine head $p(\alpha)$, and a state $s(\alpha)$. 
We define for $\alpha\leq \alpha^*$ where $\alpha^*$ is the least ordinal such that $s(\alpha)=halt$ or 
configurations past the $\alpha^*$-th are not defined as in definition \ref{configurations}.
Without loss of generality $X(\alpha^*)$ has a single component and we can define 
$$F(x_0,\ldots,x_{n-1})=Decode(X(\alpha^*)).$$
By assumption on $F$, $F(x_0,\ldots,x_{n-1})$ and is independent of code $X_i$ for $x_i$ and so does not depend on the choice of 
the well ordering by ordinals $w_i$ of $x_i$. Since the encoding and decoding functions are set recursive, as is the transfinite
sequence of configurations, and as the answer is independent of the choice of the $w_i$, by closure under the random well ordering by ordinals clause 
we have that $F$ is set recursive. 
\end{proof}

\subsection{REC$\,\subseteq\ $TUR}

To prove that REC$\ \subseteq\ $TUR it is enough that the initial functions are in TUR, and that TUR is closed under 
composition, definition by recursion, closed under the $\mu$-operator, and random well orderings by ordinals.

\begin{lemma}
TUR contains all the initial set recursive functions.
\end{lemma}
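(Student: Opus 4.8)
The plan is to produce, for each of the four families of initial functions, a set Turing machine $M$ with $F_M$ equal to that function, assembled from the machines ($M_{end}$, $M_{erase}$, $M_{copy}$) and the constructions (Composition Lemma, If Then Else Lemma, Subtree Copy Lemma) already available. Each initial function is total, so it suffices that the machine we build halts on every well formed code and that its output decodes to the right set; we need not worry that the output is itself a canonical code, since the value of $F_M$ is read off via $Decode$.

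The constant $F(x)=0$ and the projections $P_{n,i}$ are routine. For $F(x)=0$, compose $M_{erase}$ (which already erases the first component and leaves the head at the root) with a tiny machine that writes a $1$ at the root and halts; the output is $0_C$, which decodes to $\emptyset$. For a projection, first erase every component except the one holding the selected argument (a composition of generalized erase machines), then use the generalized $M_{copy}$ to copy that component onto component $0$ and erase the original copy; if the selected argument already lies in component $0$, only the other erasures are needed. The Composition Lemma assembles each of these into a single machine, clearly total on well formed inputs.

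The substantive case is adjunction, $F(x,y)=x\cup\{y\}$. Given a well formed input $(X_0,X_1)$ coding $(x,y)$, the idea is to leave $X_0$ in place in component $0$ and graft a fresh copy of all of $X_1$ beneath a new immediate successor of the root. Concretely: preprocess with an $M_{end}$-style machine to locate the least ordinal $\alpha$ with $X_0(0^\frown\alpha)=0$ (by condition (2) of a basic code this $\alpha$ is exactly the number of children of the root, so the root's children are precisely the cells $0^\frown\beta$ for $\beta<\alpha$); then, using the Subtree Copy machinery applied symmetrically (copying from component $1$ into component $0$, with the source rooted at the component root $1^\frown\langle\rangle$ and the destination at the cell $0^\frown\alpha$, handling unique-mark bookkeeping along the way), copy all of $X_1$, including its root mark, onto the subtree hanging at $0^\frown\alpha$; finally erase component $1$, move the head to the root, and halt. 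By the $Decode$ recursion the new node $0^\frown\alpha$ decodes to $decode_{X_1}(\langle\rangle)=y$, the other children of the root still decode to the elements of $x$, the choice of $\alpha$ keeps the basic-code conditions intact, and the degenerate cases $y=\emptyset$ (nothing to copy, so $0^\frown\alpha$ becomes a leaf decoding to $\emptyset$) and $y\in x$ (a harmless duplicate child) are handled automatically; hence the output is a well formed code decoding to $x\cup\{y\}$. I expect this to be the main obstacle, being the only case that synthesizes a genuinely new tree by merging two given ones: the Subtree Copy Lemma does the real work, but one must be careful about the insertion index, about running that lemma with the source at a component root rather than an internal node, and about checking well formedness of the spliced result.

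For the conditional $C(x,y,u,v)$ we invoke the theorem that $x\in y$ is Turing computable. Let $M_{\in}$ be the associated two-component Boolean machine; let $M_B$ be the composition of the machine that erases components $2$ and $3$ with $M_{\in}$, so that $M_B$ computes on a four-component input the Boolean ``$x\in y$''. Let $M_u$ erase components $0,1,3$ and shift component $2$ onto component $0$ (so $M_u$ outputs a code for $u$), and let $M_v$ do the same with component $3$. The If Then Else Lemma applied to $M_v$, $M_u$, $M_B$ then yields a machine $N$ with $N(X)=M_u(X)$ when $x\in y$ and $N(X)=M_v(X)$ otherwise, i.e.\ $F_N=C$. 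The one point needing a word is that the If Then Else Lemma is stated with $M_1,M_2$ preserving the number of components whereas $M_u,M_v$ drop from four components to one; this is cosmetic, since one may pad the selectors' outputs with empty components, or simply rerun the save-input-and-branch construction underlying that lemma without the component-preservation hypothesis, which is needed only to iterate inside a while loop and plays no role here. Combining the four cases proves the lemma.
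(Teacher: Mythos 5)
Your proposal is correct and follows essentially the same route as the paper: it handles the conditional $C(x,y,u,v)$ by reducing to a Boolean $\in$-decision machine (resting on the Canonicalization Lemma and $M_{\exists=}$) and then selecting and copying $u$ or $v$ into component $0$, which is exactly the paper's argument. For the first three initial functions the paper merely says they are "covered by the machines of Section~2 or variations of them," so your explicit grafting construction for $x\cup\{y\}$ via the Subtree Copy Lemma supplies detail the paper omits rather than a different approach.
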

\begin{proof}
That the first three initial functions are TUR is covered by the set Turing machines defined in Section 2 or variations of them.
The proof that $C(x,y,u,v)$ is in TUR follows from the results in Section 2, in particular the canonicalization lemma. It is enough to prove that there is a 
Turing machine $M_{x\in y}$ that outputs a Boolean for deciding if $x\in y$, since from $M_{x\in y}$ we can copy either
the representative for $u$ or $v$ to a separate component, erase all the other components, and then copy the representative of
$u$ or $v$ to the first component,(component-$0$). Let $X$ be a representative for $x$ and $Y$ a representative for $y$. 
Form in component-$2$, a representative $Z$ for $\{X,Y\}$, and then form its canonicalization $\tilde{Z}$. Now $x\in y$ if and only
if for some ordinal $\alpha$, $Decode(\tilde{Z}_{0})=Decode(\tilde{Z}_{1\frown\alpha})$, which is decidable by a set Turing machine
of the form $M_{\exists =}$ from lemma \ref{existsequals}.
\end{proof}

\begin{lemma}
TUR is closed under composition.
\end{lemma}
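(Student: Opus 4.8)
The plan is to build, from set Turing machines $M_H$ and $M_{G_1}$ witnessing $H,G_1\in\mathrm{TUR}$, a machine $N$ with $F_N=F_1$, where $F_1(x_1,\ldots,x_m,y_1,\ldots,y_n)=G_1(H(x_1,\ldots,x_m),y_1,\ldots,y_n)$; the second composition scheme, which additionally retains the $x_i$'s among the arguments of $G_2$, is handled the same way with an extra copying step. On an input code $Z$ with $m+n$ components for $(x_1,\ldots,x_m,y_1,\ldots,y_n)$, $N$ will run in the following stages, each a machine obtained from the already-available tools --- componentwise relabellings of $M_{copy}$ and $M_{erase}$ (via the jump moves $j^+,j^-$) together with the canonicalization machine $M_C$ --- with the stages chained by repeated use of the Composition Lemma: (1) copy every input component into a backup block, say components $m+n,\ldots,2(m+n)-1$, which $M_H$ must be arranged not to disturb (replace $M_H$, if necessary, by a version confining its scratch work to the components it is handed, i.e.\ a single-component bundled form, so that components beyond the backup block are untouched); (2) form a clean $m$-component tape holding $x_1,\ldots,x_m$ in components $0,\ldots,m-1$ and run $M_H$, leaving a code of $H(\bar x)$ in component $0$; (3) clean the marks of that code to $1$'s and apply $M_C$, leave the result in component $0$, restore the backed-up codes of $y_1,\ldots,y_n$ into components $1,\ldots,n$, and erase all other components, so the working tape now holds a code of $(H(\bar x),y_1,\ldots,y_n)$; (4) run $M_{G_1}$ on this $(n+1)$-component tape; (5) erase any leftover components, so the output is a code of $F_1(\bar x,\bar y)$. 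Because the Composition Lemma already propagates partiality, $N(Z)$ will be undefined exactly when $M_H$ or $M_{G_1}$ fails to halt, which is exactly when $F_1(\bar x,\bar y)$ is undefined.

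The point demanding real care is the normalization in stage (3). The machine $M_H$ is only guaranteed to return the correct value when handed one of the distinguished codes (a $\bar z$ with $CCode(\bar x,\bar z)$), and in turn its output $w$, though a well formed code of $H(\bar x)$, need not itself be distinguished; yet $M_{G_1}$ is likewise only promised correct on distinguished codes. Passing $w$ through ``clean the marks, then $M_C$'' yields a marking $\tilde w$ that still decodes to $H(\bar x)$ and additionally has the property that equal subtrees are literally identical, and the claim I would have to establish is that any such marking is in fact $CCodeBy(f,H(\bar x))$ for a suitable well ordering $f$ of $trcl(\{H(\bar x)\})$ --- e.g.\ the well ordering that lists the elements of $trcl(\{H(\bar x)\})$ in the order of the $\prec$-least tape position at which each appears in $\tilde w$. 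Granting this, and noting that the $y_j$-components extracted from $Z$ are already distinguished (being components of the distinguished code $Z$), stage (4) feeds $M_{G_1}$ an input satisfying $CCode$, so $M_{G_1}$ outputs a code of $G_1(H(\bar x),\bar y)=F_1(\bar x,\bar y)$, as required.

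I expect that last identification --- that canonicalizing a well formed code produces one of the designated $CCodeBy$ codes, not just some code of the same set --- to be the main obstacle. Everything else is bookkeeping: the component shuffling of stages (1), (3), (5) is routine once one observes that a literal copy of a basic code is again a basic code and that empty components never violate well formedness, so every intermediate marking stays well formed; and one should also record the normalization that lets an $m$-ary witness be taken to keep all of its head movement and scratch within a fixed block of components determined by $m$, which is what makes the backup in stage (1) safe against the call to $M_H$.
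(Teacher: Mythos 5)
The paper's own proof is a single sentence deferring to the classical composition argument for ordinary Turing machines, so your construction is far more detailed than anything the paper offers; the overall architecture (back up the inputs, run $M_H$, normalize, reassemble, run $M_{G_1}$, chained by the Composition Lemma) is exactly the standard chaining the paper intends. The real value of your write-up is that you isolate the one point where the classical argument does not transfer: $F_M$ is defined only through the behavior of $M$ on distinguished codes (those $\bar z$ with $CCode(\bar x,\bar z)$), the output $w$ of $M_H$ is merely \emph{some} well formed code of $H(\bar x)$, and the hypothesis $G_1\in{\rm TUR}$ places no constraint whatsoever on what $M_{G_1}$ does with a non-distinguished code. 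That is a genuine issue which the paper's one-liner silently skips.

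However, your resolution is precisely the step you leave unproved, and it is not automatic. You need that cleaning the marks of $w$ and applying $M_C$ yields $CCodeBy(f,H(\bar x))$ for some well ordering $f$ of $trcl(\{H(\bar x)\})$. The paper defines $CCodeBy(\bar f,\bar x)$ only as ``the canonical well formed marking induced by $\bar f$'' (made somewhat more concrete by the $BasicCode$ construction in Section 4), and it is not clear that the image of $M_C$ lies in that class: $M_C$ only merges literally-equal subtrees, it does not reindex the children of a node to occupy an initial segment of the ordinals in the order your proposed $f$ would dictate, whereas the $BasicCode$ construction always assigns the least unused ordinal. So you must either prove this identification for the specific $CCodeBy$ the paper has in mind, or --- more cleanly --- strengthen the statement being proved by induction to say that every $F\in{\rm TUR}$ has a witness $M$ with $Decode(M(\bar z))=F(\bar x)$ for \emph{every} well formed code $\bar z$ of $\bar x$, distinguished or not; with that normalization in hand the composition really is the classical argument. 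A second, smaller gap: stage (1) relies on being able to confine a witness's head movement and scratch work to a fixed block of components, but since $j^+$ lets the head wander into arbitrarily high components, this ``bundled form'' normalization also needs justification rather than being assumed.
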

\begin{proof}
Same as the proof of closure under composition for ordinary Turing machines.
\end{proof}

\noindent To prove that TUR is closed under recursion we will need some definitions and lemmas.

\begin{definition}
Let $X$ be a well formed Turing name and $\eta$ a node in $X$ such that $X(\eta)\neq 0$. Then the storage node associated with 
$\eta$, denoted $s_{\eta,X}$ is the node $\eta^\frown\alpha^{**}$ where $\alpha^{**}$ is the least $\alpha$ such that
$X(\eta^\frown\alpha)=0$.
\end{definition}

\noindent We need to show that TUR is closed under recursion by $F$ and $G$ given both $F$ and $G$ are in TUR.
We will have to compute 
$$
F(x_1,\ldots,x_n,z)=G(\cup\{F(x_1,\ldots,x_n,u)\mid u\in z\},x_1,\ldots,x_n,z)
$$
given $\{F(x_1,\ldots,x_n,u)\mid u\in z\}$. 
 First another definition.

\begin{definition}
Let $F(v_1,\ldots,v_n,v)$ be a set function and $X_1,\ldots,X_n$ names and $Z$ a name. For a name $Z$, let  $\alpha^*_Z$ be the least ordinal
such that $Z(\alpha^*)\neq 0$. We say a name $Z^*$ is 
a name for $Z$ adorned by $F$ if for each $\alpha<\alpha^*$, $Z^*_{s_{\alpha,Z}}$ is a name
for $F(v_1,\ldots,v_n,Z_{\alpha})$. 
\end{definition}

\begin{lemma}(Local Recursion Step Lemma)
Let $x_1,\ldots,x_n,z$ be sets with Turing names $X_1,\ldots,X_n,Z$. Let $G$ be in TUR and let $Z^*$ be a name
name for $Z$ adorned by $F$.
Then there is a set Turing machine $M_{s\ell,G}$ which on input $(X_1,\ldots,X_n,Z^*)$ outputs
a name $Z^{**}$ such that $Z^{**}(\alpha^{*}_{Z^{*}})$ is a name for 
$$F(x_1,\ldots,x_n,z)=G(\cup\{F(x_1,\ldots,x_n,u)\mid u\in z\},x_1,\ldots,x_n,z).$$
\end{lemma}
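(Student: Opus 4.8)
The plan is to realize $M_{s\ell,G}$ as a composition of four stages, each assembled from the constructions of Section~2 --- principally the Subtree Copy Lemma, the While Loop Lemma, the Canonical Well Ordering by an Ordinal Lemma, and the machine $M_G$ witnessing $G\in\mathrm{TUR}$. Write $\eta_\alpha$ for the node $\langle\alpha\rangle$, the $\alpha$-th child of the root of the underlying name $Z$, so that for $\alpha<\alpha^*_{Z}$ the adornment $Z^*_{s_{\eta_\alpha,Z}}$ is, by hypothesis, a name for $F(x_1,\dots,x_n,u)$ with $u$ the $\alpha$-th element of $z$. Stage one builds a name $U$ with $Decode(U)=\bigcup\{F(x_1,\dots,x_n,u)\mid u\in z\}$; stage two reconstitutes the argument tuple $(Decode(U),x_1,\dots,x_n,z)$ as a single well formed marking; stage three applies a canonicalizing wrapper around $M_G$; stage four copies the result into the root's storage node of $Z^*$ and cleans up.

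For stage one I run two nested While loops. The outer loop, built around the While Loop Lemma with a unique mark stepping through the children $\eta_\alpha$ of the root of $Z$, handles one $\alpha<\alpha^*_Z$ at a time; the inner loop, with a second unique mark, steps through the children of the root of the adornment $Z^*_{s_{\eta_\alpha,Z}}$; and a third ``next free slot'' unique mark is kept on a fresh component that will carry $U$, whose root is marked at the outset. For each pair $(\alpha,\beta)$ in the induced order I use the Subtree Copy Lemma to copy the subtree of $Z^*$ below the $\beta$-th child of $s_{\eta_\alpha,Z}$ into the slot indicated by the third mark, then advance that mark and the inner mark; when the inner loop for $\alpha$ exits, the outer mark advances. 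No deduplication is needed: an element of $\bigcup\{F(x_1,\dots,x_n,u)\mid u\in z\}$ is precisely a child of the root of some adornment, and $Decode$ is insensitive to repeated subtrees, so the flat concatenation of all such children decodes to the desired union.

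For stage two I recover the unadorned name $Z$ from $Z^*$ by erasing, root-child by root-child, the subtree hanging below each storage node (the Erase Below Unique Mark Lemma iterated via the While Loop Lemma), and I marshal $U,X_1,\dots,X_n,Z$ into components $0,\dots,n+1$ by the obvious copying machines. For stage three, since the names in hand need not be of the form on which $M_G$ is guaranteed to behave, I first pass to a code for the $(n+2)$-tuple of the form $CCodeBy(\bar f,\cdot)$: using the Canonical Well Ordering by an Ordinal Lemma I produce a well ordering by an ordinal of the transitive closure of the tuple, and from it the canonical code it induces, and then I run $M_G$; its output $V$ then satisfies $Decode(V)=G\big(\bigcup\{F(x_1,\dots,x_n,u)\mid u\in z\},x_1,\dots,x_n,z\big)=F(x_1,\dots,x_n,z)$. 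In stage four I copy $V$ into the subtree below the root's storage node $s_{\langle\rangle,Z}$ of $Z^*$ --- that is, the cell $\langle\alpha^*_{Z^*}\rangle$, mirroring exactly the convention by which $Z^*$ already stores the values $F(x_1,\dots,x_n,u)$ at the storage nodes of the children of the root --- then erase all scratch components and return the head to the $0$-th root. The resulting marking $Z^{**}$ agrees with $Z^*$ off the root's storage node, and $Z^{**}(\alpha^*_{Z^*})$ is a name for $F(x_1,\dots,x_n,z)$, as required.

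The step I expect to be the main obstacle is the interface in stage three between the arbitrary names sitting in working memory and the $CCode$-canonical codes on which $M_G$ is contractually required to compute $G$: one must check that a single set Turing machine can, uniformly, manufacture from the assembled tuple a well ordering of its transitive closure and the canonical code this induces, and that $M_G$ run on such a code does return a name decoding to $G$ of the tuple. Past that point, the only genuine work is the bookkeeping of stage one --- keeping the three unique marks synchronized across distinct components while invoking Subtree Copy, and verifying that at the limit stages of both While loops the $\underline{\lim}$ of the head positions lands where the construction expects --- which is routine given the While Loop Lemma but carries essentially all of the low-level detail.
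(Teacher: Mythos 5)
Your proposal is correct and follows essentially the same route as the paper's own (much terser) proof: copy the adornments into a fresh component to assemble a name for $\cup\{F(x_1,\ldots,x_n,u)\mid u\in z\}$, apply $M_G$ to the resulting argument tuple, copy the output onto the root's storage node $s_{\langle\rangle,Z^*}$, and erase the scratch components. Your stage-three canonicalization wrapper addresses a real interface issue (that $M_G$ is only contractually guaranteed on codes of the form $CCodeBy(\bar f,\cdot)$) which the paper's sketch passes over in silence, but this is a refinement of, not a departure from, the same construction.
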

\begin{proof}
Let $M_G$ be a machine for $G$. The machine $M_{s\ell,G}$ copies the adornments by $F$ onto the $n+2$-th component  to create a name for  $$\cup\{F(x_1,\ldots,x_n,u)\mid u\in z\}$$
at the $n+2$-th component.  It then creates a name for
$$
G(\cup\{F(x_1,\ldots,x_n,u)\mid u\in z\},x_1,\ldots,x_n,z)
$$
by applying $M_G$ at the $n+2$-th component before copying this  name onto the storage node
$$s_{\langle\rangle,Z^*}=\alpha^{*}_{Z^{*}}.$$ 
To finish it then erases the $n+2$-th component. 
\end{proof}

\begin{lemma}(Local Recursion Lemma)
Let $F$ and $G$ be in TUR. There is a set Turing machine $M_{\ell,F,G}$ such that if $X_1,\ldots,X_n,Z$ are names
and $\alpha$ is an ordinal such that all nodes $\eta\in Z$ with rank less than $\alpha$ are
adorned with a name for 
$$F(Decode(X_1),\ldots,Decode(X_n),Decode(Z_{\eta})),$$
outputs a name $Z^*$ with the property that for all nodes $\eta\in Z$ with rank less than or equal to $\alpha$ are
adorned with a name for $F(Decode(X_1),\ldots,Decode(X_n),Decode(Z_{\eta}))$.
\end{lemma}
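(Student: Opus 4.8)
The plan is to implement one "level of recursion" as a bounded transfinite loop over the nodes of $Z$ of rank exactly $\alpha$, using the machinery already in place. First I would use a canonical-ordinal style preprocessing (via $M_{\alpha_C}$ and a rank-marking subroutine analogous to the initialization in the Canonicalization Lemma) to locate and flag, with a raised $^\star$ mark, precisely those nodes $\eta \in Z$ whose rank is $\alpha$; by hypothesis every proper $\in$-predecessor of such an $\eta$ (equivalently, every node of the form $\eta^\frown\beta$, which has strictly smaller rank) is already adorned with a name for $F(Decode(X_1),\ldots,Decode(X_n),Decode(Z_{\eta^\frown\beta}))$. Then I would set up a while loop, invoking the While Loop Lemma, whose body does the following for the $\prec$-least as-yet-unprocessed flagged node $\eta$: copy the relevant $X_i$ into scratch components, copy the subtree $Z_\eta$ into another scratch component (Subtree Copy Lemma), and then run $M_{s\ell,G}$ from the Local Recursion Step Lemma on this localized configuration — which produces, at the storage node $s_{\langle\rangle,\cdot}$ of that scratch copy, a name for $F(Decode(X_1),\ldots,Decode(X_n),Decode(Z_\eta))$, since all of $\eta$'s immediate successors are adorned. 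Finally copy that resulting name back onto the storage node $s_{\eta,Z}$ in the main tape, clear the $^\star$ flag on $\eta$, and loop. The Boolean machine controlling the loop checks whether any flagged node remains.

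The output $Z^*$ then agrees with $Z$ except that every node of rank $\le\alpha$ now carries the required adornment: nodes of rank $<\alpha$ were already adorned by hypothesis and are untouched, and nodes of rank exactly $\alpha$ were adorned in the loop. The correctness of each loop iteration is exactly the content of the Local Recursion Step Lemma applied to the localized subtree, and the fact that the loop terminates (so $\alpha^\ast<\infty$ in the While Loop Lemma) follows because the number of flagged nodes of rank $\alpha$ is a set, and each iteration strictly decreases the $\prec$-position of the least flagged node, so the loop runs for at most ordertype-many steps — a set ordinal — and then halts. At limit stages of the loop the configuration behaves correctly because the controlling state is the least state cofinal in the loop body, exactly as in the analysis of $M_{erase}$ and the Canonicalization Lemma.

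The main obstacle I expect is the bookkeeping of storage nodes and scratch components: $M_{s\ell,G}$ as stated operates on a full tape whose $\langle\rangle$-node plays the role of "$z$", so to apply it to the subtree hanging below $\eta$ I must faithfully relocate $Z_\eta$ together with all of its already-present adornments $Z_{s_{\eta^\frown\beta,Z}}$ into a fresh component, invoke $M_{s\ell,G}$ there, and then transplant the single output name back to $s_{\eta,Z}$ without disturbing the adornments of sibling nodes or of $\eta$'s own successors that may be needed again. This is a routine but delicate composition of the Subtree Copy Lemma and the Erase Below Unique Mark Lemma, tracking unique marks for the "current node $\eta$", the "current successor $\eta^\frown\beta$ being gathered", and the scratch workspace simultaneously. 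A secondary point to verify is that flagging nodes of rank exactly $\alpha$ is itself Turing computable given the input ordinal $\alpha$ in canonical form; this reduces to comparing the rank of each node against $\alpha$, which can be done by the same rank-computation subroutine used inside $M_C$.
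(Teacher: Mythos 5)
Your proposal follows essentially the same route as the paper's (very terse) proof: wrap the While Loop Lemma around the Local Recursion Step machine $M_{s\ell,G}$ and iterate over the nodes of $Z$ of rank exactly $\alpha$. Your version supplies considerably more of the bookkeeping (flagging rank-$\alpha$ nodes, localizing via the Subtree Copy Lemma, and the termination argument) that the paper leaves implicit, but the underlying construction is the same.
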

\begin{proof}
We use While Loop Lemma around a loop using the  machine from the Local Recursion Step Lemma $M_{s\ell,G}$,
to all the nodes of Rank $\alpha$ in $Z$.
\end{proof}

\begin{lemma}
TUR is closed under recursion.
\end{lemma}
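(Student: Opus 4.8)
The plan is to build a set Turing machine computing $F(x_1,\ldots,x_n,z) = G(\cup\{F(x_1,\ldots,x_n,u)\mid u\in z\},x_1,\ldots,x_n,z)$ by iterating the Local Recursion Lemma up the rank hierarchy of the name $Z$ for $z$. First I would preprocess: given names $X_1,\ldots,X_n,Z$, canonicalize if convenient and mark on $Z$ an end marker bounding its rank (using a machine analogous to $M_{\alpha_C}$ combined with the rank-delimiting machines from Section 2), so that the machine has a halting criterion. I would then set up an initialization step: for every node $\eta\in Z$ of rank $0$ (i.e. $Decode(Z_\eta)=\emptyset$), adorn $\eta$ with a name for $F(Decode(X_1),\ldots,Decode(X_n),\emptyset)$; since $\cup\{F(\ldots,u)\mid u\in\emptyset\}=\emptyset$, this is just a name for $G(\emptyset,x_1,\ldots,x_n,\emptyset)$, computable by $M_G$ and placed at each rank-$0$ storage node $s_{\eta,Z}$ by a traversal machine in the style of $M_{copy}$ and the Subtree Copy Lemma.

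The core of the construction is a While Loop (by the While Loop Lemma) wrapped around the machine $M_{\ell,F,G}$ from the Local Recursion Lemma. At the start of stage $\alpha$ the invariant is that every node $\eta\in Z$ of rank $<\alpha$ carries, at its storage node, a name for $F(Decode(X_1),\ldots,Decode(X_n),Decode(Z_\eta))$; one application of $M_{\ell,F,G}$ extends this to all nodes of rank $\le\alpha$, so after the loop stabilizes at $\alpha^* = \mathrm{rank}(Z)$ the root $\langle\rangle$ of $Z$ is adorned with a name for $F(x_1,\ldots,x_n,z)$ itself. The Boolean controlling the loop tests whether the machine head has reached the rank end-marker placed during preprocessing; at limit stages the invariant is preserved because a node of rank a limit ordinal $\lambda$ has all its $\in$-predecessors of strictly smaller rank, hence all already adorned — this is exactly what the pointwise $\lim$ on the tape and the least-cofinal-state convention in Definition~\ref{configurations} deliver. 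Finally, a cleanup machine copies the adornment at $s_{\langle\rangle,Z}$ into component-$0$, erases all other components and all adornments, and halts with the head at the root, so the output is a well formed name for $F(x_1,\ldots,x_n,z)$.

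To see that the resulting machine $N$ witnesses $F\in\mathrm{TUR}$, I would check that $Decode(N(\bar z))$ is independent of the chosen codes: the adornment placed at each storage node is $M_G$ applied to names built deterministically from the input names and the lower adornments, and $M_G$ already computes the code-independent function $G$, so by induction on rank every adornment decodes correctly regardless of which canonical codes were supplied; invoking $F\in\mathrm{TUR}$ for the lower-rank values (via the induction hypothesis on the recursion itself) closes the argument. The main obstacle I anticipate is bookkeeping rather than conceptual: correctly managing the storage nodes $s_{\eta,Z}$ so that adorning a node does not collide with the subtree of $Z$ below $\eta$ or with adornments of sibling and descendant nodes, and verifying that the local traversal in $M_{\ell,F,G}$ — which must visit all rank-$\alpha$ nodes, read their children's adornments, form the union name, run $M_G$, and write the result back — can be carried out by the Subtree Copy, Pair, and $M_{copy}$-style machines while leaving the already-computed lower adornments untouched. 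Making the end-marker for $\mathrm{rank}(Z)$ and ensuring the While Loop's Boolean fires exactly at $\alpha^*=\mathrm{rank}(Z)$ is the one other point requiring care, but it is handled by the same rank-computation technique used in the Canonical Ordinal Lemma.
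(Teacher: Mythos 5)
Your proposal is correct and follows essentially the same route as the paper: a While Loop (via the While Loop Lemma) wrapped around the machine $M_{\ell,F,G}$ from the Local Recursion Lemma, iterating up the rank hierarchy of $Z$ until the root's storage node carries a name for $F(x_1,\ldots,x_n,z)$. The additional detail you supply on initialization at rank $0$, the halting criterion, limit stages, and cleanup is consistent with the paper's framework and merely fills in what the paper leaves implicit.
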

\begin{proof}
Let $X_1,\ldots,X_n,Z$ be names. 
By another use of the While Loop Lemma, this time built around a loop using $M_{\ell,F,G}$ from the Local Recursion Lemma,
we can build a name $Z^*$ which is completely adorned for all nodes $\eta$ of $Z$. The value of the adornment of the
root node of $Z^*$ is a name for $F(Decode(X_1),\ldots,Decode(X_n),Decode(Z_{\eta}))$.
\end{proof}

\begin{lemma}
TUR is closed under the $\mu$-operator.
\end{lemma}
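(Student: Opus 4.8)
The plan is to mimic the classical proof that the recursive functions are closed under minimization, using the Canonical Successor Lemma to step through ordinals one at a time and the While Loop Lemma to drive the search. Suppose $G$ is a class function of $n+1$ variables with $G \in \mathrm{TUR}$, witnessed by a set Turing machine $M_G$. We want a machine $M$ computing $F = \mu G$, i.e.\ on input names $X_1,\ldots,X_n$ we must search for the least ordinal $\alpha$ such that $G(x_1,\ldots,x_n,\alpha)=0$, where $x_i = Decode(X_i)$, outputting the canonical name $\alpha_C$, and diverging if no such $\alpha$ exists or if $G$ is undefined at some earlier ordinal.

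First I would set up the loop state. Working with $n+3$ components, components $1$ through $n$ hold (copies of) $X_1,\ldots,X_n$; component $n+1$ holds the current candidate ordinal in canonical form, initialized to $0_C$; component $n+2$ is scratch space on which we assemble the tuple $(X_1,\ldots,X_n, \text{(current }\alpha_C))$ and run $M_G$ (via the Composition Lemma, composing a "copy the first $n{+}1$ relevant components into position" machine with $M_G$); and component $n+3$ holds the Boolean output of the test "is $G(x_1,\ldots,x_n,\alpha) = 0$?". This last test is: run $M_G$ on the assembled tuple, canonicalize its output together with a name for $\emptyset$ (i.e.\ $0_C$) using the Canonicalization Lemma, and check literal equality — this is exactly the $x = y$ Turing-computable relation established earlier. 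The body of the while loop, then, is: assemble-and-test; if the test says "not zero", apply $M_{succ}$ from the Canonical Successor Lemma to component $n+1$ to advance $\alpha_C \mapsto (\alpha{+}1)_C$, clean the scratch components, and continue. I would let $M_B$ be the machine that returns $0$ precisely when the test in component $n+3$ says $G(\cdots,\alpha)=0$, so the While Loop Lemma produces the sequence $\langle X_\alpha \mid \alpha \le \alpha^*\rangle$ that halts at the first $\alpha$ with $G = 0$; a final stage erases all components except component $n+1$ and moves $\alpha_C$ into component $0$ as output.

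The correctness argument then has three cases, matching the definition of $\mu$. If there is a least $\alpha$ with $G(x_1,\ldots,x_n,\alpha)=0$ and $G$ is defined and nonzero for all $\beta < \alpha$, then at stage $\beta$ the loop test correctly reports "nonzero", $M_{succ}$ advances the candidate, and at stage $\alpha$ the test reports "zero" and the loop exits with $\alpha_C$ in component $n+1$; here one must check that at a limit stage $\lambda \le \alpha$ the candidate ordinal name in component $n+1$ has correctly become $\lambda_C$ — this follows because the canonical names $\langle \beta_C \mid \beta < \lambda\rangle$ form an increasing chain under $\prec$ whose pointwise limit of markings is exactly $\lambda_C$, and the head-position $\underline{\lim}$ behaves as in the $M_{copy}$/$M_{erase}$ examples so the machine arrives at a limit stage in the loop-head state and resumes. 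If some $G(x_1,\ldots,x_n,\beta)$ with $\beta$ minimal is undefined, then $M_G$ fails to halt on the assembled tuple, so the composed body machine is undefined at that iteration, the sequence from the While Loop Lemma has $\alpha^* = \infty$, and $M(X_1,\ldots,X_n)$ is correctly undefined. If $G$ is defined and nonzero at every ordinal, the loop never exits, again giving $\alpha^* = \infty$ and divergence. Since $F = F_M$ is independent of the chosen names $X_i$ (the internal canonicalization makes the equality test depend only on the decoded sets and $G \in \mathrm{TUR}$ is name-independent by hypothesis), we get $F \in \mathrm{TUR}$.

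The main obstacle I anticipate is the limit-stage bookkeeping for the candidate ordinal: one must verify carefully that across a limit number of loop iterations the marking in component $n+1$ limits to the canonical name of the limit ordinal (rather than to some garbled or merely equivalent marking), and that the scratch components $n+2, n+3$ are cleared before each iteration so that their pointwise limits at a limit stage are genuinely blank and do not poison the next round. This is the same phenomenon that makes the While Loop Lemma delicate, so I would lean on that lemma's guarantees and only need to add the observation about the $\prec$-increasing chain $\langle \beta_C \mid \beta < \lambda \rangle$ having pointwise limit $\lambda_C$, which is immediate from the recursive definition of $\alpha_C$. Everything else is routine composition of previously constructed machines.
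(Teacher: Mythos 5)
Your proposal is correct and follows essentially the same route as the paper: a While Loop Lemma search that advances a canonical ordinal name $\alpha_C$ stage by stage (via the Canonical Successor Lemma), runs $M_G$ on the assembled tuple at each iteration, and exits when the output equals $0_C$, with divergence otherwise. The paper's own proof is only a brief sketch of this construction, so your additional care about the component layout, the limit-stage behavior of the chain $\langle \beta_C \mid \beta < \lambda\rangle$, and the undefinedness cases is a welcome elaboration rather than a departure.
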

\begin{proof}
Let $G(x_1,\ldots,x_{n},x_{n+1})$ be in TUR, computable by the set Turing machine $M_G$. We define a machine $M$ computing
$$
\mu\alpha(G(x_1,\ldots,x_n,\alpha).
$$
We can assume $M_G$ puts its output in
the $n+2$-th component. Let $X_1,\ldots,X_{n}$ be names for $x_1,\ldots,x_n$. We use the While Loop Lemma.
At iteration $\alpha$ of the loop starting from iteration
$\alpha=0$ we build a canonical representative for $\alpha$, $\alpha_C$. The machine halts the construction if at any point $M_G(X_1,\ldots,X_{n},\alpha_C)=0_C$, and then
output $\alpha_C$ in this case in the first component, erasing the other components. 
\end{proof}

\begin{lemma}
TUR is closed under random well ordering by ordinals.
\end{lemma}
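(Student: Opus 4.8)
The plan is to build $M_F$ by composing the machine $M_{cwo}$ of the Canonical Well Ordering by an Ordinal Lemma with a machine $M_G$ witnessing $G\in{\rm TUR}$. Given a well formed name $(X_1,\ldots,X_n)$ for $(x_1,\ldots,x_n)$, $M_F$ first runs, for each $i$ in turn, a relocated copy of $M_{cwo}$ on a scratch duplicate of the $i$-th component so as to produce a name $W_i$ representing some well ordering $f_i$ of $x_i$ by an ordinal, i.e.\ some $f_i\in woo(x_i)$, and it shuffles $W_i$ into component $n+i$; after these $n$ stages (assembled via the Composition Lemma together with the component-relabelling variants of $M_{copy}$ and $M_{erase}$ mentioned in Section~2) the tape holds a $2n$-component name $(X_1,\ldots,X_n,W_1,\ldots,W_n)$ for $(x_1,\ldots,x_n,f_1,\ldots,f_n)$. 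It then runs $M_G$ on this name to obtain an output $w$, moves $w$ into component $0$, erases the remaining components, and halts.

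For correctness I would invoke the hypothesis on $G$: for every $\bar x$ and every $y$ the statement $\exists\bar f(\bigwedge_{i\leq n} f_i\in woo(x_i)\wedge G(\bar x,\bar f)=y)$ holds iff $\forall\bar f(\bigwedge_{i\leq n} f_i\in woo(x_i)\rightarrow G(\bar x,\bar f)=y)$ holds, and by definition $F(\bar x)=y$ iff the former holds. So if $F(\bar x)=y$, then $G(x_1,\ldots,x_n,f_1,\ldots,f_n)=y$ for the particular $f_i$ named by the $W_i$ constructed above, whence $Decode(w)=y=F(\bar x)$, no matter which well orderings $M_{cwo}$ happened to produce. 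And if $F(\bar x)$ is undefined, the biconditional — which, note, also rules out $G$ being defined but non-constant on admissible tuples — forces $G$ to be undefined on every admissible $\bar f$, in particular on $(f_1,\ldots,f_n)$, so $M_G$ produces no correct output there and $M_F(\bar x)$ is undefined as well; hence $F_{M_F}=F$.

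The one point genuinely requiring care is this last verification that the output is independent of the auxiliary well orderings chosen inside the machine — this is exactly where the invariance clause in the definition of the random-well-ordering operation is indispensable, since without it $F$ would not even be well defined. A secondary, purely technical wrinkle is to guarantee that the names $W_1,\ldots,W_n$ handed to $M_G$ are in a form on which $M_G$ is certified to behave correctly; one can interpose the local canonicalization machine $M_C$ on the relevant components, or simply rely on the fact that the machines of Section~2 act on arbitrary well formed markings. The bookkeeping for the componentwise application of $M_{cwo}$ and for the final component rearrangement is routine and follows the same pattern as the earlier lemmas, so I would not grind through it in detail.
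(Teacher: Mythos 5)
Your proof is correct and follows essentially the same route as the paper's: the paper's (much terser) argument likewise consists of extracting a well ordering $f_i\in woo(x_i)$ of each $x_i$ by an ordinal from the code $X_i$ via the canonical-ordinal/$M_{cwo}$ machinery, feeding the augmented tuple to $M_G$, and letting the invariance clause in the definition of the operation guarantee that the output is independent of which well orderings the machine happened to produce. Your write-up simply makes explicit the component bookkeeping and the defined/undefined case analysis that the paper leaves implicit.
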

\begin{proof}
The main point here is that for a code $X$ for a set $x$, one can modify the set Turing machine that builds a canonical name 
$\alpha_C$ for an
ordinal $\alpha$ with $|\alpha|=|x|$, and simultaneously build a name for a well ordering between $\alpha$ and $x$. Namely,
we can let $\alpha$ be the least ordinal such that $X(0\frown\alpha)=0$. 
\end{proof}

\begin{corollary}
REC$\ \subseteq\ $TUR.
\end{corollary}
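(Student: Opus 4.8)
The plan is to exploit the inductive definition of $\rm{REC}$ directly: by definition $\rm{REC}$ is the \emph{smallest} collection of class functions that contains the initial class functions and is closed under Composition, Recursion, the $\mu$-operator, and Random Well Orderings by Ordinals. Hence it suffices to check that $\rm{TUR}$ is one such collection, i.e.\ that $\rm{TUR}$ contains the initial functions and is closed under all four operations. Each of these five facts has already been isolated as a lemma in this subsection, so the proof of the corollary itself amounts to assembling them and appealing to minimality.

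Concretely, I would proceed as follows. First, invoke the lemma that $\rm{TUR}$ contains all the initial set recursive functions — the constant $0$ function, the projections, $x\cup\{y\}$, and the conditional $C(x,y,u,v)$ — noting that the only nontrivial part there is that the $\in$-relation is Turing computable, which follows from the Canonicalization Lemma via the construction in Theorem on $x\in y$ and $x=y$. Second, invoke closure under Composition, which transfers verbatim from the classical Turing-machine argument together with the Composition Lemma for set Turing machines. Third, invoke closure under Recursion, provided by the chain Local Recursion Step Lemma $\Rightarrow$ Local Recursion Lemma $\Rightarrow$ the closure-under-recursion lemma, all built on the While Loop Lemma and the notion of an adorned name. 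Fourth, invoke closure under the $\mu$-operator, obtained by wrapping $M_G$ in a while-loop that generates the canonical ordinal names $\alpha_C$ in increasing order and halts at the first $\alpha$ with $M_G(\bar X,\alpha_C)=0_C$. Fifth, invoke closure under Random Well Orderings by Ordinals, using the Canonical Well Ordering by an Ordinal Lemma to produce, from a name for $x_i$, a name for a well ordering of $x_i$ by an ordinal, then feeding these into the machine for $G$; the well-definedness hypothesis on $G$ guarantees the answer does not depend on the choice.

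Then I would conclude: since $\rm{TUR}$ contains the initial class functions and is closed under Composition, Recursion, the $\mu$-operator, and Random Well Orderings by Ordinals, and $\rm{REC}$ is the least such collection, we get $\rm{REC}\subseteq\rm{TUR}$.

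The only real content is in the already-proven lemmas, so the corollary has no genuine obstacle of its own; the step I expect to carry the most weight behind the scenes is closure under Recursion, because it is there that one must manage the transfinite bookkeeping of adorning every node of a name $Z$ with a name for the recursive value at that node, correctly handling limit stages through the $\underline{\lim}$ head-position convention and the pointwise limit on markings. Everything else in the corollary's proof is a matter of citation and orchestration.
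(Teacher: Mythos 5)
Your proposal is correct and matches the paper's own argument exactly: the paper states at the head of the subsection that it suffices to show $\mathrm{TUR}$ contains the initial functions and is closed under Composition, Recursion, the $\mu$-operator, and Random Well Orderings by Ordinals, proves each of these as a lemma, and then deduces the corollary from the minimality of $\mathrm{REC}$. Your assembly of the five lemmas and the appeal to minimality is the intended proof.
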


\end{document}